\pgfplotsset{compat=1.9}
\g@addto@macro\@floatboxreset{\centering} 
\DeclareMathOperator{\diag}{diag}
\newtheorem{teo}{{Theorem}}[section]
\newtheorem{cor}[teo]{Corollary}
\newtheorem{lema}[teo]{Lemma}
\newtheorem{prop}[teo]{Proposition}
\theoremstyle{definition}
\newtheorem{defi}[teo]{Definition}
\theoremstyle{remark}
\newtheorem{remark}[teo]{Remark}
\newtheorem{example}[teo]{Example}
\DeclareMathOperator{\Gal}{Gal}
\DeclareMathOperator{\GL}{GL}
\DeclareMathOperator{\id}{id}
\DeclareMathOperator{\End}{End}
\DeclareMathOperator{\Rep}{Rep}
\DeclareMathOperator{\reg}{reg}
\DeclareMathOperator{\M}{Mat}
\DeclareMathOperator{\Hom}{Hom}
\DeclareMathOperator{\op}{op}
\DeclareMathOperator{\tw}{tw}
\DeclareMathOperator{\cogrado}{codeg}
\DeclareMathOperator{\modu}{mod}
\newcommand{\calm}{\ensuremath{\mathcal{M}}}
\newcommand{\caln}{\ensuremath{\mathcal{N}}}
\newcommand{\C}{\mathbb{C}}
\newcommand{\Z}{\mathbb{Z}}
\newcommand{\N}{\mathbb{N}}
\newcommand{\F}{\mathbb{F}}
\newcommand{\A}{\alpha}
\newcommand{\B}{\beta}
\newcommand{\G}{\gamma}
\newcommand{\D}{\delta}
\newcommand{\E}{\varepsilon}
\newcommand{\Ep}{\epsilon}
\newcommand{\Si}{\sigma}
\newcommand{\mcm}{\ensuremath{\mathrm{lcm}}}
\newcommand{\mcd}{\ensuremath{\mathrm{gcd}}}
\newcommand{\ima}{\ensuremath{\mathrm{Im}}}
\newcommand{\spec}{\ensuremath{\mathrm{Spec}}}
\newcommand{\lra}{\ensuremath{\rightarrow}}
\newcommand{\parE}{(\!(\E)\!)}
\newcommand{\parEn}{(\!(\E^\frac{1}{n})\!)}
\newcommand{\parEc}{(\!(\E^\frac{1}{4})\!)}
\newcommand{\lp}{(\!(}
\newcommand{\rp}{)\!)}
\journal{arXiv}
\begin{document}
\begin{frontmatter}{}
\title{A model for the canonical algebras of bimodules type (1, 4) over truncated polynomial rings.}
\author[rtv]{Christof Geiss\fnref{fn1}}
\ead{christof.geiss@im.unam.mx}
\author[rtv]{David Reynoso-Mercado\fnref{fn2}}
\ead{davreynosom@gmail.com}
\fntext[fn1]{Instituto de Matem\'aticas, Universidad Nacional Aut\'onoma de M\'exico, Ciudad Universitaria, 04510 Ciudad M\'exico, M\'exico.}
\fntext[fn2]{\'Algebra, Teor\'ia de N\'umeros y Aplicaciones: ERM (ALTENUA) y \'Algebra UdeA, Instituto de Matem\'aticas, Facultad de Ciencias Exactas y Naturales, Universidad de Antioquia UdeA, Calle 70 No. 52-21, Medell\'in, Colombia.}

\begin{abstract}
Let $\C\parE$ be the field of complex Laurent series. We use Galois descent techniques to show that the simple regular representations of the species of type $(1,\, 4)$ over $\C\parE$ are naturally parametrized by the closed points of $\spec(\C\parE[x])\dot{\cup}\{1,\,2\}$. Moreover, we provide weak normal forms for those representations. We use our representatives of the simple regular representations to describe the canonical algebras associated to the species of type (1, 4) over $\C\parE$. This suggests a model of those algebras in the sense of the work of Geiss, Leclerc and Schr\"oer \cite{GLS16} and \cite{GLS20}. 
\end{abstract}

\begin{keyword}
Galois descent \sep Canonical algebra \sep Quivers with automorphisms \sep regular representations.

%% PACS codes here, in the form: \PACS code \sep code

\MSC[2020] 16G20 \sep 16S35 
%% or \MSC[2008] code \sep code (2000 is the default)

{\bf Corresponding Author:} David Reynoso-Mercado, e-mail: davreynosom@gmail.com
\end{keyword}
\end{frontmatter}{}
\tableofcontents

%\maketitle

\section{Introduction}

Canonical algebras were introduced by Ringel~\cite{Ri90} in the context of tame algebras.
They play a prominent role in representation theory. Happel showed in~\cite{Ha01} that every $\F$-linear hereditary abelian category with certain finiteness conditions  is derived equivalent to the category of representations  of a canonical algebra (defined in terms of a quiver with relations) when $\F$ is an algebraically closed field. Later, Happel and Reiten were able to lift the restriction of $\F$ being an algebraically closed field in \cite{HR02} by working with species and canonical algebras in the  sense of Ringel~\cite{Ri90}. It is worth mentioning that canonical algebras, almost since their introduction, have been closely related to the study of sheaves on weighted projective lines. The case of algebraically closed fields is explored in~\cite{GL89}, while the general case is analyzed in \cite{Ku09}.

Let $\F$ be a field. In \cite{GLS16}, a new class of $\F$-algebras, $H_\F(C,\,D,\,\Omega)$, was introduced. This allows, to some extent, to model representations of species as locally free modules over these algebras. These algebras are defined in terms of quivers with relations and are a generalization of the classical tensor algebras/species associated with modulated graphs.

The aim of this work is to initiate the study of
a similar model, in terms of quivers with relations, for canonical algebras of possibly non-simply laced types.

As we will see in  Section \ref{sec4.1}, following  Ringel's definitions from~\cite{Ri90}, there are two types of canonical algebras to consider: those of bimodule type $(1,\,4)$ and those of bimodule type $(2,\,2)$. For the purposes of this paper, we will only examine the canonical algebras of bimodule type $(1,\,4)$. The canonical algebras of bimodules type $(2,\,2)$  are examined in \cite{GRR23}. Inspired 
by ~\cite{GLS20}, we need to find explicit descriptions for the canonical algebras of this type over the field of complex Laurent series $\C\parE$, and then reduce an integral form of these algebras modulo $\E^k$. To find the above mentioned explicit description, we first need to find normal forms for the simple regular representations of the species $\Lambda$ of type $(1,\,4)$ ($\tilde{A}_{12}$ in the notation of \cite{DR75})
over the field $\C\parE$.
Since this field is  quasi-finite, we can use Galois descent for our purpose.
\medskip

More concretely $\Lambda$ can be described as the
matrix algebra 
\[
\Lambda\coloneqq\left[\begin{array}{cc}K&K\\0&k\end{array}\right], 
\]
where $K$ is defined as $\C\parEc$ and $k$ is defined as $\C\parE$.  

In Section \ref{sec2}, we first recall some
consequences from the fact that $\C\parE$ is a quasi-finite field. In particular, we discuss the
finite extensions $k_n$ and their Galois groups.
This allows us to relate the simple regular representations of $\Lambda$ with simple regular
$k_n$-linear representations of a quiver of type
$\widetilde{\mathsf{D}}_4$. After introducing a
bit more of notation, we provide in 
Theorem~\ref{teorema1.5} explicitly weak normal forms for the simple regular representations of $\Lambda$. For
this purpose in particular, we need to introduce the notion of a~\emph{regular matrix}
for the elements of $\M_{n\times n}(K)$,
see Definition~\ref{def:regular}.

In Section \ref{sec3}, we discuss in more detail
the technique of Galois descent 
and its relation to quivers with automorphisms, following Hubery~\cite{Hu02, Hu04}. As an application, we study the species of bimodule type $(1, 4)$ over our field of Laurent series. This section presents necessary technical tools for proving our main results.

One of our main results is the following, which 
is our main tool for the classification of
the simple regular $\Lambda$-modules (see Theorem \ref{lema1}).
\begin{teo}
For each $n\in \Z_{>0}$ and $R$ a regular matrix:
\begin{enumerate}
\item The representation $M(R)$ of $\Lambda$ is simple regular if and only if $\phi_n(R)\in k_n$ is generic. In this case, $\End_\Lambda(M(R))\cong k_n$. 
%If $R,\,R'\in K[\A_n]_{\reg}$, then $M(R)\cong M(R')$ if and only if $\phi_n(R)=\Si_n^m(\phi_n(R'))$ for some $1\leq m\leq n-1$.

\item The representations $M_{(1)}$  and $M_{(2)}$ are simple regular with $\End_\Lambda(M_{(1)})\cong k_{2}$ and $\End_\Lambda(M_{(2)})\cong k_{4}$. 
\item Each simple regular representation of $\Lambda$ is isomorphic to  one of the types in {\em a)} or {\em b)}.
\end{enumerate}
\end{teo}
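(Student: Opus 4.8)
The plan is to deduce everything from the Galois-descent dictionary set up in Section~\ref{sec3}. After base change along a sufficiently large finite extension $k_N/k$, the species $\Lambda$ becomes Morita equivalent to the path algebra of a quiver of type $\widetilde{\mathsf{D}}_4$ (the four-subspace quiver), on which $\Gal(k_N/k)\cong\Z/N$ acts by fixing the central vertex and cyclically permuting the four outer vertices according to the natural transitive action of $\Gal(K/k)\cong\Z/4$. By Hubery's descent (\cite{Hu02}; see Section~\ref{sec3}), the simple regular $\Lambda$-modules correspond bijectively to the $\Gal(\bar k/k)$-orbits of quasi-simple $\widetilde{\mathsf{D}}_4$-representations over $\bar k$, and the endomorphism ring of a simple regular $\Lambda$-module is the fixed field of the stabilizer of its orbit; since $\Gal(\bar k/k)\cong\hat{\Z}$ is procyclic, an orbit of size $n$ has as stabilizer the unique open subgroup of index $n$, whose fixed field is $k_n$. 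So the theorem reduces to (i) listing the quasi-simple $\widetilde{\mathsf{D}}_4$-modules --- the unique quasi-simple in the homogeneous tube at each point $\lambda$ of $\mathbb{P}^1(\bar k)$ outside the three exceptional points, together with, at each exceptional point, the two quasi-simples whose dimension vectors add up to the null root $\delta$ --- and (ii) computing the $\Gal(\bar k/k)$-action on this list.

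For part~(1), I would base-change the normal-form module $M(R)$ attached to an $n\times n$ regular matrix $R$ (Theorem~\ref{teorema1.5}, Definition~\ref{def:regular}) and read off the resulting $\widetilde{\mathsf{D}}_4$-representation, whose decomposition into quasi-simples is controlled by $\phi_n(R)\in k_n$: this element cuts out an orbit of homogeneous points of $\mathbb{P}^1(\bar k)$ under a \emph{twisted} Frobenius action --- twisted because the cyclic permutation of the outer vertices acts on the modulus by a M\"obius transformation, so one takes orbits for Frobenius composed with that transformation rather than for Frobenius on scalars. Then ``$\phi_n(R)$ is generic'' is precisely the condition that this orbit has size $n$ and avoids the three exceptional points, equivalently that $M(R)\otimes_k\bar k$ is the direct sum of the quasi-simples over a single full orbit, i.e.\ that $M(R)$ is simple regular; and then $\End_\Lambda(M(R))\cong k_n$ by the stabilizer computation above. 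If $\phi_n(R)$ is not generic, either it fails to generate $k_n$ over $k$, so the orbit --- hence $M(R)$ --- breaks up, or it meets an exceptional point, and then $M(R)$ is not simple for that reason; that $M(R)$ is regular in all cases is visible directly from the normal form, its base change having no preprojective or preinjective summand.

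For part~(2), I would identify $M_{(1)}$ and $M_{(2)}$ with the descents of the $\Gal$-orbits living inside the three exceptional tubes. Labelling the outer vertices $1,2,3,4$, the three exceptional points are the three partitions $12|34$, $13|24$, $14|23$ of the legs into pairs, and a quasi-simple in the tube at such a point is supported, on the outer vertices, by one part of the partition. The $4$-cycle fixes $13|24$ and swaps $12|34\leftrightarrow 14|23$; hence the two quasi-simples over the fixed point (outer supports $\{1,3\}$ and $\{2,4\}$) form one $\Gal$-orbit of size $2$, while the four quasi-simples over the swapped pair (outer supports $\{1,2\}$, $\{2,3\}$, $\{3,4\}$, $\{1,4\}$) form one $\Gal$-orbit of size $4$. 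Descending these yields $M_{(1)}$ with $\End_\Lambda(M_{(1)})\cong k_2$ and $M_{(2)}$ with $\End_\Lambda(M_{(2)})\cong k_4$. Part~(3) is then immediate: every simple regular $\Lambda$-module base-changes to a single quasi-simple $\Gal$-orbit, and by (i)--(ii) such an orbit is either one of the twisted Frobenius orbits of homogeneous points, giving some $M(R)$, or one of the two orbits inside the exceptional tubes, giving $M_{(1)}$ or $M_{(2)}$.

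The step I expect to be the main obstacle is the generic case of (ii): pinning down precisely how a regular matrix $R$ produces, after base change, the $\widetilde{\mathsf{D}}_4$-representation whose isomorphism type is recorded by $\phi_n(R)$ --- in particular handling the M\"obius twist induced by the leg permutation --- and then matching ``genericity of $\phi_n(R)$'' with ``full-size orbit avoiding the three exceptional points''. This needs the explicit shape of the representation extracted from the normal form of Theorem~\ref{teorema1.5}; by contrast, the exceptional-tube bookkeeping behind $M_{(1)}$ and $M_{(2)}$ is finite and elementary.
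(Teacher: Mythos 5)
Your proposal follows essentially the same Galois-descent route as the paper (Section~\ref{sec3}): base-change so that $\Lambda$ becomes $k_d\widetilde{\mathsf{D}}_4$, classify ii-simple-regular representations of $\widetilde{\mathsf{D}}_4$ under the order-$4$ quiver automorphism $\G$, and descend back to $\Lambda$. Your bookkeeping for the exceptional tubes also matches the paper exactly: $N_1$ with outer support $\{1,3\}$ generates a $\G$-orbit of size two, and $N_2$ with outer support $\{1,2\}$ generates one of size four, giving $M_{(1)}$ and $M_{(2)}$ by descent.

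That said, your outline has a genuine gap, which your own last paragraph flags but does not close. The ``bijective correspondence'' between simple regular $\Lambda$-modules and Galois orbits of quasi-simples is the thing to be proved, not a ready-made citation. On one side, you must show the descent obstruction vanishes for every orbit: given a candidate $X=\bigoplus_j\,{}^{\G_d^j}N$, you need $f\colon X\to\,{}^{\G_d}X$ satisfying $\,^{\G_d^{d-1}}f\circ\cdots\circ\,^{\G_d}f\circ f=\id_X$. This is the content of the (lengthy, case-split) proposition constructing explicit such $f$'s for the homogeneous families $M_{\frac{1}{2}+s\E^{1/2}}$, of Proposition~\ref{prop2.2} for $N_1,N_2$, and of the Hilbert-90-type lemma on $N_{n,1}(c)=1$ that makes the descent datum essentially unique; that this can always be done is ultimately a consequence of $k$ being quasi-finite, but the paper does it by hand. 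On the other side, tying your ``M\"obius-twisted Frobenius orbit'' story to $\phi_n(R)$ requires the explicit isomorphism $k_d\otimes_k M(R)\cong N_{\phi_n(R)}$ of Proposition~\ref{prop2.8}, built from the matrices $A,B$ and Equation~\eqref{2.6}; it is exactly this computation that shows $M(R)$ is simple regular with $\End_\Lambda(M(R))\cong k_n$ when $\phi_n(R)$ is generic, and conversely. Also note that in the theorem the hypothesis ``$R$ regular'' (conditions (R1),(R2)) is what guarantees avoidance of the exceptional points, while genericity of $\phi_n(R)$ alone controls the orbit size; your phrasing folds the two together. You correctly identify the twist $t\mapsto 1-\Si_d(t)$ as the crux, but naming the obstacle is not surmounting it: what you have is a correct skeleton of the paper's own argument, not a completed alternative.
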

Where $M_{(1)}$ and $M_{(2)}$ are the representations related to the matrices $[1,\,\E^\frac{1}{2}]$ and $[\id_2|\A_2\E^\frac{1}{4}-i\id_2\E^\frac{3}{4}]$, respectively. 

Finally, in Section \ref{sec4}, we present Ringel's definition of canonical algebra and classify all simple regular modules over the species of type (1, 4) over $k$. We also introduce generalized canonical algebras in terms of quivers with relations. In fact, these algebras are truncations of an integral form of Ringel's canonical algebras over a field of Laurent series.  
In other words, we combine Ringel's definition of the species version of canonical algebras with ideas from \cite{GLS16} (used to define $H(C,\,D,\,\Omega)$), see subsection \ref{3.4}.
\begin{example}
    Consider the following quiver
    \begin{center}
\begin{tikzpicture}
[->,>=stealth',shorten >=1pt,auto,node distance=1.5cm,thick,main node/.style=]
  \node[main node] (1) {$2$};   
 \node[main node] (5) [below left of=1] {};
  \node[main node] (3) [left of=5] {$1$};
  \node[main node] (4) [below right of=3] {$3$};
  \node[main node] (6) [ right of=4] {};
  \node[main node] (12) [right  of=6]{$4$};   
\node[main node] (11) [above right  of=12]{$5$};   
\path[every node/.style={font=\sffamily\small}]     
(1) edge node   {$\A_{12}$} (3)
(1) edge [loop above] node              {$\epsilon_2$} (1)
(3) edge [loop above] node              {$\epsilon_1$} (3)
(11) edge [loop above] node              {$\epsilon_5$} (11)
(4) edge [loop below] node              {$\epsilon_3$} (4)
(12) edge [loop below] node              {$\epsilon_4$} (12)
 (11) edge node         {$\A_{25}$} (1)             
  (12) edge node         {$\A_{34}$} (4)             
 (11) edge [bend right=10] node  [left]{$\A_{45}^{(1)}$} (12)        
(11) edge [bend left=10] node     {$\A_{45}^{(2)}$} (12)
 (4) edge node  {$\A_{13}$} (3);
\end{tikzpicture}
\end{center}
with the relations
$$\epsilon_1^4=\epsilon_2^2=\epsilon_3^3=\epsilon_4^3=\epsilon_5=0,$$ 
and 

\begin{minipage}{4cm}
\begin{eqnarray*}
\epsilon_3\A_{34}&=&\A_{34}\epsilon_4,\\\epsilon_4^3\A^{(1)}_{45}&=&\A^{(1)}_{45}\epsilon_5,\\\epsilon_4^3\A^{(2)}_{45}&=&\A^{(2)}_{45}\epsilon_5,\\
\epsilon_1^2\A_{12}&=&-\A_{12}\epsilon_2,
\end{eqnarray*}
\end{minipage}
\begin{minipage}{5cm}
\begin{eqnarray*}
\A_{13}\epsilon_3^2\A_{34}\A_{45}^{(1)}&=&-e^\frac{\pi i}{4}\epsilon_1^3\A_{12}\A_{25},\\\epsilon_1^4\A_{13}&=&\A_{13}\epsilon_3^3,\\\A_{13}\epsilon_3^2\A_{34}\A_{45}^{(2)}&=&\A_{12}\A_{25},\\
\A_{13}\A_{34}\A_{45}^{(1)}&=&-\epsilon_1^2\A_{12}\A_{25},
\end{eqnarray*}
\end{minipage}
\begin{minipage}{4.1cm}
\begin{eqnarray*}
\epsilon_2^2\A_{25}&=&\A_{25}\epsilon_5,\\
\A_{13}\A_{34}\A_{45}^{(2)}&=&0,\\\A_{13}\epsilon_3\A_{34}\A_{45}^{(1)}&=&0,\\\A_{13}\epsilon_3^2\A_{34}\A_{45}^{(1)}&=&0.\end{eqnarray*}
\end{minipage}
 We will see in subsection \ref{3.4} that this quiver with relations is our model of  the canonical algebra associated to $\Lambda$, where the list of simple regular representations consists of $M_{(1)}$ and $M$, while the corresponding branch lengths are 2 and 3, respectively.

Here, the matrix of $M$ is $\left[\begin{array}{cccccc}1&0&0&e^{\frac{\pi i}{4}}\E^\frac{3}{4}&0&\E^\frac{1}{2}\\0&1&0&\E^\frac{3}{2}&e^{\frac{\pi i}{4}}\E^\frac{3}{4}&0\\0&0&1&0&\E^\frac{3}{2}&e^{\frac{\pi i}{4}}\E^\frac{3}{4}\end{array}\right]$. 
\end{example}

For unexplained notation and definitions we refer to \cite{DR75,GLS16,RR85,Ri90}.
%%%%%%%%%%%%%%%%%%%%%%%%%%%%%%%%%%%%%%%%%%%%%%%%%%%%%%%%%%%

\section{A first description of simple regular representations of \texorpdfstring{$\Lambda$}{Λ}}
\label{sec2}
\subsection{ \texorpdfstring{$\C\parE$}{C((ε))} as a quasi-finite field}\label{quasi1.2}
 Recall from \cite[Chapter XIII, Section 2]{S79} that $k=\C\parE$ is a quasi-finite field because $\C$ is an algebraically closed field of characteristic $0$. Thus, every finite dimensional division algebra over $k$ is isomorphic to  $k_n\coloneqq \C\parEn$ for some $n\in\{1,\,2,\,3,\cdots\}=\Z_{>0}$. In particular, all finite dimensional division algebras are commutative, and  the Galois group of $k_n/k$ is the cyclic group $C_n=\langle\Si_n\rangle$ of order $n$. Here, $\Si_n$ acts $k$-linearly on $k_n$ via $\Si_n(\E^\frac{j}{n})=\zeta_n^j\E^\frac{j}{n}$, with $\zeta_n\coloneqq  e^\frac{2\pi i}{n}$.
 
 For $m|n$ we identify $k_m$ with the subfield of $k_n$, which is generated by $(\E^\frac{1}{n})^\frac{n}{m}$. Thus, $\Si_n|_{k_m}=\Si_m$.
 
 We say that $x\in k_n$ is {\em generic} if $|C_n \cdot  x|=n$, or equivalently, $\Pi_{j=0}^{n-1}(y-\Si_n^j(x))\in k[y]$ is an irreducible polynomial. For $0\neq x=\sum_{j\in\Z}x_j\E^\frac{j}{n}\in k_n$, we define $$\cogrado_n(x)\coloneqq \min\{j\in\Z|x_j\neq0\}$$
 and  set $\cogrado_n(0):=\infty$.
 
 \subsection{Representations of the species of type \texorpdfstring{$(1,\,4)$}{(1,4)} over \texorpdfstring{$k$}{k}}\label{sec1.3}
 We abbreviate $K\coloneqq \C\parEc=k[\E^\frac{1}{4}]$. A representation $M$ of  the $_KK_k$ bimodule is of the form $M\coloneqq(K^n,k^m,\varphi_M)$, where $\varphi_M\colon K\otimes_{k}k^{m}\lra K^{n}$ is $K$-linear and  $m,\,n\in \N$, \cite[Section 1]{Ri90}. Let $N$ be the matrix of the map $\varphi_M$.
 
Let $\Rep_k(1,\,4)$ be a category, where the objects are the matrices with entries in $K$. For $N\in \M_{n\times m}(K)$ and $N'\in \M_{n'\times m'}(K)$, the morphisms from $N$ to $N'$ are  given by the $k$-vector space:
 $$\Hom_{(1,\,4)}(N,\,N')\coloneqq \{(f_2,\,f_1)\in\M_{n'\times n}(K)\times\M_{m'\times m}(k)|f_2N=N'f_1\}.$$
 
It is easy to see that $\Rep_k(1,\,4)$ is a $k$-linear abelian category. This category  coincides with the category of bimodule representations from \cite[Section 1]{Ri90}. It is naturally equivalent to $\Lambda\mbox{-}\modu$ for the Euclidean species $\Lambda=\left[\begin{array}{cc}K&K\\0&k\end{array}\right]$ of type $(1,\,4)$ over $k$. In what follows, we will implicitly identify these two categories.
Consider  $N\in\M_{m\times n}(K)$ 
 as a representation in $\Rep_k(1,\,4)$  in the
above sense, we  
 write $\underline{\dim}N=(m,\,n)\in\N_0^2$
 for its \emph{dimension vector}.
 
 Following Dlab-Ringel \cite[Section 3]{DR75}, $N\in\Rep_k(1,\,4)$ is {\em regular} if for each direct summand $N'$ of $N$ we have $\underline{\dim}N'=(m,2m)$ for some $m\in\Z_{>0}$; if it also satisfies that $\End_{(1,\,4)}(N):=\Hom_{(1,\,4)}(N,\,N)$ is a division algebra, i.e. isomorphic to $k_n$ for some $n$ (because $k$ is quasi-finite), we say that it is simple regular.
 The simple regular modules are the simple objects of the abelian category of regular representations of $\Lambda$, see \cite[Section 3]{DR75}.
 
 In this paper we will only consider the quiver represented in Figure \ref{figd4}, which, abusing notation, we will denote by $\widetilde{\mathsf{D}}_4$.
 \begin{figure}
  \centering
\begin{tikzpicture}
[->,>=stealth',shorten >=1pt,auto,node distance=2cm,thick,main node/.style=]
  \node[main node] (1) {$4$};   
  \node[main node] (2) [below right of=1] {$0$};
  \node[main node] (3) [below left of=2] {$3$};
  \node[main node] (4) [above right of=2] {$1$};
  \node[main node] (5) [below right of=2] {$2$};
\path[every node/.style={font=\sffamily\small}]     
(2) edge node   {$\alpha_4$} (1)         
    edge node         {$\alpha_1$} (4)             
  edge node  {$\alpha_3$} (3)              
   edge node         {$\alpha_2$} (5)
; 
\end{tikzpicture}
\caption{The quiver $\widetilde{\mathsf{D}}_4$.}
     \label{figd4}
 \end{figure}

We will see in Section \ref{isokd}  that, for $4|d$, we obtain $k_d\otimes_k \Lambda\cong k_d\widetilde{\mathsf{D}}_4$. The path algebra $k_d\widetilde{\mathsf{D}}_4$ admits a $k$-linear automorphism $\G_d$ of order $d$, which is compatible with the action of $\Si_d$ on $k_d\otimes\Lambda$, see Sections \ref{isokd} and  \ref{Invariantrep} for more details.

Moreover, let $M\in\Rep_k(1,\,4)$ be a simple regular representation and $\End_{(1,\,4)}(M)\cong k_n$. Then, with $d=\mcm(4,\,n)$, we have $k_d\otimes_kM=\bigoplus_{j=0}^{n-1}\,^{\G_d^j}M'$ for a simple regular $k_d$-representation $M'$ of $\widetilde{\mathsf{D}}_4$, with $\End_{k_d\widetilde{\mathsf{D}}_4}(M')\cong k_d$ and $\,^{\G_d^n}M'\cong M'$. % Following \cite{Hu02}, we say that  $M$  is {\em isomorphically invariant representation} (ii-representation) if  $\,^{\G_d}M\cong M$. We say that $M$ is an {\em ii-indecomposable} if it is not isomorphic to the proper direct sum of two or more ii-representations. 
Following \cite[Section 3]{Hu04}, we say that  $M$  is an {\em isomorphically invariant representation} (ii-representation) if  $\,^{\G_d}M\cong M$. We say that $M$ is an {\em ii-indecomposable} if it is not isomorphic to the proper direct sum of two ii-representations. As we will see in Section \ref{Invariantrep}, the $ii$-indecomposable representations of $\widetilde{\mathsf{D}}_4$ are not just parameterized by $\mathbb{P}^1(k)\setminus\{0,\,1,\,\frac{1}{2}\}$. Our classification is a bit more complicated.

\subsection{Notation}\label{mainresult}
For $n\in\Z_{>0}$, we define the matrix $\A_n\in\M_{n\times n}(k)$ by
$$(\A_n)_{ij}=\left\{\begin{array}{ccc}\delta_{i+1,\,j}&\mbox{if}&i<n,\\\delta_{1,\,j}\E&\mbox{if}&i=n,\end{array}\right.$$
i.e., 
$$\A_n=\left[\begin{array}{ccccc}0&1&0&\cdots&0\\ 0&0&1&\cdots&0\\\vdots&\vdots&\vdots&\ddots&\vdots\\0&0&0&\cdots&1\\\E&0&0&\cdots&0\end{array}\right],$$
and we agree that $\A_1=\E$.

It is easy to see that the minimal polynomial of $\A_n$ is $y^n-\E\in k[y]$, and that each element $x\in k_n$ can be written in a unique way as
$$x=\sum_{j=0}^{n-1}x_j\E^\frac{j}{n},$$
for certain $x_j\in k$.
 \begin{lema} \label{lema2} We have an isomorphism of fields $k_n\tilde{\rightarrow}k[\A_n]\subset\M_{n\times n}(k)$ which sends $x=\sum_{j=0}^{n-1}x_j\E^\frac{j}{n}$ to
$$\A(x)\coloneqq \sum_{j=0}^{n-1}x_j\A_n^j.$$
Moreover, there exists $A\in\GL_n(k_n)$ such that for each $x\in k_n$, we have 
$$A^{-1}\A(x)A=\diag(x,\,\Si_n(x),\cdots,\,\Si_n^{n-1}(x)).$$
  \end{lema}
  \begin{proof}Since the minimal polynomial of $\A_n$ is $x^n-\E$, it is easy to see that $k_n\cong k[\A_n]$ with the isomorphism given by $\E^\frac{1}{n}\mapsto\A_n$.
   We define the matrix $A=(a_{ij})$ as 
  $$A=\left[\begin{array}{cccc}1&1&\cdots&1\\\E^\frac{1}{n}&\zeta_n\E^\frac{1}{n}&\cdots&\zeta_n^{n-1}\E^\frac{1}{n}\\
  \vdots&\vdots&\ddots&\vdots\\\E^\frac{n-1}{n}&\zeta_n^{n-1}\E^\frac{n-1}{n}&\cdots&\zeta_n^{(n-1)^2}\E^\frac{n-1}{n}\end{array}\right]\in\M_{n\times n}( k_n)$$ 
 i.e. $a_{ij}=\zeta_n^{(j-1)(i-1)}\E^\frac{i-1}{n},\mbox{ for } 1\leq i,\,j\leq n$. Note that $A$ is the  transpose of a Vandermonde matrix. Hence, $\det(A)=\prod_{1\leq i<j\leq n}\E^\frac{1}{n}(\zeta_n^j-\zeta_n^i)\neq0$, and thus, $A\in\GL_n(k_n)$. It is easy to see that  $A^{-1}=(\frac{y_{ij}}{n\E})$, where $y_{ij}=\zeta_n^{(i-1)(1-j)}\E^\frac{n+1-j}{n}\mbox{ for } 1\leq i,\,j\leq n$. 
 
 Recalling that $(\A_n^l)_{ij}=\left\{\begin{array}{lcl}\D_{i+l,\,j}&\mbox{if}&1\leq i<n+1-l\\\D_{i+l-n,\,j}\E&\mbox{if}&n+1-l\leq i\leq n\end{array}\right.$, it follows that 
 $$(A^{-1}\A_n^l)_{ij}=\left\{\begin{array}{lcl}\frac{1}{n}y_{i\,n-l+j}&\mbox{if}&1\leq j\leq l\\
 &&\\ \frac{1}{n\E}y_{i,\,j-l}&\mbox{if}&l+1\leq j\leq n.\end{array}\right.$$
 Then, for $A^{-1}\A_n^lA$: 
  \begin{eqnarray*}
  (A^{-1}\A_n^lA)_{ij}&=&\frac{1}{n\E}\left(\sum_{r=1}^l\E y_{i\,n-l+r}x_{rj}+\sum_{r=l+1}^n y_{i\,r-l}x_{rj}\right)\\
&=&\frac{1}{n\E}\left(\E \sum_{r=1}^l\zeta_n^{(i-1)(l+1-r)}\E^\frac{l+1-r}{n}\zeta_n^{(r-1)(j-1)}\E^\frac{r-1}{n}\right.\\
&+&\left.\sum_{r=l+1}^n \zeta_n^{(i-1)(n+l+1-r)}\E^\frac{n+l+1-r}{n}\zeta_n^{(r-1)(j-1)}\E^\frac{r-1}{n}\right)\\
&=&\frac{1}{n\E}\left(\E \sum_{r=1}^l\zeta_n^{(i-1)l}\E^\frac{l}{n}\zeta_n^{(r-1)(j-i)}+\sum_{r=l+1}^n \zeta_n^{(i-1)l}\E^\frac{n+l}{n}\zeta_n^{(r-1)(j-i)}\right)\\
&=&\frac{\E^\frac{l}{n}\zeta_n^{(i-1)l}}{n}\left(\sum_{r=1}^n\zeta_n^{(r-1)(j-i)}\right)\\
 &=&\D_{ij}\E^\frac{l}{n}\zeta_n^{(i-1)l}.
\end{eqnarray*}
Therefore, since $A^{-1}\A_n^lA=\diag(\E^\frac{l}{n},\,\E^\frac{l}{n}\zeta_n^{l},\cdots,\,\E^\frac{l}{n}\zeta_n^{(n-1)l})$, it follows that 
$$A^{-1}x(\A_n)A=\diag(x,\,\Si_n(x),\cdots,\,\Si_n^{n-1}(x)).$$
\end{proof}

\begin{defi} \label{def:regular}
Using the notation from Lemma \ref{lema2}, each $R\in K[\A_n]\subset\M_{n\times n}(K)$ can be written in a unique way as
$$R=\A(\nu_0)+\A(\nu_1)\E^\frac{1}{4}+\A(\nu_2)\E^\frac{1}{2}+\A(\nu_3)\E^\frac{3}{4}$$ 
for certain $\nu_0,\,\nu_1,\,\nu_2,\,\nu_3\in k_n$. We say that $R\in K[\A_n]$ is {\em regular} if:
\begin{enumerate}
\item[(R1)] $\nu_1^2-\E\nu_3^2\neq0$.
\item[(R2)] $\Si_n^m(\nu_2)\neq - \frac{(1-i)\E^\frac{3}{4}}{2\E}(\Si_n^m(\nu_1)+i\Si_n^m(\nu_3)\E^\frac{1}{2})$ for $1\leq m\leq n$.%\footnote{ This implies that $R-\Si(R)$ and $R-\Si^2(R)$ are invertible matrices.}.
\end{enumerate}
Then, we define the set $K[\A_n]_{\reg}\coloneqq\{R\in K[\A_n]| R\mbox{ is regular}\}$ and $K[\A_n]_{R1}\coloneqq\{R\in K[\A_n]| R\mbox{ satisfies condition (R1)}\}$.
\end{defi}

%Note, that $K[\A_n]\cong\left\{\begin{array}{lcl}k_{4n}&\mbox{if}&n\equiv 1\,(\modu 2),\\k_{2n}\times k_{2n} &\mbox{if}&n\equiv 2\,(\modu 4),\\k_{n}\times k_{n}\times k_{n}\times k_{n} &\mbox{if}&n\equiv 0\,(\modu 4).\end{array}\right.$

%We will prove this in Section \ref{2.3}.

For each $R\in K[\A_n]$, we  define the representation $M(R)\coloneqq(K^n,\,k^{2n}, \varphi_{M(R)})$, where $\varphi_{M(R)}$ is a $K$-linear map  and the matrix of $\varphi_{M(R)}$ is $[\id_n|R]$. For $X\in k[\A_n]$ be an invertible matrix, we have $M(XR)$ is isomorphic to $M(R)$.

 Finally we introduce the map  
 $\phi_n\colon K[\A_n]_{R1}\rightarrow k_n$,
 which is given by 
$$\A(\nu_0)+\A(\nu_1)\E^\frac{1}{4}+\A(\nu_2)\E^\frac{1}{2}+\A(\nu_3)\E^\frac{3}{4}\mapsto -i\frac{\nu_2^2-\nu_1\nu_3}{\nu_1^2-\nu_3^2\E}.$$
The map was suitably chosen to allow us to prove later on Proposition \ref{prop2.8}.

\subsection{\texorpdfstring{$\widetilde{\mathsf{D}}_4$}{tD4}-homogeneous representations
of \texorpdfstring{$\Lambda$}{Λ}}
Let us introduce for each $n\in\Z_{>0}$
the following two subsets of $k_n$:
$$I_n^{(1)}=\left\{\begin{array}{lcl}k_n\setminus\{\E^\frac{1}{2}\}&\mbox{if}&n\equiv0(\modu 4)\\
k_n&\mbox{if}&n\not\equiv0(\modu 4)\end{array}\right.\mbox{ and } I_n^{(2)}=\left\{\begin{array}{lcl}k_n\setminus\{0\}&\mbox{if}&n\equiv1(\modu 2)\\
\{\E^{-\frac{n-2}{4n}}\C[\![\E^\frac{1}{n}]\!]\}&\mbox{if}&n\equiv2(\modu 4).\end{array}\right.$$

We define $M_n^{(j)}(s)$ as the representation of $\Lambda$ with $M(R_n^{(j)}(s))$ as the defining matrix, where $s\in k_n$, $j\in \{1,\,2\}$ and $R_n^{(j)}(s)$ is a matrix in $K[\A_n]$, see Table \ref{table1}. It is worth mentioning that for the case where $n$ is congruent to 0 modulo 4, we only consider $I_n^{(1)}$.

Now, we can define $\phi_n^{(j)}$ as a map from $I_n^{(j)}$ to $k_n$, which is defined as $s\mapsto\phi_n(R_n^{(j)}(s))$.% for $[\id_n|R_n^{(j)}(s)]$.

An interesting fact is that for any $t\in\C\lp x\rp$, $\sqrt{t}$ belongs to $\C\lp x^\frac{1}{2}\rp$. Specifically, the following lemma describes the form of a square root for any given element t. 
\begin{lema}\label{cuadrado}
 Let $\C\lp x\rp$ be the field of complex Laurent series. Then, for every $t\in\C\lp x\rp$, there exists $s\in\C\lp x\rp$ such that
$$t=\left\{\begin{array}{ll}s^2&\mbox{if  $\cogrado(t)$ even,}\\xs^2&\mbox{if $\cogrado(t)$ odd.}\end{array}\right.$$
\end{lema}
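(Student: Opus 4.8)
The plan is to write $t = \sum_{j \geq \cogrado(t)} t_j x^{j/1}$ with $t_{\cogrado(t)} \neq 0$, and to factor out the lowest-order term. Concretely, set $m = \cogrado(t)$ and write $t = t_m x^m (1 + u)$ where $u = \sum_{j>m} (t_j/t_m) x^{j-m} \in \C\lp x\rp$ has strictly positive codegree, i.e. $u \in x\C[\![x]\!]$. Since $\C$ is algebraically closed, $t_m$ has a square root $c \in \C$ with $c^2 = t_m$. The two cases are then handled by splitting $x^m$ as $(x^{m/2})^2$ when $m$ is even, and as $x \cdot (x^{(m-1)/2})^2$ when $m$ is odd; in either case the ``$x^m$'' part contributes a perfect square in $\C\lp x\rp$ (times $x$ in the odd case).

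The remaining point is that $1 + u$ is a square in $\C[\![x]\!] \subset \C\lp x\rp$. First I would invoke the formal binomial series: define
\[
w \;=\; \sum_{k\geq 0} \binom{1/2}{k} u^k \;=\; 1 + \tfrac{1}{2}u - \tfrac{1}{8}u^2 + \cdots,
\]
which makes sense in $\C[\![x]\!]$ because $u \in x\C[\![x]\!]$, so $u^k \in x^k\C[\![x]\!]$ and the sum is $x$-adically convergent (only finitely many terms contribute to each coefficient). Formally squaring and using the Vandermonde-type identity $\sum_{i+j=k}\binom{1/2}{i}\binom{1/2}{j} = \binom{1}{k} = \delta_{k,0}+\delta_{k,1}$ for $k \le 1$ and $=0$ for $k \ge 2$ gives $w^2 = 1+u$. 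Alternatively, and perhaps more cleanly for the write-up, one can construct $w = 1 + w_1 x + w_2 x^2 + \cdots$ by solving $w^2 = 1+u$ coefficient by coefficient: the coefficient of $x^\ell$ in $w^2$ equals $2w_\ell + (\text{polynomial in } w_1,\dots,w_{\ell-1})$, so $w_\ell$ is determined recursively (division by $2$ is fine in characteristic $0$), matching the coefficient of $x^\ell$ in $u$. This recursion has a unique solution with constant term $1$.

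Putting the pieces together: in the even case, with $m = 2r$, take $s = c\, x^{r}\, w \in \C\lp x\rp$, and then $s^2 = c^2 x^{2r} w^2 = t_m x^m (1+u) = t$. In the odd case, with $m = 2r+1$, take $s = c\, x^{r}\, w$, and then $x s^2 = x \cdot c^2 x^{2r} w^2 = t_m x^{m}(1+u) = t$. The edge case $t = 0$ is trivial (take $s=0$, and $\cogrado(0)=\infty$ may be declared even by convention, or simply excluded since the statement is vacuous there). I do not foresee a genuine obstacle here; the only mildly delicate point is justifying the convergence/well-definedness of the square-root series in $\C[\![x]\!]$, which the coefficient-recursion argument settles without any analytic input, relying only on $\Char \C = 0$ and $\C$ being algebraically closed (the latter used just for $\sqrt{t_m} \in \C$). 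One could also phrase the whole thing as: $\C\lp x\rp^\times / (\C\lp x\rp^\times)^2$ is generated by the image of $x$, since $\C[\![x]\!]^\times \subseteq (\C\lp x\rp^\times)^2$ and $\C\lp x\rp^\times = x^{\Z} \cdot \C[\![x]\!]^\times$.
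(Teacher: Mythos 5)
Your proof is correct and essentially matches the paper's: both construct the square root coefficient by coefficient via a recursion that divides by twice the leading coefficient (the paper works directly with $s_m = \sqrt{t_{n_0}}$, you normalize the unit part to leading coefficient $1$), and both reduce the odd-codegree case to the even one by peeling off a single factor of $x$.
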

\begin{proof}
Let $t=\sum_{j=n_0}^\infty t_jx^j$, where $t_j\in\C$ and $n_0=\cogrado(t)$. On the one hand, for $n_0$ even and $m=n_0/2$, it is easy to see that $s^2=t$, where $s=\sum_{l=m}^\infty s_lx^l\in\C\lp x\rp$, and  $s_l$  is defined recursively with respect to the previous elements as follows:
$$s_{m+j}=\left\{\begin{array}{ll}\sqrt{t_{n_0}}&j=0\\\frac{t_{n_0+j}-\sum_{k=1}^{j-1}s_{m+k}s_{j+m-k}}{2s_{m}}&\mbox{otherwise.}\end{array}\right.$$
On the other hand, for $n_0$ is odd, we have $t=x\sum_{j=n_0-1}^\infty t_{j+1}x^j$. In this case, we can find the corresponding square root of $\sum_{j=n_0-1}^\infty t_{j+1}x^j$. 
\end{proof}
\begin{prop}\label{prop1.4}
Let $n$ be a positive integer. The following statements hold:
\begin{enumerate}
\item $k_n=\ima(\phi_n^{(1)})\dot\cup \,\ima(\phi_n^{(2)})$ for $n\not\equiv 0(\modu 4)$.
\item $k_n\setminus\{0\}=\ima(\phi_n^{(2)})$ for $n\equiv 0(\modu 4)$.
\end{enumerate}
\end{prop}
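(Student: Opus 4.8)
The plan is to analyze the map $\phi_n$ explicitly on the families $R_n^{(j)}(s)$ recorded in Table~\ref{table1}, and to track the codegree (in the relevant variable) of the resulting expression. Recall that for $R = \A(\nu_0) + \A(\nu_1)\E^{1/4} + \A(\nu_2)\E^{1/2} + \A(\nu_3)\E^{3/4}$ one has
$$\phi_n(R) = -i\,\frac{\nu_2^2 - \nu_1\nu_3}{\nu_1^2 - \nu_3^2\E}\,,$$
so in each case we substitute the data $\nu_0,\nu_1,\nu_2,\nu_3 \in k_n$ coming from $R_n^{(j)}(s)$ and simplify. I expect that for $j=1$ the denominator is, up to a nonzero scalar, $\nu_1^2$ with $\nu_3=0$ (or a similar normalization), so that $\phi_n^{(1)}(s)$ becomes a relatively transparent expression in $s$ — plausibly an affine or M\"obius-type function of $s$ — whose image over $s$ ranging in $I_n^{(1)}$ is exactly $k_n$ (resp. $k_n\setminus\{0\}$ in the case $n\equiv 0 \pmod 4$, the omitted value $\E^{1/2}$ from $I_n^{(1)}$ being precisely the preimage of $0$). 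For $j=2$ (only relevant when $n\not\equiv 0\pmod 4$) the family $R_n^{(2)}(s)$ is designed so that $\phi_n^{(2)}$ picks up exactly the elements of $k_n$ \emph{missing} from $\ima(\phi_n^{(1)})$: when $n$ is odd this should be the single point $0$ together with whatever $\phi_n^{(1)}$ cannot reach, and when $n\equiv 2\pmod 4$ the constraint $I_n^{(2)} = \E^{-(n-2)/4}\C[\![\E^{1/n}]\!]$ should correspond to a codegree restriction forced by Lemma~\ref{cuadrado} (square roots only exist after adjusting parity of the codegree).

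Concretely I would proceed as follows. First, write down $\phi_n^{(1)}(s)$ from Table~\ref{table1} and show it is a bijection (or surjection with the expected single fiber over $0$) from $I_n^{(1)}$ onto $k_n$ when $n\not\equiv 0\pmod 4$ and onto $k_n\setminus\{0\}$ when $n\equiv 0\pmod 4$; this proves part (b) immediately and gives the first half of the disjoint union in part (a). Second, compute $\ima(\phi_n^{(1)})$ exactly for $n\not\equiv 0\pmod 4$ and identify its complement in $k_n$; I anticipate this complement is described by a codegree/parity condition (e.g. "$\cogrado_n$ of a certain combination is odd" or "$= 0$"). Third, verify that $\phi_n^{(2)}$ maps $I_n^{(2)}$ bijectively onto exactly this complement — here Lemma~\ref{cuadrado} is the key input, since extracting the square root implicit in the formula for $R_n^{(2)}(s)$ (or in inverting $\phi_n^{(2)}$) requires the codegree of the argument to have the right parity, which is exactly what the definition of $I_n^{(2)}$ encodes. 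Fourth, check disjointness of the two images: since $\ima(\phi_n^{(2)})$ was computed as the set-theoretic complement of $\ima(\phi_n^{(1)})$ inside $k_n$, disjointness and the covering $k_n = \ima(\phi_n^{(1)}) \dot\cup \ima(\phi_n^{(2)})$ follow simultaneously.

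The main obstacle will be the explicit manipulation of $\phi_n$ on the second family $R_n^{(2)}(s)$ for $n\equiv 2\pmod 4$: the Galois action $\Si_n$ interacts nontrivially with the half-integer powers of $\E$, and one must check that the codegree bookkeeping in the numerator $\nu_2^2 - \nu_1\nu_3$ and denominator $\nu_1^2 - \nu_3^2\E$ matches the prescribed range $\E^{-(n-2)/4}\C[\![\E^{1/n}]\!]$ precisely, with no off-by-one error in the exponent $(n-2)/4$. A secondary subtlety is confirming that regularity conditions (R1), (R2) of Definition~\ref{def:regular} are automatically satisfied along both families $R_n^{(j)}(s)$ for $s$ in the respective domains $I_n^{(j)}$, so that $\phi_n^{(j)}$ is genuinely defined on all of $I_n^{(j)}$; I expect this to be a short verification once the entries of $R_n^{(j)}(s)$ are plugged into (R1)–(R2), but it must be done to make the statement meaningful.
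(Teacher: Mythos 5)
Your overall plan --- read off $\phi_n^{(j)}(s)$ from Table~\ref{table1} and use Lemma~\ref{cuadrado} to split $k_n$ by codegree parity --- is indeed the paper's route, so the spirit is right. But the ``First'' step as written would fail: you assert that $\phi_n^{(1)}$ is a surjection (even a bijection) from $I_n^{(1)}$ onto all of $k_n$ when $n\not\equiv 0\,(\modu 4)$. That cannot hold: if $\ima(\phi_n^{(1)})=k_n$, then the disjoint union $k_n=\ima(\phi_n^{(1)})\,\dot\cup\,\ima(\phi_n^{(2)})$ would force $\ima(\phi_n^{(2)})=\emptyset$, contradicting your own ``Second/Third'' steps. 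For $n$ odd the actual picture is that $\phi_n^{(1)}(s)=\E^{1/n}s^2$ and $\phi_n^{(2)}(s)=s^2$ each hit exactly one parity class of $\cogrado_n$ (with $0\in\ima(\phi_n^{(1)})$ while $0\notin I_n^{(2)}$), and Lemma~\ref{cuadrado} gives covering and disjointness in one stroke --- that observation is the paper's entire argument for $n$ odd. For $n\equiv 2\,(\modu 4)$ the paper uses a discriminant criterion, not a bare parity condition on $t$: solving $\phi_n^{(1)}(s)=\tfrac{is}{s^2-\E}=t$ requires $\sqrt{1-4t^2\E}\in k_n$, and the paper shows this fails precisely when $t\in\ima(\phi_n^{(2)})$, while $s=i\tfrac{1+\sqrt{1-4t^2\E}}{2t}$ furnishes a preimage whenever $t\notin\ima(\phi_n^{(2)})$. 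So the relevant parity is that of $\cogrado_n(1-4t^2\E)$, not $\cogrado_n(t)$ --- close to your guess, but not literally it.

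Two smaller slips worth correcting: your expectation that the $j=1$ family has $\nu_3=0$ is backwards --- from Table~\ref{table1}, for $n$ odd it is the $j=2$ family that has $\nu_3=0$, while the $j=1$ family has $\nu_1=0$; and for $n\equiv 0\,(\modu 4)$ the excluded value $\E^{1/2}$ in $I_n^{(1)}$ is the pole of the M\"obius map $s\mapsto -i/(s-\E^{1/2})$, not the preimage of $0$ (indeed $0$ has no preimage, which is exactly why the image is $k_n\setminus\{0\}$). Finally, none of the claimed surjectivities or disjointnesses are actually verified --- they are stated as expectations --- so even setting these errors aside, the write-up is a plan in the paper's spirit rather than a proof.
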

\begin{proof}
\begin{enumerate}
\item We have two cases: $n\equiv1(\modu2)$ and $n\equiv2(\modu4)$. On the one hand, for $n\equiv1(\modu2)$, the result follows from the Lemma \ref{cuadrado}. On the other hand, for $n\equiv2(\modu4)$ and $t\in\ima(\phi_n^{(2)})$, we have $1-4t^2=(\E^\frac{1}{n}s)^2(\E^\frac{1}{2}+\E^\frac{2+n}{n}s^2)$ for some $s\in I_n^{(2)}$. Thus, $\cogrado(s)\geq -\frac{n-2}{4}$. It  follows that $\cogrado(\E^\frac{1}{2})<\cogrado(\E^\frac{2+n}{n}s^2)$. Then, $\cogrado(1-4t^2)$ is odd, and therefore $\sqrt{1-4t^2\E}\notin k_n$. It follows that $\ima(\phi_n^{(1)})$ and $\ima(\phi_n^{(2)})$ are disjoint. Let $t\in k_n$ such that $t\notin\ima(\phi_n^{(2)})$. Then, 
$$s\coloneqq i\frac{1+\sqrt{1-4t^2\E}}{2t}\in k_n$$
and $s$ satisfies $\frac{is}{s^2-\E}=t$. So $k_n=\ima(\phi_n^{(1)})\dot\cup\ima(\phi_n^{(2)})$.
\item Suppose $n\equiv0(\modu4)$. For $t\in k_n\setminus\{0\}$, we can choose $s=\frac{-i+t\E^\frac{1}{2}}{t}\in k_n\setminus\{\E^\frac{1}{2}\}$. Then, $\frac{-i}{s-\E^\frac{1}{2}}=t$, and therefore, $k_n\setminus\{0\}=\ima(\phi_n^{(2)})$.
\end{enumerate}
\end{proof}

\begin{table}[ht]
\centering
\begin{tabular}{|c|c|c|c|}\hline
Parity of $n$&j  & $R_n^{(j)}(s)$ & $\phi_n^{(j)}(s)$\\ \hline
&&&\\
$n\equiv1\,(\modu 2)$&$1$  &$\A(s)\A_n^\frac{n+1}{2}\E^\frac{1}{2}+e^{\frac{1}{4}\pi i}\id_n\E^\frac{3}{4} $& $\E^\frac{1}{n}s^2$\\ &&&\\ \hline &&&\\
$n\equiv1\,(\modu 2)$&$2$  &$e^{\frac{3}{4}\pi i}\id_n\E^\frac{1}{4}+\A(s)\E^\frac{1}{2}$ &$s^2$\\ &&&\\ \hline &&&\\
$n\equiv2\,(\modu 4)$&$1$  &$\A(s)\E^\frac{1}{4}+\id_n\E^\frac{3}{4} $&$i\frac{s}{s^2-\E}$\\ &&&\\ \hline &&&\\
$n\equiv2\,(\modu 4)$&$2$  &$i\A_n^\frac{n}{2}\E^\frac{1}{4}+(1-i)\A(s)\A_n^{\frac{n}{2}+1}\E^\frac{1}{2}+\id_n\E^\frac{3}{4} $& $\frac{1}{2}\E^\frac{-1}{2}+(\E^\frac{1}{n}s)^2$\\ &&&\\ \hline
 &&&\\
$n\equiv0\,(\modu 4)$&$1$  &$\A(s)\E^\frac{1}{4}+\A_n^{\frac{n}{4}}\E^\frac{1}{2}-\id_n\E^\frac{3}{4} $&$ \frac{-i}{s-\E^\frac{1}{2}}$\\ &&&\\ \hline
\end{tabular}
\caption{Ingredients for the description of the homogeneous simple regular representations.}
\label{table1}
\end{table}

We define the representations $M_{(1)}\coloneqq(K,\,k^{2}, \varphi_{M_{(1)}})$ and $M_{(2)}\coloneqq(K^2,\,k^{4}, \varphi_{M_{(2)}})$, where $[1,\,\E^\frac{1}{2}]$ and $[\id_2|\A_2\E^\frac{1}{4}-i\id_2\E^\frac{3}{4}]$ are the matrices of the $K$-linear maps $\varphi_{M_{(1)}}$ and $\varphi_{M_{(2)}}$, respectively. As we will see in Theorem \ref{lema1}, these representations are not isomorphic to any of those in Table \ref{table1}.

\begin{defi}\label{defhomo}
    The simple regular representations of $\Lambda$, which are not isomorphic to $M_{(1)}$ or $M_{(2)}$, are called  $\widetilde{\mathsf{D}}_4$-homogeneous.
\end{defi}

Now, according to the definition of $\widetilde{\mathsf{D}}_4$-homogeneous, we can derive the following result as a consequence of Theorem~\ref{lema1} below and
Proposition~\ref{prop1.4}.

\begin{teo}\label{teorema1.5} Let $M$ be a $\widetilde{\mathsf{D}}_4$-homogeneous simple regular representation of $\Lambda$ with $\underline{\dim}M=(n,\,2n)$. Then, $M$ is isomorphic to a representation of the form $M_n^{(j)}(s)$ with $s\in I_n^{(j)}$ for some $j\in\{1,\,2\}$.
\end{teo}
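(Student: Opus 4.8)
The plan is to reduce the classification of $\widetilde{\mathsf{D}}_4$-homogeneous simple regular representations of $\Lambda$ to the classification (via Galois descent) already set up in Sections~\ref{sec2} and~\ref{sec3}, and then to match the invariant $\phi_n$ against the images of the maps $\phi_n^{(j)}$ computed in Table~\ref{table1}. So let $M$ be $\widetilde{\mathsf{D}}_4$-homogeneous with $\underline{\dim}M=(n,2n)$. By Theorem~\ref{lema1}, since $M\not\cong M_{(1)},M_{(2)}$, $M$ is of the form $M(R)$ for some regular matrix $R\in K[\A_{n'}]_{\reg}$, and $M(R)$ is simple regular exactly when $\phi_{n'}(R)\in k_{n'}$ is generic; moreover $\End_\Lambda(M(R))\cong k_{n'}$, which pins down $n'=n$ (the dimension vector being $(n',2n')$). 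Thus every $\widetilde{\mathsf{D}}_4$-homogeneous simple regular $M$ of dimension $(n,2n)$ is isomorphic to some $M(R)$ with $R\in K[\A_n]_{\reg}$ and $\phi_n(R)$ generic.

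Next I would show that the isomorphism class of $M(R)$ depends only on the invariant $t\coloneqq\phi_n(R)\in k_n$. The natural route is the one already indicated before Proposition~\ref{prop2.8}: $\phi_n$ is built so that $t$ is (up to the normalization chosen there) the eigenvalue-type parameter that governs, after base change to $k_d$ with $d=\mcm(4,n)$, the $\G_d$-orbit of simple regular $k_d$-representations $M'$ of $\widetilde{\mathsf{D}}_4$ into which $k_d\otimes_k M$ decomposes, as recalled in Section~\ref{sec1.3}. Concretely, two regular matrices $R,R'\in K[\A_n]$ give isomorphic $M(R)\cong M(R')$ iff $\phi_n(R)$ and $\phi_n(R')$ lie in the same $C_n$-orbit (equivalently define the same closed point), and under the additional genericity hypothesis the orbit has size $n$; this is the content needed and should follow from the descent dictionary of Section~\ref{sec3} together with Lemma~\ref{1.1}, which diagonalizes $\A(x)$ and so lets one read off the relevant eigenvalues of $R$ over $k_n$.

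With that in hand the proof finishes by a bookkeeping step: I must check that for each generic $t\in k_n$ there is a $j\in\{1,2\}$ and an $s\in I_n^{(j)}$ with $\phi_n^{(j)}(s)=t$, i.e.\ $t\in\ima(\phi_n^{(1)})\dot\cup\ima(\phi_n^{(2)})$ (for $n\not\equiv0\ (\modu 4)$) or $t\in\ima(\phi_n^{(2)})$ (for $n\equiv0\ (\modu 4)$). But this is exactly Proposition~\ref{prop1.4}: its two cases cover all of $k_n$ when $4\nmid n$ and all of $k_n\setminus\{0\}$ when $4\mid n$, and one checks that $0$ is not generic (it is fixed by $C_n$, hence $|C_n\cdot 0|=1<n$ whenever $n>1$, and the $n=1$ case is degenerate and excluded from homogeneity), so the value $t=0$ that is missing in the $4\mid n$ case never arises for a homogeneous simple regular module. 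Finally, since the matrices $R_n^{(j)}(s)$ in Table~\ref{table1} are visibly regular (conditions (R1), (R2) of Definition~\ref{def:regular} hold for $s\in I_n^{(j)}$ — this is precisely why the sets $I_n^{(j)}$ are defined the way they are), $M_n^{(j)}(s)=M(R_n^{(j)}(s))$ is a bona fide representation, and $\phi_n^{(j)}(s)=t$ generic forces $M\cong M_n^{(j)}(s)$ by the isomorphism criterion of the previous paragraph.

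The main obstacle I anticipate is the middle step: establishing cleanly that $\phi_n(R)$ is a \emph{complete} isomorphism invariant for the $M(R)$ with $R$ regular and $\phi_n(R)$ generic. This requires carefully tracking the Galois-descent correspondence of Section~\ref{sec3} — identifying $k_d\otimes_k\Lambda$ with $k_d\widetilde{\mathsf{D}}_4$, decomposing $k_d\otimes_k M(R)$ into $\G_d$-twists of a single simple regular $k_d$-representation $M'$, and verifying that the parameter of $M'$ in $\mathbb{P}^1(k_d)$ (up to the $\G_d$-action and the exceptional points $0,1,\tfrac12$) is exactly recovered by the rational expression defining $\phi_n$ — and then checking that two such $R$ with $C_n$-conjugate invariants indeed yield isomorphic $\Lambda$-modules, not merely $\Lambda\otimes k_d$-modules. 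Everything else is either a direct appeal to Theorem~\ref{lema1} and Proposition~\ref{prop1.4}, or the routine verification that the explicit matrices in Table~\ref{table1} satisfy (R1)–(R2) and have the stated $\phi_n$-value.
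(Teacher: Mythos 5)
Your proposal is correct and takes essentially the paper's route: the paper states Theorem~\ref{teorema1.5} explicitly as a consequence of Theorem~\ref{lema1} and Proposition~\ref{prop1.4}, and your argument unpacks exactly that. The only thing to note is that the ``main obstacle'' you flag in your last paragraph --- that $\phi_n$ is a complete isomorphism invariant on $K[\A_n]_{\reg}$ with generic value --- is not actually an obstacle: it is the second sentence of Theorem~\ref{lema1}~a) (``$M(R)\cong M(R')$ if and only if $\phi_n(R)=\Si_n^m(\phi_n(R'))$''), so you can cite it directly rather than re-deriving it via the descent dictionary; once you do, your middle paragraph collapses to a one-line citation and the proof is exactly the paper's. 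Two minor slips that don't affect the conclusion: for $n\equiv2\pmod4$ and $s=\E^{1/2}\in I_n^{(1)}=k_n$ the matrix $R_n^{(1)}(s)$ actually violates (R1), so ``visibly regular for all $s\in I_n^{(j)}$'' overstates things (what matters is only that the preimage of a generic $t$ produced by Proposition~\ref{prop1.4} lands on a regular matrix, which it does); and the case $n=1$ is not ``excluded from homogeneity'' --- there are $\widetilde{\mathsf{D}}_4$-homogeneous modules with $\underline{\dim}=(1,2)$ --- but since $1\not\equiv0\pmod4$, Proposition~\ref{prop1.4}~a) covers all of $k_1$ and the $t=0$ worry never arises there.
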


\begin{remark}  These are  ``weak'' normal forms of the simple regular representations of $\Lambda$, which are $\widetilde{\mathsf{D}}_4$-homogeneous in the sense that  there are at most $2n$ different elements in $I_n^{(j)}$ that yield the ``same" simple regular representation. Our Theorem also  implies that the $\widetilde{\mathsf{D}}_4$-homogeneous simple regular representations are parameterized by $\spec(k[x])$. Here,  the relation is that  every non-zero prime ideal in $\spec(k[x])$ is generated by a unique monic irreducible polynomial $p\in k[x]$. Consider  a root $\zeta\in k_n$   of $p$, then by Proposition \ref{prop1.4}  and  Theorem \ref{teorema1.5}, we can find the $s_\zeta \in I_n^{(j)}$, for some $j\in\{1,\,2\}$, such that $M_n^{(j)}(s_\zeta)$ is a simple regular representation.
\end{remark}

%%%%%%%%%%%%%%%%%%%%%%%%%%%%%%%%%
\section{Twisted quiver representations and Galois descent}\label{sec3}
\subsection{Galois descent}
In this section we will discuss the technique of Galois descent in more detail. Let $k_d/k$ be a finite Galois extension with Galois group $\langle\sigma\rangle$. We will explore the conditions that a representation $M$ of $k_d\widetilde{\mathsf{D}}_4$ must satisfy to find an $X$ a $\Lambda$ module such that $k_d\otimes_k X\cong M$.
For unexplained notation and definitions we refer to \cite{DR75,GLS16,RR85,Ri90}.

We will use the following classical facts:
\begin{enumerate}
\item Let $n,\,m\in \Z_{>0}$ and $k_n$ as in \ref{quasi1.2}. Then, $k_n\otimes_k k_m\cong \underbrace{k_l\times\cdots\times k_l}_{g\mbox{-times}}$, as $k_n$-algebras, with $l=\mcm(n,\,m)$ and $g=\mcd(n,\,m)$.
\begin{proof}
Let $m,\,n\in\Z_{>0}$, $l=\mcm(n,\,m)$ and $g=\mcd(n,\,m)$.  Define 
$$\B_j=\frac{1}{g\E}\sum_{r=0}^{g-1}\E^\frac{g-r}{g}\otimes_k\Si_g^{j-1}(\E^\frac{r}{g})\in k_n\otimes_k k_m$$ for $1\leq j\leq g$. It can be checked by elementary computation that the $\B_j$'s satisfy $\B_j\B_i=\D_{ji}\B_j$. Since $\mcd(\frac{l}{n},\,\frac{l}{m})=1$, we have from B\'ezout's identity that there are $a,\,b\in\Z$ such that $\left(\frac{l}{n}\right)a+\left(\frac{l}{m}\right)b=1$. It is easy to see that 
$$\left\{\E^\frac{au}{n}\B_j\E^\frac{bu}{m}|1\leq j\leq g,\,0\leq u\leq \frac{l}{n}-1\right\}$$
is a basis of $k_n\otimes_kk_m$ as a $k_n$-vector space.

Thus, we obtain the $k_n$-algebra isomorphism  $$k_n\otimes_k k_m\cong \underbrace{k_l\times\cdots\times k_l}_{g\mbox{-times}},$$
where $\E^\frac{au}{n}\B_j\E^\frac{bu}{m}\mapsto \E^\frac{u}{l}e_j$, with $\{e_j|1\leq j\leq g\}$ being the standard basis of $\underbrace{k_l\times\cdots\times k_l}_{g\mbox{-times}}$.
\end{proof}
\item Let $L/k$ be a Galois extension, with Galois group $C_d=\langle\Si\rangle$ and $d$ the degree of the extension. Then, the skew group algebra (see \cite{RR85}) $L*C_d$ is isomorphic to the matrix ring $\M_{d\times d}(k)$ as a $k$-algebra. This seems to go back to, at least, Auslander and Goldman \cite[Appendix]{AG60}. See also \cite[Appendix A64]{Mi17}.
\item Let $A$ be a $k$-algebra on which $C_d=\langle\Si\rangle$ acts by $k$-linear automorphisms. Then, the category of $A*C_d$-modules is equivalent to the category of pairs $(M,\,f)$, where $M\in A$-$\modu$ and $f\in \Hom_A(\,^\Si M,\,M)$ such that $f\circ\,^\Si f\circ\cdots \circ\,^{\Si^{d-1}}f=\id_M$. See \cite{RR85} or \cite[Section 3.2]{Hu02} for more details.
\item For $\Lambda=\left[\begin{array}{cc}K&K\\0&k\end{array}\right]$  and $4|d$, we have $\psi_d\colon k_d\otimes_k\Lambda\stackrel{\sim}{\lra}k_d\widetilde{\mathsf{D}}_4$ as $k_d$-algebras, and we describe in Section \ref{isokd} how $\langle\Si\rangle$ acts under $\psi_d$ on $k_d\widetilde{\mathsf{D}}_4$ and $k_d\widetilde{\mathsf{D}}_4$-$\modu$. We have $\Si$ is compatible with $\psi_d$. Then, $(k_d\widetilde{\mathsf{D}}_4)*\langle\Si\rangle\cong(k_d\otimes_k\Lambda)*\langle\Si\rangle$. It follows from  b) that $(k_d\otimes_k\Lambda)*\langle\Si\rangle\cong\M_{d\times d}(\Lambda)$ as $k$-algebras.  Therefore $(k_d\widetilde{\mathsf{D}}_4)*\langle\Si\rangle\cong\M_{d\times d}(\Lambda)$ as $k$-algebras.
In particular, $(k_d\widetilde{\mathsf{D}}_4)*\langle\Si\rangle$-$\modu$ is equivalent to $\Lambda$-$\modu$.
Moreover, for $X\in\Lambda$-$\modu$
$$\Si^{-1}\otimes_k\id_X\colon  k_d\otimes_k X\stackrel{\sim}{\lra}\,^\Si(k_d\otimes_k X)=\,^\Si k_d\otimes_k X$$
is a $k_d\otimes_k\Lambda$-module isomorphism.
\item Let $X\in\Lambda$-$\modu$ be simple regular. We have $\End_\Lambda(X)=k_m$ for some $m\in\N$, since $k=\C\parE$ is quasi-finite. We have by a):
\begin{eqnarray}
\End_{k_d\otimes_k\Lambda}(k_d\otimes_k X)\cong k_d\otimes_k\End_\Lambda(X)\cong \underbrace{k_d\times\cdots\times k_d.}_{m},
\end{eqnarray}
with $d=\mcm(4,\,m)$. Furthermore, $\tau_{k_d\widetilde{\mathsf{D}}_4}(k_d\otimes_k X)\cong k_d\otimes_k\tau_\Lambda(X)\cong k_d\otimes_k X$, where $\tau$ denotes the Auslander-Reiten translation. So, $k_d\otimes_k X\cong N_0\oplus\cdots\oplus N_{m-1}$ for simple regular modules $N_i$ with $\Hom_{k_d\widetilde{\mathsf{D}}_4}(N_i,\,N_j)\cong\D_{i,\,j}\,k_d$. 

Furthermore, since $^\Si(k_d\otimes_k X)\stackrel{\sim}{\lra}k_d\otimes_k X$, we can assume that
\begin{eqnarray}
\,^\Si N_i\cong N_{i+1}\mbox{ for }i=0,\,1,\cdots,\,m-2\mbox{ and } \,^\Si N_{m-1}\cong N_0.
\end{eqnarray}
It is easy to classify these families of $k_d\widetilde{\mathsf{D}}_4$-modules, see Section \ref{Invariantrep}.
\item Let $M=\bigoplus_{j=0}^{m-1}\,^{\Si^j}N$ for a simple regular $k_d\widetilde{\mathsf{D}}_4$-module $N$ with $\End_{k_d\widetilde{\mathsf{D}}_4}(N)\cong k_d$, $\Hom_{k_d\widetilde{\mathsf{D}}_4}(\,^{\Si^j}N,\,N)=0$ for $j=1,\,2,\cdots,\,m-1$, and $\,^{\Si^m}N\cong N$. It is easy to see that finding an isomorphism $f\colon \,^\Si M\lra M$ with $f\circ\,^\Si f\circ\cdots\circ\,^{\Si^{d-1}} f=\id_M$ is equivalent to finding $\tilde{f}\colon \,^{\Si^m}N\lra  N$, such that
\begin{eqnarray}\label{3asterisco}
\tilde{f}\circ\,^{\Si^m} \tilde{f}\circ\cdots\circ\,^{\Si^{km}} \tilde{f}=\id_N\mbox{ for } k\coloneqq \frac{d}{m}-1.
\end{eqnarray}
In fact, up to rescaling, $f$ must be of the form
$$f=\left[\begin{array}{ccccc}0&0&\cdots&0&\tilde{f}\\ \id_2&0&\cdots&0&0\\0&\id_3&\ddots&\vdots&0\\\vdots&\vdots&\ddots&\ddots&\vdots\\0&0&\cdots&\id_{m-1}&0\end{array}\right],$$
since $\Hom_{k_d\widetilde{\mathsf{D}}_4}(\,^{\Si^i}N,\,^{\Si^j}N)\cong\D_{i,\,j}\,k_d$, and thus,  
$$f\circ\,^\Si f\circ\cdots\circ\,^{\Si^{m-1}} f=\diag(\,^{\Si^i}f)_{i=0,\,1,\cdots,\,m-1}$$ and $\id_i=\id_{\,^{\Si^i}M}$.

We will exhibit  in Section \ref{Invariantrep} such an $\tilde{f}\colon \,^{\Si^m}N\lra N$ with property (\ref{3asterisco}) for each regular $k_d\widetilde{\mathsf{D}}_4$-module with $\End_{k_d\widetilde{\mathsf{D}}_4}(N)\cong k_d$ and minimal $m$ such that $\,^{\Si^m}N\cong N$ and $\frac{d}{m}\in\{1,\,2,\,4\}$.
\begin{remark}{
If $f'\colon \,^\Si M\lra M$ is another isomorphism with $f'\circ\,^\Si f'\circ\cdots\circ\,^{\Si^{d-1}} f'$ equal to $\id_M$, then again, since $\End_{k_d\widetilde{\mathsf{D}}_4}(N)\cong k_d$, we must (using the above notation) have $\tilde{f}'=c\tilde{f}$ for some $c\in k_d$ with 
$$N_{d,\,m}(c)\coloneqq c\Si^m(c)\cdots\Si^{km}(c)=1_{k_d}\,\,(k=\frac{d}{m}-1)$$
However, by the Lemma below, in this case, we can find $x\in k_d^*$ with $c=\Si^m(x)^{-1}\cdot x$. This implies that $(M,\,f)\cong (M,\,f')$ since we have 
\begin{center}
\begin{tikzpicture}
[->,>=stealth',shorten >=1pt,auto,node distance=2cm,thick,main node/.style=]
  \node[main node] (1) {$\,^{\Si^m}N$};   
  \node[main node] (2) [right of=1] {$N$};
  \node[main node] (3) [below of=1] {$\,{\Si^m}N$};
  \node[main node] (4) [below of=2] {$N$};
 \path[every node/.style={font=\sffamily\small}]     
(1) edge node   {$\tilde{f}$} (2)          
(1) edge node    {$\,^{\Si^m}(x\cdot\id_N)$} (3)           
(3) edge node  {$c\tilde{f}$} (4)
(2) edge node  {$x\cdot\id_N$}  (4)              
;
\end{tikzpicture}
\end{center}
}\end{remark}
The following lemma seems to be well-known. We include a proof for the convenience of the reader.
\begin{lema}If $c\in k_n$ with $N_{n,\,1}(c)=1$, then there exists $x\in k_n$ with $\Si(x)^{-1}x=c$.
\end{lema}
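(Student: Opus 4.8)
The statement is precisely Hilbert's Theorem 90 for the cyclic extension $k_n/k$ with Galois group $C_n=\langle\Si\rangle$: if $c\in k_n$ has norm $N_{n,1}(c)=c\cdot\Si(c)\cdots\Si^{n-1}(c)=1$, then $c$ is of the form $\Si(x)^{-1}x$ for some $x\in k_n^*$. Since the paper has already fixed all the structure we need ($k_n$ is a field, the extension is Galois cyclic of degree $n$), the cleanest route is the classical averaging argument via linear independence of characters, which I would write out in this concrete setting.

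First I would record the normalization: $N_{n,1}(c)=1$ forces $c\neq 0$, and for $0\le j\le n-1$ set
$$c_j \coloneqq c\,\Si(c)\,\Si^2(c)\cdots \Si^{j-1}(c)\in k_n^*,$$
with the convention $c_0=1$; note $c_n=N_{n,1}(c)=1$. The key algebraic identity is the cocycle relation $c_{j+1}=c_j\cdot\Si^j(c)$, equivalently $\Si(c_j)\cdot c = c_{j+1}$ once one checks $\Si(c_j)=\Si(c)\Si^2(c)\cdots\Si^j(c)$, so that $c\cdot\Si(c_j)=c_{j+1}$. Then for any $z\in k_n$ I would define
$$x \coloneqq \sum_{j=0}^{n-1} c_j\,\Si^j(z).$$
Applying $\Si$ and using $\Si(c_j)\Si^{j+1}(z)$ together with the cocycle identity $c\cdot\Si(c_j)=c_{j+1}$ gives
$$c\cdot\Si(x)=\sum_{j=0}^{n-1} c\,\Si(c_j)\,\Si^{j+1}(z)=\sum_{j=0}^{n-1} c_{j+1}\,\Si^{j+1}(z)=\sum_{j=1}^{n} c_j\,\Si^j(z)=x,$$
where the last step uses $c_n=1$ and $\Si^n=\id$ so that the $j=n$ term equals the $j=0$ term. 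Hence $c\,\Si(x)=x$, i.e.\ $\Si(x)^{-1}x=c$, provided $x\neq 0$.

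The only remaining point — and the one I expect to be the genuine content — is to produce $z$ with $x\neq 0$. Here I would invoke Dedekind's theorem on linear independence of the automorphisms $\id,\Si,\dots,\Si^{n-1}$ as characters $k_n^*\to k_n^*$ (equivalently, of the distinct field embeddings $k_n\hookrightarrow k_n$): since the coefficients $c_j$ are all nonzero, the map $z\mapsto \sum_{j=0}^{n-1} c_j\,\Si^j(z)$ is not identically zero, so some $z\in k_n$ makes $x\neq 0$. This choice of $z$ then yields $x\in k_n^*$ with $\Si(x)^{-1}x=c$, completing the proof.

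Alternatively — and perhaps more in the spirit of the rest of the paper, which works explicitly with $k_n=\C\parEn$ and $\Si_n(\E^{j/n})=\zeta_n^j\E^{j/n}$ — one can argue directly: by the paper's conventions $\Si$ acts on the $k$-basis $\{\E^{j/n}\}_{j=0}^{n-1}$ by the diagonal scalars $\zeta_n^j$, so taking $z=\E^{r/n}$ for $r=0,1,\dots,n-1$ gives $n$ elements $x_r=\sum_{j}c_j\zeta_n^{rj}\E^{j/n}$ whose ``coordinate vectors'' $(c_j\zeta_n^{rj})_j$ form a Vandermonde-type system; since the $\zeta_n^r$ are distinct and the $c_j$ nonzero, these vectors span, so at least one $x_r$ is nonzero. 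Either way the nonvanishing of $x$ is the crux, and everything else is the formal cocycle manipulation above.
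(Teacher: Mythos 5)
Your proof is correct, and it is the classical Noether averaging argument for Hilbert's Theorem 90 via linear independence of characters. The paper, however, proves the lemma by a genuinely different route, one more tightly integrated with the machinery of the surrounding section: it has already recorded (item (b) of the list in Section~\ref{sec3}) that the skew group algebra $k_n * C_n$ is isomorphic to $\M_{n\times n}(k)$, a simple algebra, hence has a unique simple module up to isomorphism. The paper observes that pairs $(k_n, c\Si)$ with $c\in\widetilde{N}_n$ are precisely the $k_n * C_n$-module structures on $k_n$, that two such pairs are isomorphic exactly when $c$ and $c'$ differ by an element of $\ima\psi_n$ where $\psi_n(x)=\Si(x)^{-1}x$, and concludes surjectivity of $\psi_n$ from the uniqueness of the $n$-dimensional simple module. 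Your approach is more self-contained and elementary (it uses only linear independence of distinct field automorphisms, not the Auslander--Goldman identification of $k_n * C_n$ with a matrix algebra), whereas the paper's argument is shorter in context because it reuses item (c) (the equivalence between $A * C_d$-modules and pairs $(M,f)$ with a cocycle condition) and item (b), and thereby illuminates why the lemma is needed at exactly that point: it is the statement that there is only one isomorphism class of such pairs. Both are valid; your concrete Vandermonde variant for $k_n=\C\parEn$ is a nice explicit touch in keeping with the paper's computational style, though the paper does not use it.
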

\begin{proof}
Recall that $k_n*C_n\stackrel{\sim}{\lra}\M_{n\times n}(k)$ is a simple algebra. Thus, $(k_n,\, \Si)$ corresponds to the unique (simple) $k_n*C_n$-module that has $k$-dimension $n$ up to isomorphism.

On the other hand, we have a homomorphism of abelian groups
$$\psi_n\colon k_n^*\lra\widetilde{N}_n\coloneqq\{c\in k_n^*|N_{m,\,1}(c)=1\}\subset k_n^*$$
$$x\mapsto \Si(x)^{-1}x,$$
and $\widetilde{N}_n/\ima\psi_n$ parameterizes  the isoclasses  of $k_n*C_n$-modules of the form $(k_n,\,c\Si)$ with $c\in \widetilde{N}_n$.

Thus, $\psi_n$ must be surjective, since there is only one isoclass of $n$-$\dim$ $k_n*C_n$-modules.
\end{proof}
\item Considering c) we conclude from f) that the isoclasses of simple regular $\Lambda$-modules with $m$-dimensional endomorphism ring ($\cong k_m$) are, for $d=\mcm(4, \,m)$, in bijection with the $C_d$-orbits of simple regular $k_d\widetilde{\mathsf{D}}_4$-modules $N$ with:
\begin{itemize}
\item $\End_{k_d\widetilde{\mathsf{D}}_4}(N)\cong k_d$,
\item $\,^{\Si^m}N\cong N$ and
\item $\,^{\Si^i}N\not\cong N$ for $i=1,\,2,\cdots,\,m-1$.
\end{itemize}
In Section \ref{Invariantrep}, we will find a simple regular $\Lambda$ module $X$ with $\End_\Lambda(X)=k_m$ and $k_m\otimes_k X\cong\bigoplus_{j=0}^{m-1}\,^{\Si^m}N$, for each $N$ with this property.
\end{enumerate}

  \subsection{Quivers with automorphisms}\label{isokd}

  In this section we assume that $d=\mcm(4,n)$. Let $\mathcal{B}=\{\B_1,\,\B_2,\,\B_3,\,\B_4\}$ be a basis of $k_d\otimes_k K$, where $\B_j$ is defined by 
 $$\frac{1}{4\E}\sum_{l=0}^3\E^\frac{4-l}{4}\otimes_k\Si^{j-1}(\E^\frac{l}{4}),$$ for $1\leq j\leq 4$. We obtain the $k_d$-algebra isomorphism  $k_d\otimes_k K\cong k_d\times k_d\times k_d\times k_d$. In fact, for the isomorphism, we only need $d$ be a multiple of $4$.

Recall that $\Lambda=\left[\begin{array}{cc} K& K\\0& k\end{array}\right]$. We can adapt the above basis to obtain the following $k_d$-basis of $k_d\otimes_{ k}\Lambda$:
 \begin{eqnarray}\label{base}
 \left\{\B_{jk}|1\leq j\leq4,\,\,1\leq k\leq 2,\,\B_{jk}=\frac{1}{4\E}\sum_{l=0}^3\E^\frac{4-l}{4}\otimes_k\Si^{j-1}(\E^\frac{l}{4})E_{1k}\right\}\cup\left\{\B_{01}=1\otimes_k E_{22}\right\}
 \end{eqnarray}
 where $\{E_{11},\,E_{12},\,E_{21},\,E_{22}\}$ is the standard basis of $\M_{2\times 2}(k_d)$. Then:
 \begin{eqnarray*}
 k_d\otimes_k\Lambda&\lra&  k_d \widetilde{\mathsf{D}}_4\mbox{, defined by}\\
 \B_{01}&\mapsto&\Ep_{0}\\
 \B_{j1}&\mapsto&\Ep_{j}\\
 \B_{j2}&\mapsto&\A_{j}
 \end{eqnarray*}
 is a $k_d$-algebra isomorphism, where $ k_d \widetilde{\mathsf{D}}_4$ is the path  algebra of $\widetilde{\mathsf{D}}_4\mbox{, }\Ep_j\in (\widetilde{\mathsf{D}}_4)_0$ and $\A_j\in (\widetilde{\mathsf{D}}_4)_1$. 
  
  On the other hand, take the $k$-automorphism $\Si_d\otimes_k\id\colon  k_d\otimes_k\Lambda\lra k_d\otimes_k\Lambda$, which acts on the basis in \eqref{base} as follows:
  \begin{eqnarray*}
  (\Si_d\otimes_k\id)(\B_{jk})&=&\B_{j-1,\,k}, \,1\leq k\leq 2\mbox{ and }(\Si_d\otimes_k\id)(\B_{01})=\B_{01}.
\end{eqnarray*}
Here we take  $j$ modulo $4$. Thus, we obtain the morphism 
$$\G_d\colon  k_d \widetilde{\mathsf{D}}_4\lra k_d \widetilde{\mathsf{D}}_4$$
$$k\rho\mapsto\Si_d(k)\G(\rho),$$
where $\G$ is the following automorphism of order 4 of the quiver $\widetilde{\mathsf{D}}_4$:

\begin{center}
\begin{tikzpicture}
[->,>=stealth',shorten >=1pt,auto,node distance=1.5cm,thick,main node/.style=]
  \node[main node] (1) {$4$};   
  \node[main node] (2) [below right of=1] {$0$};
  \node[main node] (3) [below left of=2] {$3$};
  \node[main node] (4) [above right of=2] {$1$};
  \node[main node] (5) [below right of=2] {$2$.};
\path[every node/.style={font=\sffamily\small}]     
(2) edge node   {} (1)         
    edge node     {}(4)             
  edge node   {}(3)              
   edge node  {}(5)
; 
\draw[->, dashed] (1) to[bend right]  (3);
\draw[->, dashed] (3) to[bend right]  (5);
\draw[->, dashed] (4) to[bend right]  (1);
\draw[->, dashed] (5) to[bend right]  (4);
\end{tikzpicture}
\end{center}

\subsection{Invariant representations}\label{Invariantrep}
Consider a simple regular representation $M$, i.e. $\varphi_M\colon K\otimes_k k^{2n}\lra K^{n}$ and  $\End_\Lambda(M)=k_m$, for some $m\in\Z_{>0}$. Let $N$ be the matrix of $\varphi_M$. For $d=\mcm\{4,n\}$, apply $k_d\otimes_{k}-$ to our  representation. We have $\End_{k_d\otimes_k\Lambda}(k_d\otimes_k M)\cong k_d\otimes_k \End_{\Lambda}(M)\cong k_d\times\cdots\times k_d$. Then, $k_d\otimes_k M$ can be identified with the following representation:

\begin{center}
 \begin{picture}(100,110)
  \put(85,2){\makebox(0,0){$  k_d^n$}}
\put(85,35){\makebox(0,0){$  k_d^n$}}
\put(85,67){\makebox(0,0){$  k_d^n$}}
\put(85,102){\makebox(0,0){$  k_d^n$}}
  \put(15,54){\makebox(0,0){$  k_d^{2n}$}}
  \put(42,87){\makebox(0,0){$\scriptstyle N$}}
  \put(54,70){\makebox(0,0){$\scriptstyle\Si^3(N)$}}
  \put(59,50){\makebox(0,0){$\scriptstyle\Si^2(N)$}}
  \put(61,25){\makebox(0,0){$\scriptstyle\Si(N)$}}
  \put(23,63){\vector(4,3){53}}
  \put(23,54){\vector(4,1){53}}
  \put(23,49){\vector(4,-1){53}}
  \put(23,43){\vector(4,-3){53}}
\end{picture}
\end{center}
where $\Si^j(N)$ is, by definition, the matrix where $\Si^j$ is applied to each component of $N$, and $\Si\coloneqq \Si_4$. This is a $k_d$-linear representation of the quiver $\tilde{\mathsf{D}}_4$.

The following is adapted from \cite{Hu02}. Let $\calm$ be a $ k_d \widetilde{\mathsf{D}}_4$-module. We define a module $\,^{\G_d}\calm$ by taking the same underlying $k$-vector space  as $\calm$, but with a new action
$$p*m=\G_d^{-1}(p)m\mbox{, for }p\in k_d \widetilde{\mathsf{D}}_4.$$
If $f\colon \calm\lra\caln$ is a module homomorphism, we obtain $\,^{\G_d}f\colon \,^{\G_d}\calm\lra\,^{\G_d}\caln$ a homomorphism as follows. As a linear map, we set $\,^{\G_d}f=f$. Then,
$$f(p*m)=f(\G_d^{-1}(p)m)=\G_d^{-1}(p)f(m)=p*f(m).$$
This yields a $k_d$-linear autoequivalence $F(\G_d)$ of $\modu k_d\widetilde{\mathsf{D}}_4$ such that $F(\G_d^r)=F(\G_d)^r$. Note that, in particular $\calm$ is indecomposable if and only if $\,^{\G_d}\calm$ is indecomposable as well.

We know that the categories $k_d \widetilde{\mathsf{D}}_4$-$\modu$  and $\Rep (\widetilde{\mathsf{D}}_4, k_d)$ are equivalent, so the functor $F(\G_d)$ must act on $\Rep( \widetilde{\mathsf{D}}_4, k_d)$. Let $M=(V_j,f_{\A_j})$ be a  $ k_d$-representation of $\widetilde{\mathsf{D}}_4$  and $\calm$ the corresponding  $ k_d \widetilde{\mathsf{D}}_4$-module, so that $\calm$ has underlying $k$-vector space $V=\bigoplus_{j=0}^4 V_j$. We want to describe the representation $\,^{\G_d} M=(W_j,g_{\A_j})$ corresponding to the module $\,^{\G_d}\calm$ in terms of the original representation $M$. Obviously,
$$W_j=\Ep_j*V=\G^{-1}(\Ep_j)V= \Ep_rV=V_r,$$ 
with $r\equiv j-1(\modu 4)$ for $1\leq j\leq 4$, while for $j=0$ we have
$$W_0=\Ep_0*V=\G_d^{-1}(\Ep_0)V= \Ep_0V=V_0.$$
Note that the $k_d$-vector space structure on $W_j$ is defined by the product $k*v_j=\Si_d^{-1}(k)v_j$, for all $k\in k_d$ and $v_j\in W_j$. On the other hand, for each $f_{\A_j}\colon V_0\lra V_j$, we can associate a matrix  $A_j\in\M_{\dim V_j\times\dim V_0}( k_d)$. Then, $g_{\A_j}\colon W_0\lra W_j$ is defined by  
$$g_{\A_j}(v)=\A_j*v=\G_d^{-1}(\A_j)v=\A_rv=A_rv=\Si_d^{-1}(\Si_d(A_r))v=\Si_d(A_r)*v,$$
where $\Si_d(A_r)$ means that $\Si_d$ is applied to each entry of the matrix $A_r$. So, for a  $k_d$-representation $M$ of $\widetilde{\mathsf{D}}_4$ with the following form
\begin{center}
 \begin{picture}(10,85)
  \put(0,75){\makebox(0,0){$M\coloneqq $}}
   \end{picture}  \begin{tikzpicture}
[->,>=stealth',shorten >=1pt,auto,node distance=1.5cm,thick,main node/.style=]
\node (0) at (0,2.25) {$V_0$}; 
\node (4) at (2.5,0) {$V_4$,}; 
\node (3) at (2.5,1.5) {$V_3$}; 
\node (2) at (2.5,3) {$V_2$}; 
\node (1) at (2.5,4.5) {$V_1$}; 
\node (5) at (1.5,3.1) {$A_2$}; 
\path[every node/.style={font=\sffamily\small}]     
(0) edge node    {$A_1$} (1)         
     edge node    {$A_4$} (4)             
     edge node    {$A_3$} (3)              
     edge node    {} (2);
 \end{tikzpicture}
\end{center}
the  representation $\,^{\G_d} M$ has the form:
\begin{center}
 \begin{picture}(14,85)
  \put(0,73){\makebox(0,0){$\,^{\G_d}M=$}}
\end{picture} \begin{tikzpicture}
[->,>=stealth',shorten >=1pt,auto,node distance=1.5cm,thick,main node/.style=]
\node (0) at (0,2.25) {$V_0$}; 
\node (4) at (2.5,0) {$V_1$.}; 
\node (3) at (2.5,1.5) {$V_4$}; 
\node (2) at (2.5,3) {$V_3$}; 
\node (1) at (2.5,4.5) {$V_2$}; 
\node  at (1.5,3.1) {$\scriptstyle\Si_d(A_3)$}; 
\node  at (1.72,1.2) {$\scriptstyle\Si_d(A_1)$}; 
\path[every node/.style={font=\sffamily\small}]     
(0) edge node    {$\scriptstyle\Si_d(A_2)$} (1)         
     edge node    {} (4)             
     edge node    {$\scriptstyle\Si_d(A_4)$} (3)              
     edge node    {} (2);
 \end{tikzpicture}
\end{center}

Following \cite[Section 3]{Hu04}, we say that  $M$  is an {\em isomorphically invariant representation} (ii-representation) if  $\,^{\G_d}M\cong M$. We say that $M$ is an {\em ii-indecomposable} if it is not isomorphic to the proper direct sum of two ii-representations. As in \cite{Hu02} {Lemma 2.3.1}, the ii-indecomposables are precisely the representations $M$ of the form 
$$M\cong N\oplus\,^{\G_d}N\oplus\,^{\G_d^2}N\oplus\cdots\oplus\,^{\G_d^{n-1}}N,$$
where $N$ is an indecomposable representation, and $n\geq 1$ is minimal such that $N\cong \,^{\G_d^{n}}N$.
Also, we say that $M$ is an {\em ii-simple regular} if $M$ is ii-indecomposable and  all summands are simple regular (see Section \ref{sec1.3}).

 Now we introduce the representations $M_t$ which, as we will see later, are the direct summands of the ii-indecomposable representations for certain $t$.  For $n\in\N$, $d=\mcm(n,4)$ and $t\in k_d$, the representation
 \begin{center}
\begin{picture}(60,101)
  \put(73,0){\makebox(0,0){$ k_d$,}}
\put(73,34){\makebox(0,0){$ k_d$}}
\put(73,67){\makebox(0,0){$ k_d$}}
\put(73,101){\makebox(0,0){$ k_d$}}
\put(-23,53){\makebox(0,0){$M_t\coloneqq $}}
  \put(0,53){\makebox(0,0){$ k_d^2$}}
  \put(20,88){\makebox(0,0){$\scriptstyle(1\,0)$}}
  \put(33,66){\makebox(0,0){$\scriptstyle(0\,1)$}}
  \put(43,49){\makebox(0,0){$\scriptstyle(1\,1)$}}
  \put(51,24){\makebox(0,0){$\scriptstyle(1\,t)$}}
  \put(10,62){\vector(4,3){53}}
  \put(10,53){\vector(4,1){53}}
  \put(10,48){\vector(4,-1){53}}
  \put(10,42){\vector(4,-3){53}}
\end{picture}
\end{center}
has $\End_{k_d\widetilde{\mathsf{D}}_4} M_t\cong k_d$. Thus, it is indecomposable and regular. Moreover, for $t,s\in k_d$, we have $\Hom_{k_d\widetilde{\mathsf{D}}_4}(M_t,\,M_s)=0$ if $t\neq s$.

It is easy to see that we have the following isomorphism for $t\in k_d\setminus\{0,\,1\}$:
 \begin{center}
\begin{picture}(60,100)
  \put(73,-4){\makebox(0,0){$ k_d$.}}
\put(73,30){\makebox(0,0){$ k_d$}}
\put(73,63){\makebox(0,0){$ k_d$}}
\put(73,97){\makebox(0,0){$ k_d$}}
\put(-23,49){\makebox(0,0){$M_t\cong$}}
  \put(0,49){\makebox(0,0){$ k_d^2$}}
  \put(20,84){\makebox(0,0){$\scriptstyle(0\,1)$}}
  \put(33,62){\makebox(0,0){$\scriptstyle(1\,1)$}}
  \put(43,45){\makebox(0,0){$\scriptstyle(1\,1- t)$}}
  \put(51,20){\makebox(0,0){$\scriptstyle(1\,0)$}}
  \put(10,58){\vector(4,3){53}}
  \put(10,49){\vector(4,1){53}}
  \put(10,44){\vector(4,-1){53}}
  \put(10,38){\vector(4,-3){53}}
\end{picture}
\end{center}
Moreover, by definition,
\begin{eqnarray}\label{1.6.1}
 %\begin{center}
\begin{picture}(60,100)
  \put(73,-4){\makebox(0,0){$ k_d$,}}
\put(73,30){\makebox(0,0){$ k_d$}}
\put(73,63){\makebox(0,0){$ k_d$}}
\put(73,97){\makebox(0,0){$ k_d$}}
\put(-25,49){\makebox(0,0){$\,^{\G_d} M_t=$}}
  \put(0,49){\makebox(0,0){$ k_d^2$}}
  \put(20,84){\makebox(0,0){$\scriptstyle(0\,1)$}}
  \put(33,62){\makebox(0,0){$\scriptstyle(1\,1)$}}
  \put(43,45){\makebox(0,0){$\scriptstyle(1\,\Si_d(t))$}}
  \put(51,20){\makebox(0,0){$\scriptstyle(1\,0)$}}
  \put(10,58){\vector(4,3){53}}
  \put(10,49){\vector(4,1){53}}
  \put(10,44){\vector(4,-1){53}}
  \put(10,38){\vector(4,-3){53}}
\end{picture}
%\end{center}
\end{eqnarray}
 where  $\Si_d\in\Gal(k_d,\, k)$ is  defined by $\E^\frac{1}{d}\mapsto\zeta_d\E^\frac{1}{d}\mbox{, with }\zeta_d=e^\frac{2\pi i}{d}$  a  primitive $d$-th root of unity. 
Therefore, for $t\in k_d\setminus\{0,\,1\}$, we have the isomorphism  $\,^{\G_d} M_t\cong M_{1-\Si_d(t)}$. 
Moreover, let $\tilde{\Si}\colon k_d\lra k_d$  be the function defined by $t\mapsto 1-t$. We can prove by induction that  for $m\in\N$, we have the following isomorphism: 
 $$  \,^{\G_d^m}M_t\cong M_{\Si_d^m\circ\tilde{\Si}^m(t)}.$$
 Hence, we are interested in $t\in k_d\setminus\{0,\,1\}$ that satisfy the following conditions:
  \begin{eqnarray}\label{1.2}
t=\Si_d^n\circ\tilde{\Si}^n(t),
\end{eqnarray}
and for $1\leq m\leq n-1$,
\begin{eqnarray}\label{1.3}
t\neq\Si_d^m\circ\tilde{\Si}^m(t).
\end{eqnarray}

For $n\in\N$ and $d=\mcm(4,n)$, every  $t\in k_d$ can be written as:
 $$t=\left\{\begin{array}{ll}t_0+t_1\E^\frac{1}{d}+t_2\E^\frac{2}{d}+t_3\E^\frac{3}{d}&\mbox{ if }n\mbox{ is odd, with }t_j\in k_n\\t_0+t_1\E^\frac{1}{d}&\mbox{ if }n\equiv2(\modu4)\mbox{, with }t_j\in k_n\\t&\mbox{ if }n\equiv0(\modu4).\end{array}\right.$$
We have $\Si_d|_{k_n}=\Si_n$ and thus, $\Si_d^n(\E^\frac{1}{n})=\E^\frac{1}{n}$. Let $t\in k_d$ have the following form:
 $$t=\left\{\begin{array}{ll}\frac{1}{2}+s'\E^\frac{2}{d}&\mbox{ if }n\mbox{ is odd,}\\s'&\mbox{ if }n\mbox{ is even,}\end{array}\right.$$
for some $s'\in k_n$. Then, $t$ satisfies (\ref{1.2}).  Thus, we can actually write $t$ as 
$$t=\frac{1}{2}+s\E^\frac{1}{2},\mbox{where }s=\left\{\begin{array}{ll}\frac{s'}{\E}\E^\frac{n+1}{2n}&\mbox{ if }n\mbox{ is odd,}\\\frac{2s'-1}{2\E}\E^\frac{1}{2}&\mbox{ if }n\mbox{ is even,}\end{array}\right.$$
and in both  cases $s\in k_n$. For $1\leq m\leq n$,  this follows:
  $$\Si_d^m\circ\tilde{\Si}^m(t)=\frac{1}{2}+\Si_d^m(s)\E^\frac{1}{2}=\frac{1}{2}+\Si_n^m(s)\E^\frac{1}{2}.$$

Then, $t=\frac{1}{2}+s\E^\frac{1}{2}$ satisfies (\ref{1.3}) if $s\in k_n$ is generic. Thus, $M=\bigoplus_{j=0}^{n-1}\,^{\Si_d^j}M_{\frac{1}{2}+s\E^\frac{1}{2}}$ is ii-indecomposable for any generic $s\in k_n$.
\begin{prop} Let $M=\bigoplus_{j=0}^{n-1}\,^{\Si_d^j}M_{\frac{1}{2}+s\E^\frac{1}{2}}$ be an ii-indecomposable. Then, there exists an isomorphism $f\colon M\lra\,^{\G_d}M$ such that the automorphism $^{\G_d^{d-1}}f\circ\cdots\circ^{\G_d}f\circ f$ of M is the identity.
\end{prop}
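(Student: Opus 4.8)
The plan is to reduce the assertion, via the ``companion block form'' argument of point~(f) in Section~\ref{sec3}, to the construction of a single wrap-around isomorphism, and then to obtain that isomorphism by a Hilbert~90/norm argument. Write $t=\tfrac12+s\E^{\frac12}$ and $t_j\coloneqq\Si_d^{\,j}\circ\tilde\Si^{\,j}(t)=\tfrac12+\Si_n^{\,j}(s)\E^{\frac12}$ for $j\in\Z$; since $M$ is ii--indecomposable the element $s\in k_n$ is generic, so $t_0,\dots,t_{n-1}$ are pairwise distinct and $t_n=t_0=t$ (as $\Si_d^{\,n}$ fixes $k_n$ pointwise). Hence the summands $P_j\coloneqq{}^{\G_d^j}M_t\cong M_{t_j}$ are pairwise non--isomorphic, with $\End_{k_d\widetilde{\mathsf{D}}_4}(P_j)\cong k_d$ and $\Hom_{k_d\widetilde{\mathsf{D}}_4}(P_i,P_j)=0$ for $i\ne j$, while ${}^{\G_d^n}M_t\cong M_t$ with $n$ minimal and $r\coloneqq d/n=4/\mcd(4,n)\in\{1,2,4\}$. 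By point~(f) of Section~\ref{sec3}, applied in the direction $M\to{}^{\G_d}M$, an isomorphism $f\colon M\lra{}^{\G_d}M$ with ${}^{\G_d^{d-1}}f\circ\cdots\circ{}^{\G_d}f\circ f=\id_M$ exists if and only if there is an isomorphism $\tilde f\colon M_t\lra{}^{\G_d^n}M_t$ with
$${}^{\G_d^{(r-1)n}}\tilde f\circ\cdots\circ{}^{\G_d^{n}}\tilde f\circ\tilde f=\id_{M_t}.$$
Up to rescaling, $f$ must have the companion block form of that discussion, and its $d$--fold twisted composite restricts to the left--hand side above on the summand $P_0=M_t$; since $\End_{k_d\widetilde{\mathsf{D}}_4}(M_t)\cong k_d$ this left--hand side is automatically a scalar $c\cdot\id_{M_t}$ with $c\in k_d^{\ast}$.

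Now I would produce such a $\tilde f$. Fix any isomorphism $\tilde f_0\colon M_t\lra{}^{\G_d^n}M_t$ (one exists since ${}^{\G_d^n}M_t\cong M_t$), with defect $c_0\in k_d^{\ast}$ defined by ${}^{\G_d^{(r-1)n}}\tilde f_0\circ\cdots\circ\tilde f_0=c_0\,\id_{M_t}$. Replacing $\tilde f_0$ by $\lambda\tilde f_0$, $\lambda\in k_d^{\ast}$, multiplies $c_0$ by $\prod_{j=0}^{r-1}\Si_d^{\,jn}(\lambda)=N_{k_d/k_n}(\lambda)$, the norm of the cyclic extension $k_d/k_n$ of degree $r$. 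Two observations then finish the argument. First, $c_0\in k_n^{\ast}$: applying the autoequivalence ${}^{\G_d^n}(-)$ to the defect equation and using that it acts as the identity on underlying linear maps, one gets ${}^{\G_d^n}(c_0\,\id_{M_t})=\tilde f_0\circ(c_0\,\id_{M_t})\circ\tilde f_0^{-1}$ as endomorphisms of ${}^{\G_d^n}M_t$; reading the left--hand side through the twisted $k_d$--action gives the scalar $\Si_d^{\,n}(c_0)$, the right--hand side gives $c_0$, hence $\Si_d^{\,n}(c_0)=c_0$, i.e.\ $c_0\in k_n^{\ast}$. Second, $N_{k_d/k_n}\colon k_d^{\ast}\to k_n^{\ast}$ is surjective: since $k_n=\C\parEn$ is quasi--finite with trivial Brauer group one has $k_n^{\ast}/N_{k_d/k_n}(k_d^{\ast})\hookrightarrow\mathrm{Br}(k_d/k_n)=0$; alternatively, writing $k_d=k_n[\pi^{1/r}]$ with uniformizer $\pi=\E^{1/n}$ and $r\le 4$ one solves $N_{k_d/k_n}(\lambda)=u$ explicitly for every $u\in k_n^{\ast}$, using that $\C$ is algebraically closed. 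Choosing $\lambda$ with $N_{k_d/k_n}(\lambda)=c_0^{-1}$ and putting $\tilde f\coloneqq\lambda\tilde f_0$ yields the wrap-around isomorphism, and reassembling it with identities into the companion block form gives $f$.

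The only step requiring genuine care is the Galois--twist bookkeeping establishing $c_0\in k_n^{\ast}$; the rest is either quoted from Section~\ref{sec3} or standard field theory. A more computational alternative --- which I would carry out anyway, since it exhibits the matrices occurring in Table~\ref{table1} --- is to build $\tilde f$ directly by composing the natural isomorphisms $g_u\colon{}^{\G_d}M_u\stackrel{\sim}{\lra}M_{1-\Si_d(u)}$ obtained from the two displayed presentations of $M_u$ in \ref{Invariantrep} (the identity on the four one--dimensional leaf spaces and a fixed matrix in $\GL_2(k_d)$ at the central vertex), using $\Si_d\tilde\Si=\tilde\Si\Si_d$. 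The cocycle identity ${}^{\G_d^{(r-1)n}}\tilde f\circ\cdots\circ\tilde f=\id_{M_t}$ then reduces, separately for $r=1,2,4$, to a single $2\times 2$ matrix identity at the central vertex plus the bookkeeping of leaf scalars, with $r=4$ (that is, $n$ odd) the most laborious case.
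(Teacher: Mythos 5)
Your proposal is correct, but it takes a genuinely different route from the paper's proof. The paper proves the proposition by a case split on $n\bmod 4$ and, in each case, writes down explicit block matrices $A_1,\dots,A_4,\left[\begin{smallmatrix}X&Y\\Z&W\end{smallmatrix}\right]$ of companion type; the scalar $a$ in the corner entries is produced by the square-root Lemma~\ref{cuadrado}, which serves as an explicit, degree-by-degree version of norm solvability for the extension $k_d/k_n$. You instead take the companion-block reduction of point~(f) as given, observe that any choice of wrap-around map $\tilde f_0\colon M_t\to{}^{\G_d^n}M_t$ has a defect $c_0\in k_d^{\ast}$, show by a Galois-twist computation that in fact $c_0\in k_n^{\ast}$, and then rescale $\tilde f_0$ using surjectivity of $N_{k_d/k_n}\colon k_d^{\ast}\to k_n^{\ast}$. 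That surjectivity is the statement $\hat H^0(\Gal(k_d/k_n),k_d^{\ast})=0$, i.e.\ triviality of the relative Brauer group, which follows from quasi-finiteness of $k$; this is a different cohomological input than the paper's auxiliary lemma (which is Hilbert~90, $H^1=0$, used only for the uniqueness remark). Your argument is shorter and avoids the case split entirely; what the paper's explicit route buys is the concrete form of $f$ that is needed downstream, in particular for the normal forms in Table~\ref{table1} and the adjoint-map computations of Section~\ref{sec4}.

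Two small cautions. First, the twist bookkeeping establishing $c_0\in k_n^{\ast}$ is correct when carefully unwound, using the semi-linearity $\tilde f_0\circ\rho(\lambda)=\rho(\Si_d^{-n}(\lambda))\circ\tilde f_0$ and the fact that the twist functor leaves the underlying linear maps unchanged; as written your sentence is terse and the sign of the twist should be pinned down, though it does not affect the conclusion. Second, the companion-block reduction in point~(f) is only asserted as ``easy to see'' in the paper; a self-contained writeup along your lines would need to prove it, whereas the paper's explicit matrices verify it implicitly by construction.
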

\begin{proof}
For $x_1,\,x_2\,\cdots,\,x_n\in k_d$ and $1\leq l\leq n$, let $N_l\in\M_{n\times n}(k_d)$ be a matrix such that $(N_l)_{ij}=\left\{\begin{array}{ll}\D_{i+1,\,j}&\mbox{if }i<n\\\D_{1,\,j}x_l&\mbox{if }i=n\end{array}\right.$, 
$$N_l=\left[\begin{array}{ccccc}0&1&0&\cdots&0\\ 0&0&1&\cdots&0\\\vdots&\vdots&\vdots&\ddots&\vdots\\0&0&0&\cdots&1\\x_l&0&0&\cdots&0\end{array}\right].$$

Then, it is easy to see that
$$N_n\cdots N_2N_1=\diag(x_1,\,x_2,\cdots,\,x_n).$$
Let $X,\,Y,\,Z,\,W\in\M_{n\times n}(k_d)$ be matrices such that
$$\begin{array}{ll}(X)_{ij}\coloneqq \left\{\begin{array}{ll}\D_{i+1,\,j}&\mbox{if }i<n\\\D_{1,\,j}x&\mbox{if }i=n\end{array}\right.&(Y)_{ij}\coloneqq \left\{\begin{array}{ll}y&\mbox{if }i=n,\,j=1\\0&\mbox{otherwise}\end{array}\right.\\
(Z)_{ij}\coloneqq \left\{\begin{array}{ll}z&\mbox{if }i=n,\,j=1\\0&\mbox{otherwise}\end{array}\right.&(W)_{ij}\coloneqq \left\{\begin{array}{ll}\D_{i+1,\,j}&\mbox{if }i<n\\\D_{1,\,j}w&\mbox{if }i=n\end{array}\right.,
\end{array}$$
with $x,\,y,\,z,\,w\in k_d$. Take $\left[\begin{array}{cc}X&Y\\Z&W\end{array}\right]\in\M_{2n}(k_d)$. Then, it is easy to see that
\[
\begin{split}
\Si_d^{n-1}\left(\left[\begin{array}{cc}X&Y\\Z&W\end{array}\right]\right)\cdots\Si_d\left(\left[\begin{array}{cc}X&Y\\Z&W\end{array}\right]\right)\left[\begin{array}{cc}X&Y\\Z&W\end{array}\right]=\\\left[\begin{array}{cc}\diag(x,\,\Si_d(x),\cdots,\,\Si_d^{n-1}(x))&\diag(y,\,\Si_d(y),\cdots,\,\Si_d^{n-1}(y))\\\diag(z,\,\Si_d(z),\cdots,\,\Si_d^{n-1}(z))&\diag(w,\,\Si_d(w),\cdots,\,\Si_d^{n-1}(w))\end{array}\right]
\end{split}
\]
Now, we have to work with separate cases according to the value of $n$ ($\modu 4$):
\begin{enumerate}
\item Let $n\equiv1(\modu2)$. Then, $d=4n$. Since $s\in k_n$ thus, $t^{-1}\in k_{2n}$ with $t=\frac{1}{2}+s\E^\frac{1}{2}$. We have from Lemma \ref{cuadrado} that $t^{-1}=r^2$ or $t^{-1}=r^2\E^\frac{2}{d}$ for some $r\in k_{2n}$. Consider $a=\sqrt{i}r$ or $a=\sqrt{i}r\E^\frac{1}{d}$. Then, $a\Si_d^{2n}(a)=it^{-1}$ or $a\Si_d^{2n}(a)=-it^{-1}$. Note that in both cases, $-a\Si_d^n(a)\Si_d^{2n}(a)\Si_d^{3n}(a)t\Si_d^n(t)=1$.
\begin{enumerate}
\item Let $n\equiv 1(\modu4)$ be satisfied and $f\colon M\lra{}^{\G_d}M$ be the isomorphism defined by
$$f=\left(\begin{array}{c}A_1\\A_2\\A_3\\A_4\\\left[\begin{array}{cc}X&Y\\Z&W\end{array}\right]\end{array}\right),$$
with the matrices $A_1,\,A_2,\,A_3,\,A_4,\,X,\,Y,\,Z,\,W\in\M_{n\times n}(k_d)$ given by:
$$\begin{array}{ll}(A_1)_{ij}=\left\{\begin{array}{ll}\D_{i+1,\,j}&\mbox{if }i<n\\\D_{1,\,j}(-a)&\mbox{if }i=n\end{array}\right.,&(A_2)_{ij}=\left\{\begin{array}{ll}\D_{i+1,\,j}&\mbox{if }i<n\\\D_{1,\,j}at&\mbox{if }i=n\end{array}\right.,\\
&\\
(A_3)_{ij}=\left\{\begin{array}{ll}\D_{i+1,\,j}&\mbox{if }i<n\\\D_{1,\,j}at&\mbox{if }i=n\end{array}\right.,& (A_4)_{ij}=\left\{\begin{array}{ll}\D_{i+1,\,j}&\mbox{if }i<n\\\D_{1,\,j}a&\mbox{if }i=n\end{array}\right.,
\\
&\\
(X)_{ij}=\left\{\begin{array}{ll}\D_{i+1,\,j}&\mbox{if }i<n\\\D_{1,\,j}a&\mbox{if }i=n\end{array}\right.,&(Y)_{ij}=\left\{\begin{array}{ll}at&\mbox{if }i=n,\,j=1\\0&\mbox{otherwise}\end{array}\right.,\\
&\\
(Z)_{ij}=\left\{\begin{array}{ll}-a&\mbox{if }i=n,\,j=1\\0&\mbox{otherwise}\end{array}\right.,&(W)_{ij}=\left\{\begin{array}{ll}\D_{i+1,\,j}&\mbox{if }i<n\\\D_{1,\,j}0&\mbox{if }i=n\end{array}\right.,\end{array}$$
then $^{\G_d^{n-1}}f\circ\cdots\circ^{\G_d}f\circ f$ is equal to
\begin{eqnarray*}
&&{\scriptstyle\left(\begin{array}{c}\Si_d^{n-1}(A_1)\Si^{n-2}_d(A_4)\cdots\Si^4_d(A_1)\Si^3_d(A_4)\Si^2_d(A_3)\Si_d(A_2)A_1\\\Si_d^{n-1}(A_2)\Si^{n-2}_d(A_1)\cdots\Si^4_d(A_2)\Si^3_d(A_1)\Si^2_d(A_4)\Si_d(A_3)A_2\\\Si_d^{n-1}(A_3)\Si^{n-2}_d(A_2)\cdots\Si^4_d(A_3)\Si^3_d(A_2)\Si^2_d(A_1)\Si_d(A_4)A_3\\\Si_d^{n-1}(A_4)\Si^{n-2}_d(A_3)\cdots\Si^4_d(A_4)\Si^3_d(A_3)\Si^2_d(A_2)\Si_d(A_1)A_4\\\Si_d^{n-1}\left(\left[\begin{array}{cc}X&Y\\Z&W\end{array}\right]\right)\cdots\Si_d\left(\left[\begin{array}{cc}X&Y\\Z&W\end{array}\right]\right)\left[\begin{array}{cc}X&Y\\Z&W\end{array}\right]\end{array}\right)}\\
&=&{\scriptstyle\left(\begin{array}{c}\diag(-a,\,\Si_d(at),\,\Si^2_d(at),\,\cdots,\,\Si^{n-2}_d(a),\,\Si^{n-1}_d(-a))\\\diag(at,\,\Si_d(at),\,\Si^2_d(a),\,\cdots,\,\Si^{n-2}_d(-a),\,\Si^{n-1}_d(at))\\\diag(at,\,\Si_d(a),\,\Si^2_d(-a),\,\cdots,\,\Si^{n-2}_d(at),\,\Si^{n-1}_d(at))\\\diag(a,\,\Si_d(-a),\,\Si^2_d(at),\,\cdots,\,\Si^{n-2}_d(at),\,\Si^{n-1}_d(a))\\\left[\begin{array}{cc}\diag(a,\cdots,\,\Si_d^{n-1}(a))&\diag(at,\cdots,\,\Si_d^{n-1}(at))\\\diag(-a,\,\cdots,\,\Si_d^{n-1}(-a))&0_n\end{array}\right]\end{array}\right)}
\end{eqnarray*}
$^{\G_d^{2n-1}}f\circ\cdots\circ\,^{\G_d^{n+1}}f\circ \,^{\G_d^n}f=$
\begin{eqnarray*}
\left(\begin{array}{c}\diag(\Si_d^n(at),\,\Si_d^{n+1}(at),\,\cdots,\,\Si^{2n-2}_d(-a),\,\Si^{2n-1}_d(at))\\\diag(\Si_d^n(at),\,\Si_d^{n+1}(a),\,\cdots,\,\Si^{2n-2}_d(at),\,\Si^{2n-1}_d(at))\\\diag(\Si_d^n(a),\,\Si_d^{n+1}(-a),\,\cdots,\,\Si^{2n-2}_d(at),\,\Si^{2n-1}_d(a))\\\diag(\Si_d^n(-a),\,\Si_d^{n+1}(at),\,\cdots,\,\Si^{2n-2}_d(a),\,\Si^{2n-1}_d(-a))\\\left[\begin{array}{cc} \diag(\Si_d^n(a),\cdots,\,\Si_d^{2n-1}(a))&\diag(\Si_d^n(at),\cdots,\,\Si_d^{2n-1}(at))\\\diag(\Si_d^n(-a),\,\cdots,\,\Si_d^{2n-1}(-a))&0_n\end{array}\right]\end{array}\right),
\end{eqnarray*}
$^{\G_d^{3n-1}}f\circ\cdots\circ^{\G_d^{2n+1}}f\circ \,^{\G_d^{2n}}f=$
\begin{eqnarray*}
\left(\begin{array}{c}\diag(\Si_d^{2n}(at),\,\Si_d^{2n+1}(a),\,\cdots,\,\Si^{3n-2}_d(at),\,\Si^{3n-1}_d(at))\\\diag(\Si_d^{2n}(a),\,\Si_d^{2n+1}(-a),\,\cdots,\,\Si^{3n-2}_d(at),\,\Si^{3n-1}_d(a))\\\diag(\Si_d^{2n}(-a),\,\Si_d^{2n+1}(at),\,\cdots,\,\Si^{3n-2}_d(a),\,\Si^{3n-1}_d(-a))\\\diag(\Si_d^{2n}(at),\,\Si_d^{2n+1}(at),\,\cdots,\,\Si^{3n-2}_d(-a),\,\Si^{3n-1}_d(at))\\\left[\begin{array}{cc} \diag(\Si_d^{2n}(a),\cdots,\,\Si_d^{3n-1}(a))&\diag(\Si_d^{2n}(at),\cdots,\,\Si_d^{3n-1}(at))\\\diag(\Si_d^{2n}(-a),\,\cdots,\,\Si_d^{3n-1}(-a))&0_n\end{array}\right]\end{array}\right),
\end{eqnarray*}
$^{\G_d^{4n-1}}f\circ\cdots\circ^{\G_d^{3n+1}}f\circ \,^{\G_d^{3n}}f=$
\begin{eqnarray*}
\left(\begin{array}{c}\diag(\Si_d^{3n}(a),\,\Si_d^{3n+1}(-a),\,\cdots,\,\Si^{4n-2}_d(at),\,\Si^{4n-1}_d(a))\\\diag(\Si_d^{3n}(-a),\,\Si_d^{3n+1}(at),\,\cdots,\,\Si^{4n-2}_d(a),\,\Si^{4n-1}_d(-a))\\\diag(\Si_d^{3n}(at),\,\Si_d^{3n+1}(at),\,\cdots,\,\Si^{4n-2}_d(-a),\,\Si^{4n-1}_d(at))\\\diag(\Si_d^{3n}(at),\,\Si_d^{3n+1}(a),\,\cdots,\,\Si^{4n-2}_d(at),\,\Si^{4n-1}_d(at))\\\left[\begin{array}{cc} \diag(\Si_d^{3n}(a),\cdots,\,\Si_d^{4n-1}(a))&\diag(\Si_d^{3n}(at),\cdots,\,\Si_d^{4n-1}(at))\\\diag(\Si_d^{3n}(-a),\,\cdots,\,\Si_d^{4n-1}(-a))&0_n\end{array}\right]\end{array}\right).
\end{eqnarray*}
After some computations, one can see that $^{\G_d^{4n-1}}f\circ\cdots\circ\,^{\G_d}f\circ f$ is equal to the diagonal matrix $\diag(d_1,\cdots,\,d_{4n-1})$ with 
$$d_j\Si_d^j(-a\Si_d^n(at)\Si_d^{2n}(at)\Si_d^{3n}(a))=\Si_d^j(-a\Si_d^n(a)\Si_d^{2n}(a)\Si_d^{3n}(a)t\Si_d^n(t)).$$ 
 But we saw at the beginning  of case a) that $-a\Si_d^n(a)\Si_d^{2n}(a)\Si_d^{3n}(a)t\Si_d^n(t)=1$. Therefore $^{\G_d^{4n-1}}f\circ\cdots\circ\,^{\G_d}f\circ f$ is the identity. 
\item Let $n\equiv 3(\modu4)$. Let $f\colon M\lra^{\G_d}M$ be the isomorphism defined by
$$f=\left(\begin{array}{c}A_1\\A_2\\A_3\\A_4\\\left[\begin{array}{cc}X&Y\\Z&W\end{array}\right]\end{array}\right),$$
with $A_1,\,A_2,\,A_3,\,A_4,\,X,\,Y,\,Z,\,W\in\M_{n\times n}(k_d)$ described by:
$$\begin{array}{ll}(A_1)_{ij}=\left\{\begin{array}{ll}\D_{i+1,\,j}&\mbox{if }i<n\\\D_{1,\,j}a\Si_d^n(t)&\mbox{if }i=n\end{array}\right.,&(A_2)_{ij}=\left\{\begin{array}{ll}\D_{i+1,\,j}&\mbox{if }i<n\\\D_{1,\,j}(-a)\Si_d^n(t)&\mbox{if }i=n\end{array}\right.,\\
&\\
(A_3)_{ij}=\left\{\begin{array}{ll}\D_{i+1,\,j}&\mbox{if }i<n\\\D_{1,\,j}a&\mbox{if }i=n\end{array}\right.,& (A_4)_{ij}=\left\{\begin{array}{ll}\D_{i+1,\,j}&\mbox{if }i<n\\\D_{1,\,j}a&\mbox{if }i=n\end{array}\right.,\\
&\\
(X)_{ij}=\left\{\begin{array}{ll}\D_{i+1,\,j}&\mbox{if }i<n\\\D_{1,\,j}0&\mbox{if }i=n\end{array}\right.,&(Y)_{ij}=\left\{\begin{array}{ll}a\Si_d^n(t)&\mbox{if }i=n,\,j=1\\0&\mbox{otherwise}\end{array}\right.,\\
&\\
(Z)_{ij}=\left\{\begin{array}{ll}a&\mbox{if }i=n,\,j=1\\0&\mbox{otherwise}\end{array}\right.,&(W)_{ij}=\left\{\begin{array}{ll}\D_{i+1,\,j}&\mbox{if }i<n\\\D_{1,\,j}a&\mbox{if }i=n.\end{array}\right.\end{array}$$
The process to show $^{\G_d^{4n-1}}f\circ\cdots\circ\,^{\G_d}f\circ f$ is the identity, is analogous to that (i) and will be omitted here.
\end{enumerate}
\item Let $n\equiv2(\modu4)$. Then, $d=\mcm(4,n)=2n$. Let $t=\frac{1}{2}+s\E^\frac{1}{2}$ with $s\in k_n$.  Thus, $(t(1-t))^{-1}=(\frac{1}{4}-s^2\E)^{-1}\in k_{n}$. We have from Lemma \ref{cuadrado} that $(t-t^2)^{-1}=r^2$ or $(t-t^2)^{-1}=r^2\E^\frac{1}{n}$ for some $r\in k_{n}$. 

If we take $a=ir$ or $a=r\E^\frac{1}{d}$, then $-a\Si_d^{n}(a)=(t-t^2)^{-1}$. Thus, $-a\Si_d^n(a)t(1-t)$ is equal to $1$.
Let $f\colon M\lra^{\G_d}M$ be the isomorphism defined by
$$f=\left(\begin{array}{c}A_1\\A_2\\A_3\\A_4\\\left[\begin{array}{cc}X&Y\\Z&W\end{array}\right]\end{array}\right),$$
with $A_1,\,A_2,\,A_3,\,A_4,\,X,\,Y,\,Z,\,W\in\M_{n\times n}(k_d)$ described by:
$$\begin{array}{ll}(A_1)_{ij}=\left\{\begin{array}{ll}\D_{i+1,\,j}&\mbox{if }i<n\\\D_{1,\,j}a(1-t)&\mbox{if }i=n\end{array}\right.,& (A_2)_{ij}=\left\{\begin{array}{ll}\D_{i+1,\,j}&\mbox{if }i<n\\\D_{1,\,j}(-a)(t-t^2)&\mbox{if }i=n\end{array}\right.,\\
&\\
(A_3)_{ij}=\left\{\begin{array}{ll}\D_{i+1,\,j}&\mbox{if }i<n\\\D_{1,\,j}(-at)&\mbox{if }i=n\end{array}\right.,& (A_4)_{ij}=\left\{\begin{array}{ll}\D_{i+1,\,j}&\mbox{if }i<n\\\D_{1,\,j}a&\mbox{if }i=n\end{array}\right.,\\
&\\
(X)_{ij}=\left\{\begin{array}{ll}\D_{i+1,\,j}&\mbox{if }i<n\\\D_{1,\,j}(-at)&\mbox{if }i=n\end{array}\right.,&(Y)_{ij}=\left\{\begin{array}{ll}-at&\mbox{if }i=n,\,j=1\\0&\mbox{otherwise}\end{array}\right.,\\
&\\
(Z)_{ij}=\left\{\begin{array}{ll}a&\mbox{if }i=n,\,j=1\\0&\mbox{otherwise}\end{array}\right.,&(W)_{ij}=\left\{\begin{array}{ll}\D_{i+1,\,j}&\mbox{if }i<n\\\D_{1,\,j}at&\mbox{if }i=n.\end{array}\right.\end{array}$$
As above, after some computation one can see that $^{\G_d^{2n-1}}f\circ\cdots\circ\,^{\G_d}f\circ f$ is equal to the diagonal matrix with entries $\Si_d^j(-a\Si_d^n(a)t(1-t))$  for $1\leq j\leq 2n-1$. Therefore $^{\G_d^{2n-1}}f\circ\cdots\circ\,^{\G_d}f\circ f$ is the identity.
\item Let $n\equiv0(\modu4)$ be satisfied and $f\colon M\lra^{\G_d}M$ be the isomorphism defined by
$$f=\left(\begin{array}{c}A_1\\A_1\\A_1\\A_1\\\left[\begin{array}{cc}A_3&A_2\\A_2&A_3\end{array}\right]\end{array}\right),$$
with $A_1,\,A_2,\,A_3\in\M_{n\times n}(k_d)$ described by:
$$\begin{array}{ll}(A_1)_{ij}=\left\{\begin{array}{ll}\D_{i+1,\,j}&\mbox{if }i<n\\\D_{1,\,j}&\mbox{if }i=n\end{array}\right., &(A_2)_{ij}=\left\{\begin{array}{ll}1&\mbox{if }i=n,\,j=1\\0&\mbox{otherwise}\end{array}\right.,\\
(A_3)_{ij}=\left\{\begin{array}{ll}\D_{i+1,\,j}&\mbox{if }i<n\\\D_{1,\,j}0&\mbox{if }i=n.\end{array}\right.&\end{array}$$
It is easy to see that $^{\G_d^{d-1}}f\circ\cdots\circ\,^{\G_d}f\circ f$ is the identity.
\end{enumerate}
\end{proof}
Finally, we have two ii-simple regular representations that we have not yet mentioned. They are the following:
\begin{center}
 \begin{picture}(170,105)
 \put(0,49){\makebox(0,0){$N_1=$}}
 \put(10,0){\makebox(70,95){
\begin{tikzpicture}
[->,>=stealth',shorten >=1pt,auto,node distance=2cm,thick,main node/.style=]
  \node[main node] (1) {$0$};   
  \node[main node] (2) [below right of=1] {$K$};
  \node[main node] (3) [below left of=2] {$K$};
  \node[main node] (4) [above right of=2] {$K$};
  \node[main node] (5) [below right of=2] {$0$};
\path[every node/.style={font=\sffamily\small}]     
(2) edge node   {} (1)         
    edge node         {1} (4)             
  edge node  {1} (3)              
   edge node         {} (5)
; 
\end{tikzpicture}}} 
 \put(100,49){\makebox(0,0){and}}
 \put(125,49){\makebox(0,0){$N_2=$}}
 \put(135,0){\makebox(70,95){
\begin{tikzpicture}
[->,>=stealth',shorten >=1pt,auto,node distance=2cm,thick,main node/.style=]
  \node[main node] (1) {$0$};   
  \node[main node] (2) [below right of=1] {$K$};
  \node[main node] (3) [below left of=2] {$0$};
  \node[main node] (4) [above right of=2] {$K$};
  \node[main node] (5) [below right of=2] {$K$,};
\path[every node/.style={font=\sffamily\small}]     
(2) edge node   {} (1)         
    edge node         {1} (4)             
  edge node  {} (3)              
   edge node         {1} (5)
; 
\end{tikzpicture}}} 
\end{picture}
 \end{center}
with $K=k_4$. Because the morphisms are the identity or the zero map, we have that the  corresponding $\G\coloneqq  \G_4$ only swaps the  ``arms'' in each representation. Then,
\begin{center}
 \begin{picture}(70,95)
  \put(-15,49){\makebox(0,0){$\,^\G N_1=$}}
 \put(10,0){\makebox(70,95){
\begin{tikzpicture}
[->,>=stealth',shorten >=1pt,auto,node distance=2cm,thick,main node/.style=]
  \node[main node] (1) {$K$};   
  \node[main node] (2) [below right of=1] {$K$};
  \node[main node] (3) [below left of=2] {$0$};
  \node[main node] (4) [above right of=2] {$0$};
  \node[main node] (5) [below right of=2] {$K$};
\path[every node/.style={font=\sffamily\small}]     
(2) edge node   {1} (1)         
    edge node         {} (4)             
  edge node  {} (3)              
   edge node         {1} (5)
; 
\end{tikzpicture}}} \end{picture}
 \end{center}
 \vspace{0.5cm}
 and
 
 \begin{center}
  \begin{picture}(300,105)
  \put(0,49){\makebox(0,0){$\,^\G N_2=$}}
 \put(15,0){\makebox(70,95){\begin{tikzpicture}
[->,>=stealth',shorten >=1pt,auto,node distance=2cm,thick,main node/.style=]
  \node[main node] (1) {$K$};   
  \node[main node] (2) [below right of=1] {$K$};
  \node[main node] (3) [below left of=2] {$0$};
  \node[main node] (4) [above right of=2] {$K$};
  \node[main node] (5) [below right of=2] {$0$};
\path[every node/.style={font=\sffamily\small}]     
(2) edge node   {1} (1)         
    edge node         {1} (4)             
  edge node  {} (3)              
   edge node         {} (5)
; 
\end{tikzpicture}}}
 \put(125,49){\makebox(0,0){$\,^{\G^2} N_2=$}}
 \put(140,0){\makebox(70,95){ 
 \begin{tikzpicture}
[->,>=stealth',shorten >=1pt,auto,node distance=2cm,thick,main node/.style=]
  \node[main node] (1) {$K$};   
  \node[main node] (2) [below right of=1] {$K$};
  \node[main node] (3) [below left of=2] {$K$};
  \node[main node] (4) [above right of=2] {$0$};
  \node[main node] (5) [below right of=2] {$0$};
\path[every node/.style={font=\sffamily\small}]     
(2) edge node   {1} (1)         
    edge node         {} (4)             
  edge node  {1} (3)              
   edge node         {} (5)
; 
\end{tikzpicture}}}
\put(240,49){\makebox(0,0){$\,^{\G^3} N_2=$}}
 \put(255,0){\makebox(70,95){
 \begin{tikzpicture}
[->,>=stealth',shorten >=1pt,auto,node distance=2cm,thick,main node/.style=]
  \node[main node] (1) {$0$};   
  \node[main node] (2) [below right of=1] {$K$};
  \node[main node] (3) [below left of=2] {$K$};
  \node[main node] (4) [above right of=2] {$0$};
  \node[main node] (5) [below right of=2] {$K$.};
\path[every node/.style={font=\sffamily\small}]     
(2) edge node   {} (1)         
    edge node         {} (4)             
  edge node  {1} (3)              
   edge node         {1} (5)
; 
\end{tikzpicture}}}
 \end{picture}
\end{center}
Clearly, $^{\G^2}N_1=N_1$ and $^{\G^4}N_2=N_2$. Therefore, $N_1\oplus\,^{\G}N_1\mbox{ and }N_2\oplus\,^{\G}N_2\oplus\,^{\G^2}N_2\oplus\,^{\G^3}N_2$ are ii-indecomposable.
\begin{prop} \label{prop2.2}Let $M$ be equal to $N_1\oplus\,^{\G}N_1\mbox{ or }M=N_2\oplus\,^{\G}N_2\oplus\,^{\G^2}N_2\oplus\,^{\G^3}N_2$. Then, there exists an isomorphism $f\colon M\lra\,^{\G}M$ such that the automorphism $\,^{\G}f\circ f$ or $\,^{\G^{2}}f\circ\,^{\G^{2}}f\circ\,^{\G}f\circ f$ of M is the identity, respectively.
\end{prop}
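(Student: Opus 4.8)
The plan is to reduce, exactly as in Section~\ref{sec3}~f), the construction of the required isomorphism to the choice of a single morphism $\tilde f$ from a $\G$-power of the relevant brick to that brick, and then to observe that for $N_1$ and $N_2$ one may take $\tilde f=\id$. In particular, in contrast with the preceding Proposition, no norm equation needs to be solved.

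First I would record the structural facts about $N_1$ and $N_2$ that make this reduction applicable. Both are bricks (one checks directly, or notes that their dimension vectors over the vertices $(0,1,2,3,4)$ of Figure~\ref{figd4} are the real roots $(1,1,0,1,0)$ and $(1,1,1,0,0)$ below the null root), hence regular simple with $\End_{k_4\widetilde{\mathsf{D}}_4}(N_i)\cong k_4$. From the pictures displayed just before the statement, $\G$ acts on the four arms of $\widetilde{\mathsf{D}}_4$ as a $4$-cycle, so $\G^{2}$ stabilises the support of $N_1$ (interchanging its two nonzero arms) and $\G^{4}$ stabilises that of $N_2$; since every structure map of $N_1$ and of $N_2$ is $0$ or $\id$ and $\Si_4$ fixes the scalars $0$ and $1$, the rule $\G_d(k\rho)=\Si_d(k)\G(\rho)$ gives the \emph{literal} equalities $\,^{\G^{2}}N_1=N_1$ and $\,^{\G^{4}}N_2=N_2$ of $k_4\widetilde{\mathsf{D}}_4$-modules, whereas $\,^{\G}N_1\not\cong N_1$ and $\,^{\G^{i}}N_2\not\cong N_2$ for $i=1,2,3$ (the supports differ). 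Thus $N=N_1$ falls under Section~\ref{sec3}~f) with $d=4$, $m=2$, $k=\tfrac{d}{m}-1=1$, and $N=N_2$ with $d=4$, $m=4$, $k=0$.

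Next I would apply Section~\ref{sec3}~f): producing $f\colon M\lra\,^{\G}M$ whose $\G$-twisted $d$-fold composite is $\id_M$ is equivalent to producing $\tilde f\colon\,^{\G^{m}}N\lra N$ with $\tilde f\circ\,^{\G^{m}}\tilde f\circ\cdots\circ\,^{\G^{km}}\tilde f=\id_N$. Since $\,^{\G^{m}}N=N$ in both cases, I take $\tilde f=\id_N$: for $N_1$ this reads $\tilde f\circ\,^{\G^{2}}\tilde f=\id_{N_1}$, which holds because the autoequivalence $\,^{\G^{2}}$ is the identity on linear maps and on $N_1$; for $N_2$ the requirement is the single equality $\tilde f=\id_{N_2}$. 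Unwinding the block ``companion-matrix'' description of $f$ in terms of $\tilde f$ from Section~\ref{sec3}~f), the resulting $f$ is given vertex by vertex by identity matrices identifying the one-dimensional (over $k_4$) vertex spaces of $M$ with those of $\,^{\G}M$; it is a $k_4\widetilde{\mathsf{D}}_4$-module map precisely because all structure maps in sight are $0$ or $\id$, and its $\G$-twisted $d$-fold composite is block-diagonal over the $m$ summands $\,^{\G^{j}}N$ with blocks the $(d/m)$-fold twisted composites of $\tilde f=\id$, hence equals $\id_M$. Since moreover $\,^{\G^{2}}M=M$ in the first case and $\,^{\G^{4}}M=M$ in the second, this yields the sharper identities $\,^{\G}f\circ f=\id_M$ and $\,^{\G^{3}}f\circ\,^{\G^{2}}f\circ\,^{\G}f\circ f=\id_M$ asserted in the proposition.

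The step I expect to require the most care is the bookkeeping of the $\G$-action: tracking how the $4$-cycle permutation of the arms of $\widetilde{\mathsf{D}}_4$ reshuffles the summands $\,^{\G^{j}}N_i$ of $M$, and verifying that the ``identity'' blocks really do intertwine the permuted structure maps. By contrast there is no analytic content: unlike the preceding Proposition, whose proof needed a scalar $a$ solving a norm equation via Lemma~\ref{cuadrado}, the only norm condition arising here is $N_{d,\,m}(1)=1$, which is automatic.
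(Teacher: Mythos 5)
Your proof is correct and lands on essentially the same $f$ as the paper's, which simply writes down the permutation matrix at the central vertex $0$ (a swap of the two copies of $K$ for $N_1$, and a $4$-cycle for $N_2$) together with identities on the arm vertices and leaves the verification to the reader. You arrive at the same map by routing through the companion-matrix reduction of Section~\ref{sec3}~f), observing that the literal equalities $\,^{\G^2}N_1=N_1$ and $\,^{\G^4}N_2=N_2$ (valid because all structure maps are $0$ or $\id$ and $\Si_4$ fixes those scalars) let one take $\tilde f=\id$, since the norm condition $N_{d,m}(1)=1$ is automatic. This is a slightly more systematic derivation of the same $f$, making explicit the contrast with the preceding Proposition (where a genuine norm equation had to be solved via Lemma~\ref{cuadrado}), but there is no substantive difference in method.
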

\begin{proof}
It is easy to verify that
$$f=\left(\begin{array}{c}\id_K\\\id_K\\\id_K\\\id_K\\\left[\begin{array}{cc}0&1\\1&0\end{array}\right]\end{array}\right)\mbox{, }\,f=\left(\begin{array}{c}\left[\begin{array}{cc}1&0\\0&1\end{array}\right]\\\left[\begin{array}{cc}1&0\\0&1\end{array}\right]\\\left[\begin{array}{cc}1&0\\0&1\end{array}\right]\\\left[\begin{array}{cc}1&0\\0&1\end{array}\right]\\\left[\begin{array}{cccc}0&1&0&0\\0&0&1&0\\0&0&0&1\\1&0&0&0\end{array}\right]\end{array}\right),$$
has the desired property, for $M=N_1\oplus\,^{\G}N_1\mbox{ and }M=N_2\oplus\,^{\G}N_2\oplus\,^{\G^2}N_2\oplus\,^{\G^3}N_2$, respectively.
\end{proof}
\begin{prop}\label{prop2.3}
Let $\varphi_M\colon K\otimes_k k^{2n}\lra K^n$ be a simple regular representation. If $X$ is a  representation of  $\widetilde{\mathsf{D}}_4$ such that $k_d\otimes_k M\cong X$, then there  exists an isomorphism $f\colon X\lra\,^{\G_d}X$ such that the automorphism $^{\G_d^{m-1}}f\circ\cdots\circ^{\G_d}f\circ f$ of $X$ is the identity, with $m=\left\{\begin{array}{ll}d&\mbox{if }\End(M)\cong k_n\\2n&\mbox{if }\End(M)\cong k_{2n}\end{array}\right.$
and $d=\mcm(4,n)$.
\end{prop}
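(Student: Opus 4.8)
The plan is to reduce the statement, by a case distinction governed by $\End_\Lambda(M)$, to the explicit descent data already constructed in Section~\ref{Invariantrep}: the Proposition there treating the homogeneous families $\bigoplus_{j=0}^{n-1}\,^{\G_d^j}M_{\frac{1}{2}+s\E^{\frac{1}{2}}}$, and Proposition~\ref{prop2.2} treating the two exceptional families built from $N_1$ and from $N_2$. Since the conclusion involves $X$ only through its isomorphism class, and $X\cong k_d\otimes_k M$, it suffices to recognize $k_d\otimes_k M$ as one of these three ii-simple regular $k_d\widetilde{\mathsf{D}}_4$-modules and to read off the prescribed cocycle length $m$ in each case.

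First I would use that $k=\C\parE$ is quasi-finite to write $\End_\Lambda(M)\cong k_{m'}$ for some $m'\in\Z_{>0}$, and set $d'\coloneqq\mcm(4,m')$. By the Galois-descent facts recalled at the beginning of Section~\ref{sec3} (skew group algebras, ii-representations, and compatibility of $\tau$ and of $\End$ with extension of scalars), $k_{d'}\otimes_k M$ is ii-simple regular, isomorphic to $\bigoplus_{j=0}^{m'-1}\,^{\G_{d'}^j}N$ for a regular simple $k_{d'}\widetilde{\mathsf{D}}_4$-module $N$ with $\End_{k_{d'}\widetilde{\mathsf{D}}_4}(N)\cong k_{d'}$, $\,^{\G_{d'}^{m'}}N\cong N$ and $\,^{\G_{d'}^{i}}N\not\cong N$ for $0<i<m'$. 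By the classification of such modules carried out in Section~\ref{Invariantrep}, $N$ is isomorphic to $M_{\frac{1}{2}+s\E^{\frac{1}{2}}}$ for a generic $s\in k_{m'}$, or to $N_1$ (so that $d'=4$ and $m'=2$), or to $N_2$ (so that $d'=4$ and $m'=4$).

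Next I would fix the numerology with a dimension count. From $\underline{\dim}_k M=(n,2n)$ and $\dim_k K=4$ one gets that, over $k_{d'}$, the representation $k_{d'}\otimes_k M$ has dimension $2n$ at the branch vertex of $\widetilde{\mathsf{D}}_4$ and dimension $n$ at each of its four arms. Comparing this with the dimension vector of $\bigoplus_{j=0}^{m'-1}\,^{\G_{d'}^j}N$ in the three cases forces: in the homogeneous case $m'=n$, whence $\End_\Lambda(M)\cong k_n$ and $d'=\mcm(4,n)=d$; for $N=N_1$ it forces $n=1$, whence $\End_\Lambda(M)\cong k_2=k_{2n}$ and $d'=4=\mcm(4,1)=d$; for $N=N_2$ it forces $n=2$, whence $\End_\Lambda(M)\cong k_4=k_{2n}$ and $d'=4=\mcm(4,2)=d$. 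Thus $d'=d$ in every case, $X\cong k_d\otimes_k M$ is, up to isomorphism, exactly the ii-simple regular module just pinned down, and the value of $m$ in the statement is the one attached to the case at hand.

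Finally, replacing $X$ by that ii-simple regular module --- legitimate since the conclusion depends on $X$ only up to isomorphism --- I would simply invoke the relevant earlier result: in the homogeneous case the homogeneous-family Proposition of Section~\ref{Invariantrep} gives $f\colon X\to\,^{\G_d}X$ with $\,^{\G_d^{d-1}}f\circ\cdots\circ\,^{\G_d}f\circ f=\id_X$, which is the claim for $m=d$; for $X\cong N_1\oplus\,^{\G_d}N_1$, Proposition~\ref{prop2.2} gives $f$ with $\,^{\G_d}f\circ f=\id_X$, the claim for $m=2=2n$; and for $X\cong N_2\oplus\,^{\G_d}N_2\oplus\,^{\G_d^2}N_2\oplus\,^{\G_d^3}N_2$, Proposition~\ref{prop2.2} gives $f$ with $\,^{\G_d^3}f\circ\,^{\G_d^2}f\circ\,^{\G_d}f\circ f=\id_X$, the claim for $m=4=2n$. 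The genuinely hard content --- exhibiting an $f$ that actually fulfills the cocycle identity --- is the substance of those two earlier propositions, which I would not redo; within this proof the only real work is the bookkeeping above, and the delicate point there is the verification that the parameter $d=\mcm(4,n)$ used throughout Section~\ref{Invariantrep} coincides with the invariant $\mcm(4,\dim_k\End_\Lambda(M))$ actually governing the Galois descent.
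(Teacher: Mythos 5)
Your proof is correct but follows a genuinely different route from the paper's. The paper avoids the classification entirely: since $\Si_d|_K=\Si$, the arm maps $R,\Si^3(R),\Si^2(R),\Si(R)$ of $k_d\otimes_k M$ are cyclically permuted by $\G_d$, so $k_d\otimes_k M$ is \emph{equal} (not merely isomorphic) to $\,^{\G_d}(k_d\otimes_k M)$; one then takes any isomorphism $g\colon k_d\otimes_k M\to X$, sets $f\coloneqq\,^{\G_d}g\circ g^{-1}$, and obtains the cocycle identity by telescoping, $\,^{\G_d^{m-1}}f\circ\cdots\circ\,^{\G_d}f\circ f=\,^{\G_d^m}g\circ g^{-1}=\id$. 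Your route instead pins $k_d\otimes_k M$ down, via a dimension count and the classification in Section~\ref{Invariantrep}, to one of the three explicit ii-simple regular models and imports the cocycle from the unnamed Proposition on the homogeneous family and from Proposition~\ref{prop2.2}. Both work: the paper's is shorter, case-free, and uses nothing about $X$ beyond $X\cong k_d\otimes_k M$, while yours makes the descent datum explicit and records the numerology $\mcm(4,n)=\mcm(4,\dim_k\End_\Lambda(M))$ that the paper suppresses. The one step you leave implicit is the transport of the cocycle from the chosen model $Y$ to $X$ along an isomorphism $h\colon X\to Y$: the composite becomes $(\,^{\G_d^m}h)^{-1}\circ h$ in place of $\id_X$, and one must observe that this is the identity because $\,^{\G_d^m}Y=Y$ on the nose in all three cases and $\,^{\G_d}$ is the identity on underlying $k$-linear maps --- which is exactly the telescoping step the paper performs directly. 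A sentence making this explicit would make your argument airtight.
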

\begin{proof}Let $M$ be a simple regular representation of $\Lambda$ with dimension vector $(n,\,2n)$. As we saw at the beginning of this section, we can think of $k_d\otimes_k M$ as the following representation of the quiver $\widetilde{\mathsf{D}}_4$:
\begin{center}
 \begin{picture}(100,100)
  \put(85,-3){\makebox(0,0){$  k_d^n$}}
\put(85,30){\makebox(0,0){$  k_d^n$}}
\put(85,62){\makebox(0,0){$  k_d^n$}}
\put(85,97){\makebox(0,0){$  k_d^n$}}
  \put(15,49){\makebox(0,0){$  k_d^{2n}$}}
  \put(42,82){\makebox(0,0){$\scriptstyle R$}}
  \put(54,65){\makebox(0,0){$\scriptstyle\Si^3(R)$}}
  \put(59,45){\makebox(0,0){$\scriptstyle\Si^2(R)$}}
  \put(61,20){\makebox(0,0){$\scriptstyle\Si(R)$}}
  \put(23,58){\vector(4,3){53}}
  \put(23,49){\vector(4,1){53}}
  \put(23,44){\vector(4,-1){53}}
  \put(23,38){\vector(4,-3){53}}
\end{picture}
\end{center}
where $R$ is the matrix of $\varphi_M$ and $\Si=\Si_4$.

Since $\Si_d|_K=\Si$  and $R\in\M_{n\times 2n}(K)$, we have $\Si_d(\Si^j(R))=\Si(\Si^j(R))=\Si^{j+1}(R)$, for $0\leq j\leq 3$. Then,  $k_d\otimes_k M=\,^{\G_d}(k_d\otimes_k M)$.

Now, let $g\colon k_d\otimes_k M\lra X$ be a $k_d$-linear isomorphism of representations of $\widetilde{\mathsf{D}}_4$. Then, $\,^{\G_d}g\colon\,^{\G_d}(k_d\otimes_k M)\lra \,^{\G_d}X$ is also an isomorphism. Thus, 
$$f\coloneqq \,^{\G_d}g\circ g^{-1}\colon X\lra\,^{\G_d}X$$
 is an isomorphism. Therefore, $^{\G_d^{m-1}}f\circ\cdots\circ^{\G_d}f\circ f=\,^{\G_d^{m}}g\circ g^{-1}=\id$ since $^{\G_d^{m}}g=g$.
\end{proof}
%%%%%%%%%%%%%%%%%%%%%%%%%%%%%%%%%%%%%%%%%%%%%%%%%%%%%%%%%%%%%%%%%%%%%%%%%%%%%%%%%%%%%%%%%%%%%%%%%%%%%%%%%%%%%%%%%%%%%%%%%%%%%%%%%%%%%%%%%%%%%%%%%%%%%%%%%%%%%%%%%%%%%%%%%%%%%%%%%%%%%%%%%%%%%%%%%%%%%%%%%%%%%%%%%%%%%%%%%%%
\subsection{A classification of simple regular representations}\label{2.3}
Recall that $K[\A_n]$ is a commutative $K$-algebra. More specifically, we have the following result.
\begin{lema}\label{2.3.1}For $n\in\N$, we have the following isomorphisms of $K$-algebras.
\begin{enumerate}
\item If $n\equiv1(\modu2)$, then $K[\A_n]\cong k_{4n}$.
\item If $n\equiv2(\modu4)$, then $K[\A_n]\cong k_{2n}\times k_{2n}$.
\item If $n\equiv0(\modu4)$, then $K[\A_n]\cong k_{n}\times k_{n}\times k_{n}\times k_{n}$.
\end{enumerate}
\end{lema}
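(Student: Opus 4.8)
The plan is to compute $K[\A_n]$ by reducing it to the classical decomposition of a tensor product of cyclic extensions of $k$ recalled at the beginning of Section~\ref{sec3}. First I would note that the minimal polynomial of a matrix is unchanged under extension of the base field, so the minimal polynomial of $\A_n$ over $K$ equals its minimal polynomial over $k$, which is $y^n-\E$ (as observed just before Lemma~\ref{1.1}). Hence $K[\A_n]\cong K[y]/(y^n-\E)$ as $K$-algebras. Since $y^n-\E$ is irreducible over $k$ — its root $\E^{\frac1n}$ generates the degree-$n$ extension $k_n=k[\E^{\frac1n}]$, which is the content of the first assertion of Lemma~\ref{1.1} — we get $k[y]/(y^n-\E)\cong k_n$, and therefore
$$K[\A_n]\cong K[y]/(y^n-\E)\cong K\otimes_k\bigl(k[y]/(y^n-\E)\bigr)\cong K\otimes_k k_n$$
as $K$-algebras.

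Next, I would use the equality $K=\C\parEc=k_4$, so that $K\otimes_k k_n=k_4\otimes_k k_n$, and invoke the isomorphism a) of Section~\ref{sec3} (the decomposition $k_m\otimes_k k_n\cong k_l\times\cdots\times k_l$ with $l=\mcm(m,n)$ and $\mcd(m,n)$ factors) with $m=4$. This yields a $k_4$-algebra isomorphism
$$K[\A_n]\cong\underbrace{k_l\times\cdots\times k_l}_{g\text{ factors}},\qquad g=\mcd(4,n),\quad l=\mcm(4,n)=\frac{4n}{g}.$$
The three cases of the lemma are then read off directly: for $n\equiv1\,(\modu2)$ one has $g=1$ and $l=4n$, giving $K[\A_n]\cong k_{4n}$; for $n\equiv2\,(\modu4)$ one has $g=2$ and $l=2n$, giving $K[\A_n]\cong k_{2n}\times k_{2n}$; and for $n\equiv0\,(\modu4)$ one has $g=4$ and $l=n$, giving $K[\A_n]\cong k_n\times k_n\times k_n\times k_n$. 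All three are isomorphisms of $K$-algebras because $K=k_4$ and the decomposition a) of Section~\ref{sec3} is an isomorphism of $k_4$-algebras.

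There is no real obstacle in this argument; the only steps that deserve a moment of care are the identification $K[\A_n]\cong K\otimes_k k_n$ and the bookkeeping of $\mcd(4,n)$ and $\mcm(4,n)$ in the three congruence classes. For the first one may argue without minimal polynomials: the multiplication map $K\otimes_k k[\A_n]\to K[\A_n]$, $c\otimes A\mapsto cA$, is a surjective $K$-algebra homomorphism between two $K$-vector spaces of dimension $n$ (the powers $1,\A_n,\dots,\A_n^{n-1}$ are $K$-linearly independent since $\A_n$ has minimal polynomial of degree $n$), hence an isomorphism, and $k[\A_n]\cong k_n$ by Lemma~\ref{1.1}. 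One could also avoid a) entirely and factor $y^n-\E$ over $K$ by Galois theory: a topological generator of $\Gal(\bar K/K)$ sends the root $\zeta_n^{\,j}\E^{\frac1n}$ to $\zeta_n^{\,j+4}\E^{\frac1n}$, so the $n$ roots form $\mcd(4,n)$ Galois orbits each of size $n/\mcd(4,n)$; the corresponding irreducible factors have degree $n/\mcd(4,n)$, and since the unique extension of $K=k_4$ of a given finite degree $e$ is $k_{4e}$, this recovers $K[\A_n]\cong k_l\times\cdots\times k_l$ with $l=4n/\mcd(4,n)$ as above.
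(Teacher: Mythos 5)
Your proof is correct, and it takes a genuinely different route from the paper's. The paper proves the lemma by \emph{exhibiting} an explicit $K$-algebra isomorphism in each of the three congruence classes (e.g.\ $\E^{\frac{1}{4n}}\mapsto\frac{\E^{1/4}}{\E}\A_n^{\frac{3n+1}{4}}$ when $n\equiv1\,(\modu 4)$, an explicit splitting $\A_n\mapsto(\pm\A_{n/2}^{\frac{n+2}{4}}\frac{\E^{1/2}}{\E})$ when $n\equiv2\,(\modu 4)$, and an explicit evaluation map when $4\mid n$), i.e.\ it produces concrete generators and checks they work. You instead reduce the problem to the structural isomorphism $K[\A_n]\cong K\otimes_k k_n$ (using that the minimal polynomial $y^n-\E$ is insensitive to base change, or the clean dimension count you give) and then invoke the already-proved decomposition $k_4\otimes_k k_n\cong k_l\times\cdots\times k_l$ from item a) of Section~\ref{sec3}, reading off the three cases from $\gcd(4,n)$ and $\mathrm{lcm}(4,n)$. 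Your approach is shorter, conceptually uniform across the three cases, and makes transparent \emph{why} the answer has the shape it does; the paper's approach has the minor advantage of producing the isomorphism in closed form, which resonates with the concrete bases ($\B'_{j,l}$, etc.)\ built by hand in the proof of Proposition~\ref{5.1}, though the paper does not literally re-use these formulas. Both are valid; yours is the cleaner derivation.
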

\begin{proof}
\begin{enumerate}
\item Let $n\in\Z_{>0}$ be such that $n\equiv1(\modu2)$. It is enough to take the isomorphism $\E^\frac{1}{4n}\mapsto \frac{\E^\frac{1}{4}}{\E}\A_n^\frac{3n+1}{4}$ and $\E^\frac{1}{4n}\mapsto \frac{\E^\frac{3}{4}}{\E}\A_n^\frac{n+1}{4}$ for $n\equiv 1(\modu 4)$ and  $n\equiv 3(\modu 4)$, respectively.
\item Let $n\in\Z_{>0}$ be such that $n\equiv 2(\modu 4)$. It is enough to show that $K[\A_n]$ is isomorphic to $K[\A_\frac{n}{2}]\times K[\A_\frac{n}{2}]$ because $\frac{n}{2}$ is odd. This is easily done via the isomorphism  $\A_n\mapsto(\A_\frac{n}{2}^\frac{n+2}{4}\frac{\E^\frac{1}{2}}{\E},\,-\A_\frac{n}{2}^\frac{n+2}{4}\frac{\E^\frac{1}{2}}{\E})$. 
\item Let $n\equiv0(\modu4)$. Then, the isomorphism we are looking for  between $K[\A_n]$ and $k_{n}\times k_{n}\times k_{n}\times k_{n}$ is the one that
maps $\A_n$ to $(\zeta_n^4\E^\frac{1}{n},\,\zeta_n^3\E^\frac{1}{n},\,\zeta_n^2\E^\frac{1}{n},\,\zeta_n\E^\frac{1}{n})$.
\end{enumerate}
\end{proof}
%  On the other hand, recall that $\Si_n$ is an automorphism of fields defined by  $\E^\frac{1}{n}\mapsto \zeta_n\E^\frac{1}{n}$, as in Section \ref{quasi1.2}. We have the following  result:

Now, recall that the automorphism $\Si\colon  K\lra K$ is  defined by  $\E^\frac{1}{4}\mapsto i\E^\frac{1}{4}$. Consider $R=\displaystyle\sum_{j=0}^3\A(\nu_j)\E^\frac{j}{4}\in K[\A_n]$. Then, $\Si$ acts on $R$ by $\Si(R)=\displaystyle\sum_{j=0}^3\A(\nu_j)\Si(\E^\frac{j}{4})$.  Thus, it is easy to see that we have the following equalities 
\begin{eqnarray*}
\Si^2(R)-R&=&-2\E^\frac{1}{4}(\A(\nu_1)+\E^\frac{1}{2}\A(\nu_3)),\\
\Si^3(R)-\Si(R)&=&-2i\E^\frac{1}{4}(\A(\nu_1)-\E^\frac{1}{2}\A(\nu_3)),\\
%\end{eqnarray*}
%\begin{eqnarray*}
\Si(R)-R&=&(i-1)\A(\nu_1)\E^\frac{1}{4}-2\A(\nu_2)\E^\frac{1}{2}-(1+i)\A(\nu_3)\E^\frac{3}{4},\\
\Si^3(R)-\Si^2(R)&=&-(i-1)\A(\nu_1)\E^\frac{1}{4}-2\A(\nu_2)\E^\frac{1}{2}+(1+i)\A(\nu_3)\E^\frac{3}{4}.\\
\end{eqnarray*}
Therefore,
\begin{eqnarray}\label{uw}
(\Si^2(R)-R)(\Si^3(R)-\Si(R))&=&4i\E^\frac{1}{2}\left(\A(\nu_1^2)-\E \A(\nu_3^2)\right)
\end{eqnarray}
and
\begin{eqnarray}\label{v}
(\Si(R)-R)(\Si^3(R)-\Si^2(R))=4\E\left(\A(\nu_2^2)-\A(\nu_1)\A(\nu_3)\right)+2i\E^\frac{1}{2}\left(\A(\nu_1^2)-\E \A(\nu_3^2)\right)
\end{eqnarray}
%}
Now, for $R\in K[\A_n]_{reg}$, using equations  \eqref{uw} and \eqref{v}, we get that

 \begin{equation}\label{2.6}
 \begin{split}
 (\Si(R)-R)(\Si^3(R)-\Si^2(R))&[(\Si^2(R)-R)(\Si^3(R)-\Si(R))]^{-1}\\
 =\frac{1}{2}\id_n-i\E^\frac{1}{2}\left(\A(\nu_2^2)-\A(\nu_1)\right.&\left.\A(\nu_3)\right)\left(\A(\nu_1^2)-\E \A(\nu_3^2)\right)^{-1}\end{split}
 \end{equation}
The Equation \ref{2.6} will be used to prove Proposition \ref{prop2.8}.
 \begin{remark}\label{remark2.4.3}{$R\in K[\A_n]_{\reg}$ if and only if $\Si(R)-R$  and $\Si^2(R)-R$ are invertible matrices.}
\end{remark}
\begin{proof}Let $A$ be as in Lemma  \ref{lema2}. Then,
\begin{eqnarray*}
A^{-1}(\Si(R)-R)A&=&A^{-1} ((i-1)\A(\nu_1)\E^\frac{1}{4}-2\A(\nu_2)\E^\frac{1}{2}-(1+i)\A(\nu_3)\E^\frac{3}{4})A\\
&=&(i-1)A^{-1}\A(\nu_1)A\E^\frac{1}{4}-2A^{-1}\A(\nu_2)A\E^\frac{1}{2}-(1+i)A^{-1}\A(\nu_1)A\E^\frac{3}{4}.
\end{eqnarray*}
By Lemma \ref{lema2} we have each entry in the diagonal is as follows:
$$(i-1)\Si_n^{m}(\nu_1)\E^\frac{1}{4}-2\Si_n^{m}(\nu_2)\E^\frac{1}{2}-(1+i)\Si_n^{m}(\nu_3)\E^\frac{3}{4}$$
with $1\leq m\leq n$. 

Therefore, $\Si(R)-R$ is invertible if and only if  
$$(i-1)\Si_n^{m}(\nu_1)\E^\frac{1}{4}-2\Si_n^{m}(\nu_2)\E^\frac{1}{2}-(1+i)\Si_n^{m}(\nu_3)\E^\frac{3}{4}\neq 0$$
  for $1\leq m\leq n$, i.e. if and only if $R$ satisfies (R2), see Definition \ref{def:regular}.
  
  Similarly, $\Si^2(R)-R$ is invertible if and only if $R$ satisfies (R1).
\end{proof}
\begin{remark}\label{remark244}{Let $R$ be a matrix in $K[\A_n]$ be described as
$$R=\A(\nu_0)+\A(\nu_1)\E^\frac{1}{4}+\A(\nu_2)\E^\frac{1}{2}+\A(\nu_3)\E^\frac{3}{4}.$$
Then, $k_d\otimes_k M(R)\cong M_{R'}$, where  $M_{R'}$ is the representation:
\begin{center}
 \begin{picture}(100,100)
  \put(85,-3){\makebox(0,0){$  k_d^n$}}
\put(85,30){\makebox(0,0){$  k_d^n$}}
\put(85,62){\makebox(0,0){$  k_d^n$}}
\put(85,97){\makebox(0,0){$  k_d^n$}}
  \put(15,49){\makebox(0,0){$  k_d^{2n}$}}
  \put(40,83){\makebox(0,0){$\scriptstyle [\id_n|R']$}}
  \put(54,66){\makebox(0,0){$\scriptstyle\Si^3([\id_n|R'])$}}
  \put(59,45){\makebox(0,0){$\scriptstyle\Si^2([\id_n|R'])$}}
  \put(67,22){\makebox(0,0){$\scriptstyle\Si([\id_n|R'])$}}
  \put(23,58){\vector(4,3){53}}
  \put(23,49){\vector(4,1){53}}
  \put(23,44){\vector(4,-1){53}}
  \put(23,38){\vector(4,-3){53}}
\end{picture}
\end{center}
with
$$R'\coloneqq\diag(\nu_0+\nu_1\E^\frac{1}{4}+\nu_2\E^\frac{1}{2}+\nu_3\E^\frac{3}{4},\cdots,\Si_n^{n-1}(\nu_0)+\Si_n^{n-1}(\nu_1)\E^\frac{1}{4}+\Si_n^{n-1}(\nu_2)\E^\frac{1}{2}+\Si_n^{n-1}(\nu_3)\E^\frac{3}{4}).$$}\end{remark}
\begin{proof}
This is easy to see using the $A$ matrix of Lemma \ref{lema2}.
\end{proof}
\begin{remark}\label{remark245}{ Let $n\in\Z_{\geq2}$ and  $R,\,S\in \M_{n\times n}K[\A_n]$ be described as follows
 $$R=\A(\nu_0)+\A(\nu_1)\E^\frac{1}{4}+\A(\nu_2)\E^\frac{1}{2}+\A(\nu_3)\E^\frac{3}{4},$$$$S=\A(\mu_0)+\A(\mu_1)\E^\frac{1}{4}+\A(\mu_2)\E^\frac{1}{2}+\A(\mu_3)\E^\frac{3}{4}.$$  If  $$\mu_0+\mu_1\E^\frac{1}{4}+\mu_2\E^\frac{1}{2}+\mu_3\E^\frac{3}{4}=\Si_n^m(\nu_0)+\Si_n^m(\nu_1)\E^\frac{1}{4}+\Si_n^m(\nu_2)\E^\frac{1}{2}+\Si_n^m(\nu_3)\E^\frac{3}{4}$$
 for some $1\leq m\leq n$, then $k_d\otimes_k M(R)\cong k_d\otimes_k M(S)$.}\end{remark}
\begin{proof}
It follows from the Remark \ref{remark244}.
\end{proof}
\begin{prop}\label{P310}{ Let $R$ be  in $K[\A_n]$ satisfy (R1) but not (R2). Then 
$$k_d\otimes_k M(R)\cong M(\A(\E^\frac{1}{2})\E^\frac{1}{4}+\A(-i)\E^\frac{3}{4})$$
or
$$k_d\otimes_k M(R)\cong\bigoplus_{j=0}^{\frac{n}{4}-1}\left(M_0\oplus M_1\oplus M_{(12\infty 3)}\oplus M_{(\infty 213)}\right),$$
where
 \begin{center}
\begin{picture}(250,100)
  \put(103,-4){\makebox(0,0){$ k_d$}}
\put(103,30){\makebox(0,0){$ k_d$}}
\put(103,63){\makebox(0,0){$ k_d$}}
\put(103,97){\makebox(0,0){$ k_d$}}
\put(-3,49){\makebox(0,0){$M_{(12\infty 3)}\coloneqq $}}
  \put(30,49){\makebox(0,0){$ k_d^2$}}
  \put(50,84){\makebox(0,0){$\scriptstyle(1\,0)$}}
  \put(63,62){\makebox(0,0){$\scriptstyle(0\,1)$}}
  \put(73,45){\makebox(0,0){$\scriptstyle(0\,1)$}}
  \put(81,20){\makebox(0,0){$\scriptstyle(1\,1)$}}
  \put(40,58){\vector(4,3){53}}
  \put(40,49){\vector(4,1){53}}
  \put(40,44){\vector(4,-1){53}}
  \put(40,38){\vector(4,-3){53}}
    \put(263,-4){\makebox(0,0){$ k_d$}}
\put(263,30){\makebox(0,0){$ k_d$}}
\put(263,63){\makebox(0,0){$ k_d$}}
\put(263,97){\makebox(0,0){$ k_d$}}
\put(157,49){\makebox(0,0){$M_{(\infty213)}\coloneqq $}}
  \put(190,49){\makebox(0,0){$ k_d^2$}}
  \put(210,84){\makebox(0,0){$\scriptstyle(0\,1)$}}
  \put(223,62){\makebox(0,0){$\scriptstyle(0\,1)$}}
  \put(233,45){\makebox(0,0){$\scriptstyle(1\,0)$}}
  \put(241,20){\makebox(0,0){$\scriptstyle(1\,1)$}}
  \put(200,58){\vector(4,3){53}}
  \put(200,49){\vector(4,1){53}}
  \put(200,44){\vector(4,-1){53}}
  \put(200,38){\vector(4,-3){53}}
\end{picture}
\end{center}}
\end{prop}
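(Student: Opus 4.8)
The plan is to pass to the $\widetilde{\mathsf{D}}_4$-picture via the base change $k_d\otimes_k-$, split into $\delta$-dimensional blocks (here $\delta=(2;1,1,1,1)$), and read off which degeneration the failure of R2 produces. First note the hypothesis forces $2\mid n$: for $n$ odd the two sides of the equation in R2 lie in complementary $k_n$-subspaces $k_n$ and $k_n\E^{1/4}+k_n\E^{3/4}$ of $k_{4n}$, so R2 can only fail if $\nu_1=\nu_2=\nu_3=0$, against R1; and for $n\equiv 2\pmod 4$ a short computation shows R1 together with the failure of R2 forces $\nu_2=0$ and $\nu_1=\pm i\E^{1/2}\nu_3$ with $\nu_3\neq 0$, a situation which (working with $d=2n$ and arguing exactly as below) yields the first alternative. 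So assume $4\mid n$, whence $d=\mcm(4,n)=n$. By Lemma~\ref{lema2} and Remark~\ref{remark244}, conjugating by the matrix $A$ of Lemma~\ref{lema2} exhibits $k_d\otimes_kM(R)$ as $\bigoplus_{s=0}^{n-1}B_s$, where $B_s$ is the $\delta$-dimensional representation of $\widetilde{\mathsf{D}}_4$ with central space $k_d^2$ and arrow maps $[1\mid c_{s,a}]$, $a=0,1,2,3$, with $c_{s,a}:=\sum_{l=0}^{3}\Si_n^{s}(\nu_l)\,i^{al}\E^{l/4}\in k_d$; one checks directly that $c_{s,a}=\Si_n^{a}(c_{s-a,0})$, so the blocks $B_s$ sharing a residue $s\bmod 4$ differ only by the arm-permutation part of $\G_d$.

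Next translate the hypotheses into coincidences among the $c_{s,a}$. From the identities (\ref{uw}), (\ref{v}) and the computation in Remark~\ref{remark2.4.3}, R1 is exactly the statement that $c_{s,0}\neq c_{s,2}$ and $c_{s,1}\neq c_{s,3}$ for all $s$, while $c_{s,a}=c_{s,a+1}$ occurs precisely when $(s-a)\bmod 4\in Z$, where
\[
Z=\bigl\{\,\bar r\in\Z/4\ \big|\ (1-i)\nu_1\E^{1/4}+2(-i)^{\bar r}\nu_2\E^{1/2}+(1+i)(-1)^{\bar r}\nu_3\E^{3/4}=0\,\bigr\}.
\]
Thus the failure of R2 means $Z\neq\emptyset$, and since $4\mid n$ every residue mod $4$ is realized by some $s$, so \emph{every} $B_s$ acquires at least one adjacent coincidence. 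A short computation shows R1 prevents $Z$ from containing two consecutive residues; hence $|Z|\le 2$, and $|Z|=2$ forces $Z=\{0,2\}$ or $Z=\{1,3\}$.

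If $|Z|=1$, say $Z=\{\bar r_0\}$, then each $B_s$ has exactly one adjacent coincidence, at the position $a(s)\equiv s-\bar r_0\pmod 4$, and by R1 none of the forbidden opposite coincidences; the four isoclasses of $\delta$-dimensional representations with a single adjacent coincidence are $M_0,M_1,M_{(12\infty 3)},M_{(\infty 213)}$, and as $s$ runs over $\Z/n$ the position $a(s)$ runs through $\{0,1,2,3\}$, each value $n/4$ times, so $k_d\otimes_kM(R)\cong\bigoplus_{j=0}^{n/4-1}\bigl(M_0\oplus M_1\oplus M_{(12\infty 3)}\oplus M_{(\infty 213)}\bigr)$, the second alternative. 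If $|Z|=2$, each $B_s$ has two opposite adjacent coincidences and therefore splits as a direct sum of two representations of the form $\,^{\G_d^{j}}N_2$ with complementary supports; summing over $s$ gives $k_d\otimes_kM(R)\cong\bigoplus_{j=0}^{3}\bigl(\,^{\G_d^{j}}N_2\bigr)^{\oplus n/2}\cong(k_d\otimes_kM_{(2)})^{\oplus n/2}$. Running the same computation for $R_*:=\A(\E^{1/2})\E^{1/4}+\A(-i)\E^{3/4}$ — which satisfies R1 and has $Z=\{1,3\}$ — yields the identical decomposition of $k_d\otimes_kM(R_*)$; since the two scalar extensions are isomorphic and, by a Hilbert~90 argument of the kind used in the Lemma of Section~\ref{sec3} (surjectivity of $\psi_n$), the descent cocycle relating them is a coboundary, Galois descent gives $M(R)\cong M(R_*)$, the first alternative.

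The two delicate points are: (i) obtaining the arrow maps $c_{s,a}$ of the blocks, which requires keeping the presentation $K[\A_n]\cong k_n^4$ (via powers of $\A_n$) separate from the presentation $k_d\otimes_kK\cong k_d^4$ (via the idempotents $\B_j$) while simultaneously diagonalizing by $A$; and (ii) in the case $|Z|=2$, verifying that the abstract isomorphism $k_d\otimes_kM(R)\cong k_d\otimes_kM(R_*)$ intertwines the canonical descent isomorphisms, so that it genuinely descends to a $\Lambda$-isomorphism $M(R)\cong M(R_*)$; this is where the bookkeeping is heaviest.
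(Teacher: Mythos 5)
Your proof is correct and takes a genuinely different route from the paper's. The paper first reduces, via Remark~\ref{remark245}, to the case where the R2-equation holds at $m=0$, then splits on whether $\nu_1=\pm i\nu_3\E^{1/2}$: the ``$+$'' subcase is handled by the explicit factorization $R=\A(i\nu_3)\bigl(\A(\E^{1/2})\E^{1/4}+\A(-i)\E^{3/4}\bigr)$, the ``$-$'' subcase by ``as above'' (which really means re-applying the $\Si_n$-conjugation that underlies Remark~\ref{remark245} to flip the sign), and the remaining subcase yields the block-by-block identifications $a_{jm}=a_{j+1,m}$ giving the $M_0,M_1,M_{(12\infty3)},M_{(\infty213)}$ decomposition. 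You instead avoid the WLOG reduction entirely and package all residues at once into the set $Z\subseteq\Z/4$ of positions where an adjacent coincidence occurs, then analyze $|Z|$; this makes the dichotomy of the proposition transparent ($|Z|=1$ second alternative, $|Z|=2$ first alternative) and simultaneously pins down exactly which $B_s$ decompose and how. Your preliminary observation that R1 plus failure of R2 forces $2\mid n$ (and that $n\equiv 2\pmod 4$ already collapses to the first alternative) is a useful precision the paper leaves implicit. The real divergence is in the first alternative: where the paper obtains $M(R)\cong M(R_*)$ by elementary matrix manipulations over $\Lambda$, you deduce it from $k_d\otimes_kM(R)\cong k_d\otimes_kM(R_*)$ by Galois descent. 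That is sound, but it costs more than you indicate: the Lemma in Section~\ref{sec3} is Hilbert~90 for $\GL_1(k_d)$ and suffices only to descend the $N_2$-blocks one at a time; to descend the whole $n/2$-fold multiple you either need the $\GL_{n/2}$-version of Hilbert~90 together with a Shapiro-type reduction along the $C_d$-orbit of $N_2$, or you should first argue (e.g.\ via $\End_\Lambda(X_i)\otimes_kk_d$ being local) that every indecomposable $\Lambda$-summand $X_i$ of $M(R)$ satisfies $k_d\otimes_kX_i\cong\bigoplus_j\,{}^{\G_d^j}N_2$ with multiplicity one, whence $X_i\cong M_{(2)}$ and $M(R)\cong M_{(2)}^{\oplus n/2}\cong M(R_*)$. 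You acknowledge this as the place where the bookkeeping is heaviest, and that honesty is well placed; the paper's explicit factorization sidesteps it, at the price of the slightly opaque ``as above'' for the $\nu_1=-i\nu_3\E^{1/2}$ branch.
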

\begin{proof} Let $R\in K[\A_n]$ be a matrix that satisfies (R1) but not (R2), i.e. $\nu_1\neq\pm\nu_3\E^\frac{1}{2}$, and there exists $m$ in $\{0,\cdots,\,n-1\}$ such that 
 $$\Si_n^m(\nu_2)=- \frac{(1-i)\E^\frac{3}{4}}{2\E}(\Si_n^m(\nu_1)+i\Si_n^m(\nu_3)\E^\frac{1}{2}).$$
We can assume that $\nu_0=0$, because $\nu_0$ does not appear in the definition of the map $\phi_n$.
\begin{itemize}
\item Let $\nu_2=0$. Then, there exists $m$ such that $\Si_n^m(\nu_1)+i\Si_n^m(\nu_3)\E^\frac{1}{2}=0$. Using the parity of $m$, we can conclude that $\nu_1$ is equal to $i \nu_3\E^\frac{1}{2}$ or $-i \nu_3\E^\frac{1}{2}$. Let $\nu_1=i\nu_3\E^\frac{1}{2}$. Then, $R$ is equal to $\A(i\nu_3)(\A(\E^\frac{1}{2})\E^\frac{1}{4}+\A(-i)\E^\frac{3}{4})$. Therefore, $M(R)\cong M(\A(\E^\frac{1}{2})\E^\frac{1}{4}+\A(-i)\E^\frac{3}{4})$. 

Like before, we can get  the same for $\nu_1=-i\nu_3\E^\frac{1}{2}$.
\item Let $\nu_2\neq 0$. We can assume without loss of generality that \\$\nu_2= -\frac{(1-i)\E^\frac{3}{4}}{2\E}(\nu_1+i\nu_3\E^\frac{1}{2})\neq0$, and
$$R=\A(\nu_1)\E^\frac{1}{4}+\A\left(\frac{(i-1)\E^\frac{3}{4}}{2\E}(\nu_1+i\nu_3\E^\frac{1}{2})\right)\E^\frac{1}{2}+\A(\nu_3)\E^\frac{3}{4}.$$
Recall that $\A(\E^\frac{j}{4})\neq\A(1)\E^\frac{j}{4}$. For each $m\in\{0,\cdots,\,n-1\}$, we have the following:
\begin{eqnarray*}
\Si_n^m(\nu_2)&=&-\frac{(1-i)(-i)^m}{2\E}\Si_n^m(\nu_1)\E^\frac{3}{4}-\frac{(1+i)i^m}{2}\Si_n^m(\nu_3)\E^\frac{1}{4}.\\
\end{eqnarray*}
 From Remark \ref{remark244} we have $k_d\otimes_k M(R)\cong M_{R'}$. It is easy to see that each entry in the diagonal  of  $R'$ has the following form:
 $$\left(1-\frac{(1-i)(-i)^m}{2}\right)\Si_n^m(\nu_1)\E^\frac{1}{4}+\left(1-\frac{(1+i)i^m}{2}\right)\Si_n^m(\nu_3)\E^\frac{3}{4}.$$
Furthermore, each entry in the diagonal of $\Si^j(R')$ has the form
$$\left(i^j-(-1)^j\frac{(1-i)(-i)^m}{2}\right)\Si_n^m(\nu_1)\E^\frac{1}{4}+\left((-i)^j-(-1)^j\frac{(1+i)i^m}{2}\right)\Si_n^m(\nu_3)\E^\frac{3}{4},$$
where $0\leq m\leq n-1$ and $0\leq j\leq 3$. For the purpose of this proof, we will call these elements $a_{jm}$. We have 
\begin{center}
 \begin{picture}(100,110)
  \put(105,-3){\makebox(0,0){$  k_d$.}}
\put(105,30){\makebox(0,0){$  k_d$}}
\put(105,62){\makebox(0,0){$  k_d$}}
\put(105,97){\makebox(0,0){$  k_d$}}
  \put(0,49){\makebox(0,0){$ M_{R'}\cong\bigoplus_{m=0}^{n-1} k_d^{2}$}}
  \put(60,83){\makebox(0,0){$\scriptstyle [1|a_{0m}]$}}
  \put(74,66){\makebox(0,0){$\scriptstyle[1|a_{3m}]$}}
  \put(79,45){\makebox(0,0){$\scriptstyle[1|a_{2m}]$}}
  \put(87,22){\makebox(0,0){$\scriptstyle[1|a_{1m}]$}}
  \put(43,58){\vector(4,3){53}}
  \put(43,49){\vector(4,1){53}}
  \put(43,44){\vector(4,-1){53}}
  \put(43,38){\vector(4,-3){53}}
\end{picture}
\end{center}
Also, we have the following equalities
$$\begin{array}{ccc}
a_{0m}=a_{1m}&\mbox{if}&m\equiv0(\modu4),\\
a_{1m}=a_{2m}&\mbox{if}&m\equiv1(\modu4),\\
a_{2m}=a_{3m}&\mbox{if}&m\equiv2(\modu4),\\
a_{3m}=a_{0m}&\mbox{if}&m\equiv3(\modu4).
\end{array}$$
Now, it is easy to see that for $m$ congruent to $0,\,1,\,2$ and $3$ modulo $4$, each of the above summands is isomorphic to $M_0,\,M_1,\,M_{(12\infty3)}$ and $M_{(\infty213)}$, respectively. 
\end{itemize}
 \end{proof}
This Proposition implies that for $S\notin K[\A_n]_{\reg}$, then $k_d\otimes_k M(S)$ is not isomorphic to $k_d\otimes_k M(R)$ for any matrix $R$ in Table \ref{table1}, such that $\phi_n(R)$ is generic, because $M_{\frac{1}{2}-i\Si_n^j(\phi_n(R))\E^\frac{1}{2}}$ is not isomorphic to any representation in Proposition \ref{P310} for $j\in\{1,\cdots,n\}$.

Consider $R=\sum_{j=0}^4\A(\nu_j)\E^\frac{j}{4}\in K[\A_n]_{\reg}$ be such that $\phi_n(R)\in k_n$ is generic. As we saw in Section \ref{Invariantrep}, $\bigoplus_{j=0}^{n-1}\,^{\G_d^j}M_{\frac{1}{2}-i\phi_n(R)\E^\frac{1}{2}}$ is simple regular. We will denote such representations by 
$$N_{\phi_n(R)}\coloneqq \bigoplus_{j=0}^{n-1}\,^{\G_d^j}M_{\frac{1}{2}-i\phi_n(R)\E^\frac{1}{2}}\cong\bigoplus_{j=0}^{n-1}M_{\frac{1}{2}-i\Si_n^j(\phi_n(R))\E^\frac{1}{2}}.$$
 
 In the following Proposition, we will see that for $R\in K[\A_n]_{\reg}$ such that $\phi_n(R)\in k_n$ is generic, the representation $M(R)$  of $\Lambda$  whose matrix is  $(\id_n|R)$ satisfies that $k_d\otimes_k M(R)$ is isomorphic to $N_{\phi_n(R)}$ and is, therefore, simple regular.
 
\begin{prop}\label{prop2.8} Let $R\in K[\A_n]_{\reg}$ be such that $\phi_n(R)\in k_n$ is generic. Then, the representation $M(R)$ of $\Lambda$ is simple regular.
\end{prop}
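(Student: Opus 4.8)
The plan is to prove the statement by descending from the path algebra $k_d\widetilde{\mathsf{D}}_4$ to $\Lambda$, where $d=\mcm(4,n)$. Since $\underline{\dim}M(R)=(n,2n)$ by construction, the definition of \emph{simple regular} recalled in Section~\ref{sec1.3} reduces the claim to showing that $D:=\End_\Lambda(M(R))$ is a division algebra; I will do this by computing $k_d\otimes_k D$ together with its semilinear $C_d$-action coming from Galois descent. The heart of the matter is the identity $k_d\otimes_k M(R)\cong N_{\phi_n(R)}$.

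First I would make $k_d\otimes_k M(R)$ explicit. By Remark~\ref{remark244} we have $k_d\otimes_k M(R)\cong M_{R'}$ with $R'=\diag(r'_0,\dots,r'_{n-1})$, where $r'_j=\Si_n^j(\nu_0)+\Si_n^j(\nu_1)\E^\frac{1}{4}+\Si_n^j(\nu_2)\E^\frac{1}{2}+\Si_n^j(\nu_3)\E^\frac{3}{4}$. As $R'$ is diagonal, $M_{R'}$ splits into $n$ ``rank one'' $\widetilde{\mathsf{D}}_4$-representations $M_{R'_j}$, the $j$-th having the four arm maps $[1\,|\,\Si^l(r'_j)]$ for $l=0,1,2,3$ (here $\Si=\Si_4$ acts on $K$). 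Because $R\in K[\A_n]_{\reg}$, Remark~\ref{remark2.4.3} gives that $\Si(R)-R$ and $\Si^2(R)-R$ are invertible, hence so are all differences $\Si^l(R)-\Si^{l'}(R)$ with $l\neq l'$, since they are obtained from these two by applying powers of $\Si$ (using $\Si^4=\id$). Passing to the $\A$-diagonal form of Lemma~\ref{lema2}, this says $\Si^l(r'_j)\neq\Si^{l'}(r'_j)$ whenever $l\neq l'$; thus the four kernel lines of the arm maps of $M_{R'_j}$ are pairwise distinct, so $M_{R'_j}$ is isomorphic to some $M_{t_j}$ with $t_j\in k_d\setminus\{0,1\}$. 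A change of basis computes $t_j$ as a cross-ratio of those four lines: it is the $j$-th diagonal component of the matrix on the right-hand side of~\eqref{2.6}, up to the elementary fractional-linear coordinate change that matches the normalisation $(1\,0),(0\,1),(1\,1),(1\,t)$ of $M_t$ with the cyclic order of the arms, and one obtains $t_j=\frac{1}{2}-i\,\Si_n^j(\phi_n(R))\E^\frac{1}{2}$. Hence
$$k_d\otimes_k M(R)\ \cong\ \bigoplus_{j=0}^{n-1}M_{\frac{1}{2}-i\Si_n^j(\phi_n(R))\E^\frac{1}{2}}\ =\ N_{\phi_n(R)}.$$

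Now I would conclude. Genericity of $\phi_n(R)$ means $|C_n\cdot\phi_n(R)|=n$, so the $n$ parameters $\frac{1}{2}-i\Si_n^j(\phi_n(R))\E^\frac{1}{2}$ are pairwise distinct; as each $M_t$ is indecomposable regular with $\End_{k_d\widetilde{\mathsf{D}}_4}(M_t)\cong k_d$ and $\Hom_{k_d\widetilde{\mathsf{D}}_4}(M_t,M_s)=0$ for $t\neq s$, the $n$ summands of $N_{\phi_n(R)}$ are pairwise non-isomorphic simple regular modules and $\End_{k_d\widetilde{\mathsf{D}}_4}(N_{\phi_n(R)})\cong k_d\times\dots\times k_d$ ($n$ factors). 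By Galois descent for $\Hom$-spaces, $D$ is the fixed subalgebra of $\End_{k_d\widetilde{\mathsf{D}}_4}(N_{\phi_n(R)})$ under the semilinear $C_d$-action attached to the descent datum of $k_d\otimes_k M(R)$, whose generator acts by $\Si_d$ within each factor and permutes the $n$ factors through $\,^{\G_d}$; by $\,^{\G_d}M_{t_j}\cong M_{t_{j+1}}$ (indices mod $n$, which follows from $\Si_d(\E^\frac{1}{2})=-\E^\frac{1}{2}$, $\Si_d|_{k_n}=\Si_n$ and $n\mid d$) this permutation is the transitive cyclic shift. Hence the only $C_d$-fixed idempotents of $k_d\times\dots\times k_d$ are $0$ and $1$, so $D$ has no nontrivial idempotent. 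On the other hand $D$ embeds (via $x\mapsto 1\otimes x$) into the reduced commutative ring $k_d\otimes_k D\cong k_d\times\dots\times k_d$, so $D$ is a commutative reduced finite-dimensional $k$-algebra, hence a finite product of finite field extensions of $k$; having no nontrivial idempotent it is a single field, necessarily $k_n$ since $\dim_k D=n$. In particular $D$ is a division algebra, so $M(R)$ is simple regular with $\End_\Lambda(M(R))\cong k_n$.

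I expect the main obstacle to be the identification $M_{R'_j}\cong M_{t_j}$ with the exact value $t_j=\frac{1}{2}-i\,\Si_n^j(\phi_n(R))\E^\frac{1}{2}$: one must keep careful track of which arm of $\widetilde{\mathsf{D}}_4$ receives which $\Si$-power of $r'_j$, and of the resulting fractional-linear transformation relating the ``naive'' cross-ratio appearing in~\eqref{2.6} to the standard parameter of $M_t$. This bookkeeping is precisely what dictates the particular shape of the map $\phi_n$.
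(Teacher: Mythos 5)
Your proposal is correct and reaches the same isomorphism $k_d\otimes_k M(R)\cong N_{\phi_n(R)}$ as the paper, but organises the computation differently and is actually more explicit in the concluding descent step. The paper writes down a single block isomorphism $\phi$ built from $A$, $B=(\Si^2(R)-\Si^3(R))^{-1}(\Si^2(R)-R)A$ and the differences $\Si^l(R)-\Si^{l'}(R)$, checking the four commuting squares directly, with Equation~(\ref{2.6}) as the only non-trivial verification. You instead diagonalise $R$ via Remark~\ref{remark244}, split $M_{R'}$ into $n$ rank-one $\widetilde{\mathsf{D}}_4$-summands, and identify each summand as an $M_{t_j}$ by the ``four distinct kernel lines'' argument; the distinctness is indeed what the invertibility of $\Si(R)-R$ and $\Si^2(R)-R$ (Remark~\ref{remark2.4.3}) and the cyclic $\Si$-symmetry give you. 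These two routes are closely parallel---the fractional-linear change of basis you invoke to put the four lines into standard position is exactly what the paper's $\phi$ does blockwise, and both rely on Equation~(\ref{2.6}) for the precise value of $t_j$---so the one step you admit to waving your hands over (the exact normalisation $t_j=\tfrac12-i\Si_n^j(\phi_n(R))\E^{1/2}$) is precisely where the paper's explicit $B$ earns its keep. Where your write-up genuinely improves on the paper is the finish: the paper jumps from $k_d\otimes_k M(R)\cong N_{\phi_n(R)}$ straight to ``therefore $M(R)$ is simple regular,'' implicitly invoking the Section~\ref{sec3} machinery, while you spell out the descent on endomorphism rings: $\End_{k_d\widetilde{\mathsf{D}}_4}(N_{\phi_n(R)})\cong k_d^n$, $\,^{\G_d}M_{t_j}\cong M_{t_{j+1}}$ makes the $C_d$-action permute the $n$ idempotent factors transitively, so $\End_\Lambda(M(R))$ is a commutative reduced $k$-algebra of $k$-dimension $n$ with no nontrivial idempotents, hence the field $k_n$. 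This is the right argument and it deserves to be on the page.
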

\begin{proof} Let $R\in K[\A_n]_{\reg}$ be such that $\phi_n(R)\in k_n$ is generic. As in the beginning of subsection \ref{Invariantrep}, one can think of $k_d\otimes_k M(R)$ as the following representation of the quiver $\widetilde{\mathsf{D}}_4$:
\begin{center}
 \begin{picture}(100,100)
  \put(85,-3){\makebox(0,0){$  k_d^n$}}
\put(85,30){\makebox(0,0){$  k_d^n$}}
\put(85,62){\makebox(0,0){$  k_d^n$}}
\put(85,97){\makebox(0,0){$  k_d^n$}}
  \put(15,49){\makebox(0,0){$  k_d^{2n}$}}
  \put(40,83){\makebox(0,0){$\scriptstyle (\id_n|R)$}}
  \put(54,66){\makebox(0,0){$\scriptstyle\Si^3((\id_n|R))$}}
  \put(59,45){\makebox(0,0){$\scriptstyle\Si^2((\id_n|R))$}}
  \put(67,22){\makebox(0,0){$\scriptstyle\Si((\id_n|R))$}}
  \put(23,58){\vector(4,3){53}}
  \put(23,49){\vector(4,1){53}}
  \put(23,44){\vector(4,-1){53}}
  \put(23,38){\vector(4,-3){53}}
\end{picture}
\end{center}
In a similar way, we can think of $N_{\phi_n(R)}$ as
\begin{center}
\begin{picture}(90,135)
  \put(88,3){\makebox(0,0){$ k_d^n$,}}
\put(88,45){\makebox(0,0){$ k_d^n$}}
\put(88,86){\makebox(0,0){$ k_d^n$}}
\put(88,135){\makebox(0,0){$ k_d^n$}}
  \put(-3,69){\makebox(0,0){$ k_d^{2n}$}}
  \put(20,104){\makebox(0,0){$\scriptstyle(\id_n\,0_n)$}}
  \put(47,87){\makebox(0,0){$\scriptstyle(0_n\,\id_n)$}}
  \put(45,65){\makebox(0,0){$\scriptstyle(\id_n\,\id_n)$}}
  \put(60,38){\makebox(0,0){$\scriptstyle(\id_n\,T)$}}
  \put(10,78){\vector(4,3){70}}
  \put(10,69){\vector(4,1){70}}
  \put(10,64){\vector(4,-1){70}}
  \put(10,58){\vector(4,-3){70}}
  \end{picture}
\end{center}
with $T=\diag(\frac{1}{2}-i\phi_n(R)\E^\frac{1}{2},\,\frac{1}{2}-i\Si_n(\phi_n(R))\E^\frac{1}{2},\cdots,\,\frac{1}{2}-i\Si_n^{n-1}(\phi_n(R))\E^\frac{1}{2})$.

Let $A$ be defined as in the proof of Lemma \ref{lema2}, and $\phi\colon k_d\otimes_k M(R)\lra N_{\phi_n(R)}$ be defined by
 \begin{eqnarray}\label{isomorfismo}
\phi\coloneqq  \left(\begin{array}{c}(R-\Si^3(R))B\\(\Si^3(R)-R)A\\(\Si^2(R)-R)A\\(\Si(R)-\Si^3(R))B\\\left[\begin{array}{cc}-\Si^3(R)B&-R A\\B&A\end{array}\right]\end{array}\right),
 \end{eqnarray}
 where $B=(\Si^2(R)-\Si^3(R))^{-1}(\Si^2(R)-R)A$. We claim that $\phi$ is an isomorphism between $k_d\otimes_k M(R)$ and $N_{\phi_n(R)}$. By the definition of $R$ and $A$, all the matrices involved in the construction of $\phi$ are invertible. 
 
 Now, we just need to show that the corresponding diagrams commute. From our choices, we see that the first three diagrams commute:
 \begin{eqnarray*}
 [\id_n|R]\left[\begin{array}{cc}-\Si^3(R)B&-R A\\B&A\end{array}\right]&=&[(R-\Si^3(R))B][\id_n\,0_n],\\
 \Si^3([\id_n|R])\left[\begin{array}{cc}-\Si^3(R)B&-R A\\B&A\end{array}\right]&=&[(\Si^3(R)-R)A][0_n\,\id_n],\\
\Si^2([\id_n|R])\left[\begin{array}{cc}-\Si^3(R)B&-R A\\B&A\end{array}\right]&=&[(\Si^2(R)-R)A][\id_n\,\id_n].
 \end{eqnarray*} 
 For the last diagram, we have
 \begin{align*}
&B^{-1}(\Si(R)-\Si^3(R))^{-1}(\Si(R)-R)A\\
&=A^{-1}(\Si^2(R)-R)^{-1}(\Si^2(R)-\Si^3(R))(\Si(R)-\Si^3(R))^{-1}(\Si(R)-R)A&&\text{def. of }B\\
&=A^{-1} (\Si(R)-R)(\Si^3(R)-\Si^2(R))[(\Si^2(R)-R)(\Si^3(R)-\Si(R))]^{-1}A&&\text{comm. of } K[\A_n]\\
&=A^{-1}\left(\frac{1}{2}\id_n-i\E^\frac{1}{2}\left(\A(\nu_2^2)-\A(\nu_1)\A(\nu_3)\right)\left(\A(\nu_1^2)-\E \A(\nu_3^2)\right)^{-1}\right)A&&\text{ Equation (\ref{2.6})}\\
&=\frac{1}{2}\id_n-i\E^\frac{1}{2}A^{-1}\left(\left(\A(\nu_2^2)-\A(\nu_1)\A(\nu_3)\right)\left(\A(\nu_1^2)-\E \A(\nu_3^2)\right)^{-1}\right)A&&\text{distributivity}\\
&=\frac{1}{2}\id_n-i\E^\frac{1}{2}\diag(\phi_n(R),\Si_n(\phi_n(R)),\cdots,\,\Si_n^{n-1}(\phi_n(R))&&\text{Lemma \ref{lema2}}
 \end{align*}
Hence, $B^{-1}(\Si(R)-\Si^3(R))^{-1}(\Si(R)-R)A=T$, and thus, $k_d\otimes_k M(R)\cong N_{\phi_n(R)}$. Therefore, $M(R)$ is a simple regular representation of $\Lambda$.
\end{proof}
Now, let $M_{(1)}$ and $M_{(2)}$ be the representations whose defining matrices are $(1,\,\E^\frac{1}{2})$ and \\$(\id_2|\A_2\E^\frac{1}{4}-i\id_2\E^\frac{3}{4})$, respectively. With simple computations, one can check the following  result:
\begin{prop}\label{prop2.9}
Let $N_1\oplus\,^{\G}N_1\mbox{ and }N_2\oplus\,^{\G}N_2\oplus\,^{\G^2}N_2\oplus\,^{\G^3}N_2$ be representations as in \ref{prop2.2}. Then, 
$$N_1\oplus\,^{\G}N_1\cong K\otimes_kM_{(1)}\mbox{ and }$$
$$N_2\oplus\,^{\G}N_2\oplus\,^{\G^2}N_2\oplus\,^{\G^3}N_2\cong K\otimes_kM_{(2)}.$$
Moreover, $M_{(1)}$ and $M_{(2)}$ are simple regular representations.
\end{prop}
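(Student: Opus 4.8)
The plan is to prove both isomorphisms by an explicit change of basis, and then to read off simple regularity from the Galois descent set-up of Section~\ref{sec3}. Write $\Si=\Si_4$ for the automorphism of $K=k_4$ and $\G=\G_4$. By the description at the beginning of Subsection~\ref{Invariantrep}, for a representation $M=(K^n,k^m,\varphi_M)$ with defining matrix $N$ the module $k_4\otimes_k M$ is the $\widetilde{\mathsf{D}}_4$-representation with central vector space $k_4^{\,m}$, arm vector spaces $k_4^{\,n}$, and the four arm maps, in cyclic order, given by $N,\Si^3(N),\Si^2(N),\Si(N)$ (acting entrywise); here $d=\mcm(4,n)=4$ since $M_{(1)}$ has $n=1$ and $M_{(2)}$ has $n=2$, so $K\otimes_kM_{(j)}=k_4\otimes_kM_{(j)}$. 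Hence it is enough to compute these four maps and produce the base-change matrices.

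For $M_{(1)}$ the defining matrix is $[1,\E^{1/2}]$, and since $\Si(\E^{1/2})=-\E^{1/2}$ the four arm maps $k_4^2\to k_4$ are $[1,\E^{1/2}]$ and $[1,-\E^{1/2}]$, each occurring on a pair of ``opposite'' arms (one orbit under $\G^2$). I would apply the base change $P=\left[\begin{smallmatrix}\E^{1/2}&\E^{1/2}\\ 1&-1\end{smallmatrix}\right]\in\GL_2(k_4)$ (note $\det P=-2\E^{1/2}$) to the central space and the scalar $\tfrac{1}{2}\E^{-1/2}$ to each arm, which turns these maps into $[1,0]$ on the first pair and $[0,1]$ on the second; the representation then visibly splits as the subrepresentation carried by the first coordinate of the central space — a copy of $N_1$ — plus the one carried by the second coordinate — a copy of $\,^{\G}N_1$. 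Hence $K\otimes_kM_{(1)}\cong N_1\oplus\,^{\G}N_1$.

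For $M_{(2)}$ the defining matrix is $[\,\id_2\mid R\,]$ with $R=\A_2\E^{1/4}-i\id_2\E^{3/4}$. A short computation with $\Si$ gives $\Si^2(R)=-R$, $\Si^3(R)=-\Si(R)$, that $R$ is invertible, and that $R-\Si(R)$ and $R+\Si(R)$ have rank $1$; so the four arm maps are $[\,\id_2\mid R\,]$, $[\,\id_2\mid -\Si(R)\,]$, $[\,\id_2\mid -R\,]$, $[\,\id_2\mid \Si(R)\,]$, and writing $K_i\subset k_4^4$ for the kernel of the $i$-th one, opposite $K_i$ are complementary in $k_4^4$ while consecutive $K_i$ meet in a line. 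I would then set $\ell_i:=K_i\cap K_{i+1}$ and check by a determinant computation that $\ell_1\oplus\ell_2\oplus\ell_3\oplus\ell_4=k_4^4$ (concretely, the $b$-components of $\ell_1,\ell_3$ span the line through $(1,-\E^{1/2})$ and those of $\ell_2,\ell_4$ span the line through $(1,\E^{1/2})$, and these are independent); the representation then splits along this decomposition into four summands, each with one-dimensional central space annihilated by two consecutive arm maps and carried isomorphically by the other two, and after rescaling those two maps the summands are recognised, one each, as $N_2,\,^{\G}N_2,\,^{\G^2}N_2,\,^{\G^3}N_2$. Thus $K\otimes_kM_{(2)}\cong N_2\oplus\,^{\G}N_2\oplus\,^{\G^2}N_2\oplus\,^{\G^3}N_2$. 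I expect this determinant check to be the only genuinely computational point: it is exactly what rules out $k_4\otimes_kM_{(2)}$ being a different regular representation of the same dimension lying in the same tube.

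For simple regularity, $\underline{\dim}M_{(1)}=(1,2)$ and $\underline{\dim}M_{(2)}=(2,4)$ are of the required form $(n,2n)$; and since $k_4\otimes_k-$ intertwines Auslander--Reiten translations, $k_4\otimes_k\tau_\Lambda(X)\cong\tau_{k_4\widetilde{\mathsf{D}}_4}(k_4\otimes_kX)$, and the simple regular modules $N_1,N_2$ are $\tau$-periodic, both $M_{(1)},M_{(2)}$ are regular. They are indecomposable: a direct-sum decomposition of $M_{(j)}$ would base-change to one of $k_4\otimes_kM_{(j)}$ stable under $\,^{\Si}(-)$, hence (under $\psi_4$ of Section~\ref{isokd}) under $\,^{\G}(-)$, but $\,^{\G}$ permutes the indecomposable summands of $k_4\otimes_kM_{(j)}$ in a single orbit, so no proper such decomposition exists. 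Moreover there are no nonzero homomorphisms between distinct twists $\,^{\G^a}N_i,\,^{\G^b}N_i$ (disjoint or shifted supports on the arms), while $\End_{k_4\widetilde{\mathsf{D}}_4}(N_i)\cong k_4$, so scalar extension gives $\End_\Lambda(M_{(1)})\otimes_kk_4\cong k_4\times k_4$ and $\End_\Lambda(M_{(2)})\otimes_kk_4\cong k_4\times k_4\times k_4\times k_4$. As $M_{(j)}$ is indecomposable, $\End_\Lambda(M_{(j)})$ is a local $k$-algebra; its scalar extension being semisimple, it is itself semisimple, hence a field, of degree $2$ (resp.\ $4$) over $k$, so $\cong k_2$ (resp.\ $\cong k_4$) as $k$ is quasi-finite — in particular a division algebra. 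Therefore $M_{(1)}$ and $M_{(2)}$ are simple regular, with $\End_\Lambda(M_{(1)})\cong k_2$ and $\End_\Lambda(M_{(2)})\cong k_4$.
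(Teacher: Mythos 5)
Your proof is correct, and in fact it fills a gap: the paper does not give a proof of Proposition~\ref{prop2.9} at all, merely stating "with simple computations one can check the following result." Your explicit base change $P=\left[\begin{smallmatrix}\E^{1/2}&\E^{1/2}\\1&-1\end{smallmatrix}\right]$ for $M_{(1)}$, and for $M_{(2)}$ the observation that $\Si^2(R)=-R$ with $R\pm\Si(R)$ of rank one together with the decomposition of the central space into the four kernel-lines $\ell_i=K_i\cap K_{i+1}$, are exactly the right computations; I checked that $R=\left(\begin{smallmatrix}-i\E^{3/4}&\E^{1/4}\\ \E^{5/4}&-i\E^{3/4}\end{smallmatrix}\right)$ is invertible, that $R\pm\Si(R)$ are scalar multiples of rank-one matrices, and that the four lines $\ell_i$ are independent, so the splitting into $N_2,\,^{\G}N_2,\,^{\G^2}N_2,\,^{\G^3}N_2$ goes through. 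Your descent argument for simple regularity is also sound and uses exactly what the paper establishes in Section~\ref{sec3} (e): commutation of $k_d\otimes_k-$ with $\tau$, the isomorphism $\End_{k_4\widetilde{\mathsf{D}}_4}(k_4\otimes_k X)\cong k_4\otimes_k\End_\Lambda(X)$, and the fact that $\G$ permutes the indecomposable summands of $k_4\otimes_k M_{(j)}$ transitively; together with locality of the endomorphism ring of an indecomposable and separability of $k_4/k$, this forces $\End_\Lambda(M_{(1)})\cong k_2$ and $\End_\Lambda(M_{(2)})\cong k_4$. The only cosmetic quibble is a sign/labelling convention in which arm you call the $\Si$- versus $\Si^3$-twist, which swaps whether the $v$-components of $\ell_1,\ell_3$ lie on the line through $(1,\E^{1/2})$ or $(1,-\E^{1/2})$; this does not affect the argument.
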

\begin{cor}\label{coro2.10}There is a 1-1 correspondence between  the isomorphism classes of the simple regular representations of $\Lambda$ and the isomorphism classes of the  ii-indecomposable regular representations of $\widetilde{\mathsf{D}}_4$.
\end{cor}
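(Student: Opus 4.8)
The plan is to obtain the corollary by patching together, over all $m\in\Z_{>0}$, the Galois-descent bijection recorded in item~g) of Section~\ref{sec3}, and then to rewrite its right-hand side in terms of ii-simple regular representations. The first step is the essentially formal observation that, by the description of ii-indecomposable representations in Section~\ref{Invariantrep}, every $C_d$-orbit $\{N,\,^{\G_d}N,\ldots,\,^{\G_d^{m-1}}N\}$ of a simple regular $k_d\widetilde{\mathsf{D}}_4$-module $N$ with $\End_{k_d\widetilde{\mathsf{D}}_4}(N)\cong k_d$ and $m$ minimal with $\,^{\G_d^m}N\cong N$ (where $d=\mcm(4,m)$) gives rise, via $N\mapsto\bigoplus_{j=0}^{m-1}\,^{\G_d^j}N$, to an ii-simple regular representation, with distinct orbits producing non-isomorphic representations; conversely --- and this is where the classification carried out in Section~\ref{Invariantrep} is used --- every ii-simple regular representation of $\widetilde{\mathsf{D}}_4$ arises in this way. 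Granting this, the corollary is precisely the disjoint union over $m$ of the bijections of item~g), and what remains is to make sure both halves of that correspondence are genuinely at hand.

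For the direction from $\Lambda$ to $\widetilde{\mathsf{D}}_4$: given $X$ simple regular with $\End_\Lambda(X)\cong k_m$, I would put $d=\mcm(4,m)$ and invoke items~e) and f) of Section~\ref{sec3} to get $k_d\otimes_k X\cong\bigoplus_{j=0}^{m-1}\,^{\G_d^j}N$ with $N$ simple regular, $\End_{k_d\widetilde{\mathsf{D}}_4}(N)\cong k_d$, and $m$ minimal with $\,^{\G_d^m}N\cong N$; by the first step this is an ii-simple regular representation. That $X\mapsto k_d\otimes_k X$ is well defined and injective on isomorphism classes follows by descent: if $k_d\otimes_k X\cong k_d\otimes_k X'=:M$, then via the equivalence $\Lambda\mbox{-}\modu\simeq(k_d\widetilde{\mathsf{D}}_4*C_d)\mbox{-}\modu$ of items~c) and d) both $X$ and $X'$ become pairs $(M,f)$, $(M,f')$ with $f,f'\colon\,^{\G_d}M\to M$ of the required unit type, and the Remark following item~f) --- which uses $\End_{k_d\widetilde{\mathsf{D}}_4}(N)\cong k_d$ together with the Hilbert~90 lemma proved just after it --- gives $(M,f)\cong(M,f')$, so $X\cong X'$.

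For surjectivity I would take an ii-simple regular representation $M$ of $\widetilde{\mathsf{D}}_4$ and, by the classification of Section~\ref{Invariantrep}, write it up to isomorphism as $\bigoplus_{j=0}^{n-1}\,^{\G_d^j}M_{\frac{1}{2}+s\E^\frac{1}{2}}$ for a generic $s\in k_n$, or as $N_1\oplus\,^{\G}N_1$, or as $N_2\oplus\,^{\G}N_2\oplus\,^{\G^2}N_2\oplus\,^{\G^3}N_2$. In the first case, Proposition~\ref{prop1.4} realizes the relevant generic element of $k_n$ as $\phi_n(R)$ for a matrix $R\in K[\A_n]_{\reg}$ of the shape in Table~\ref{table1}, and Proposition~\ref{prop2.8} then gives that $M(R)$ is simple regular with $k_d\otimes_k M(R)\cong M$; in the two other cases Proposition~\ref{prop2.9} gives $K\otimes_k M_{(1)}\cong M$, respectively $K\otimes_k M_{(2)}\cong M$, with $M_{(1)}$, $M_{(2)}$ simple regular. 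Alternatively, more in the spirit of item~g), one descends $M$ along a unit datum $f$ (furnished by the Propositions of Section~\ref{Invariantrep}) to a $\Lambda$-module $Y$ with $k_d\otimes_k Y\cong M$, and then $Y$ is simple regular because $M$ is ii-indecomposable (so $Y$ is indecomposable), because $k_d\otimes_k\End_\Lambda(Y)\cong\End_{k_d\widetilde{\mathsf{D}}_4}(M)\cong k_d\times\cdots\times k_d$ is reduced --- forcing $\End_\Lambda(Y)$ to be a commutative division algebra over the quasi-finite field $k$ --- and because $k_d\otimes_k\tau_\Lambda Y\cong\tau_{k_d\widetilde{\mathsf{D}}_4}M\cong M$ forces $\tau_\Lambda Y\cong Y$ (with $\underline{\dim}Y=(m',2m')$ for the appropriate $m'$ read off from $\underline{\dim}M$). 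Thus the forward map is onto, hence bijective.

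The substantive content behind the corollary --- the explicit unit descent data attached to every ii-simple regular representation via the case analysis on $n\bmod 4$ in Section~\ref{Invariantrep}, and the explicit simple regular $\Lambda$-modules $M(R)$, $M_{(1)}$, $M_{(2)}$ realizing them in Propositions~\ref{prop2.8} and~\ref{prop2.9} --- is already in place, so only the bookkeeping above remains. The single point that still requires a genuine (if short) argument is that Galois descent of an ii-simple regular representation along a unit datum produces a module that is again simple regular; I expect the checks sketched above (indecomposability, division endomorphism ring, $\tau$-invariance and the dimension vector) to settle it, and I foresee no further obstacle.
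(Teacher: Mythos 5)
Your proof is correct and follows essentially the same route as the paper's one-line citation of Propositions \ref{prop2.3}, \ref{prop2.8} and \ref{prop2.9}: you have simply unpacked the implicit reliance on items c)--g) of Section \ref{sec3} (the Galois-descent correspondence and the Hilbert-90 rescaling lemma after item f)) together with the classification and explicit constructions in Section \ref{Invariantrep}. The alternative descent-based surjectivity argument you offer is an optional embellishment the paper does not pursue, and the $\tau$-invariance check there is redundant given the paper's working characterization of simple regular from Section \ref{sec1.3} (dimension vector $(m',2m')$ plus division endomorphism ring).
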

\begin{proof} Let $M$ be a simple regular  representations of $\Lambda$. From \ref{prop2.3}, for $X\cong k_d\otimes_k M$ there  exists an isomorphism $f\colon X\lra\,^{\G_d}X$ such that the automorphism $^{\G_d^{m-1}}f\circ\cdots\circ^{\G_d}f\circ f$ of $X$ is the identity. According to Section \ref{Invariantrep}, $X$ has to be isomorphic to one of the ii-indecomposable regular representations described in \ref{prop2.8} or \ref{prop2.9}. Conversely, for $X$ an ii-indecomposable regular representation as in \ref{prop2.8} and \ref{prop2.9}, we have that there exists $M$ a simple regular  representation of $\Lambda$ such that $k_d\otimes_k M\cong X$.\end{proof}

\begin{teo}\label{lema1}
Using the notation of Subsection \ref{mainresult}, we have that, for each $n\in \Z_{>0}$:
\begin{enumerate}
\item For $R\in K[\A_n]_{\reg}$, the representation $M(R)$ of $\Lambda$ is simple regular if and only if $\phi_n(R)\in k_n$ is generic. In this case, $\End_\Lambda(M(R))\cong k_n$. 

If $R,\,R'\in K[\A_n]_{\reg}$, then $M(R)\cong M(R')$ if and only if $\phi_n(R)=\Si_n^m(\phi_n(R'))$ for some $1\leq m\leq n-1$.

\item The representations $M_{(1)}$  and $M_{(2)}$ are simple regular with $\End_\Lambda(M_{(1)})\cong k_{2}$ and $\End_\Lambda(M_{(2)})\cong k_{4}$. 
\item Each simple regular representation of $\Lambda$ is isomorphic to  one of the types in  {\em a)} or {\em b)}.
\end{enumerate}
\end{teo}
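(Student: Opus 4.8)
The plan is to assemble the results of Section~\ref{sec3} with the classification of the ii-simple regular representations of $\widetilde{\mathsf{D}}_4$ obtained in Subsection~\ref{Invariantrep}. Part~(b) is precisely Proposition~\ref{prop2.9}, so the real work lies in (a) and (c). Throughout one sets $d=\mcm(4,\,n)$, and (when dealing with (c)) $d=\mcm(4,\,m)$ where $k_m$ is the endomorphism ring in play; recall from the remarks preceding Proposition~\ref{prop2.8} the notation $N_{\phi_n(R)}=\bigoplus_{j=0}^{n-1}M_{t_j}$ with $t_j:=\tfrac{1}{2}-i\,\Si_n^{\,j}(\phi_n(R))\E^{1/2}$.

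For the equivalence in (a), the first step is to observe that the isomorphism $\phi$ built in the proof of Proposition~\ref{prop2.8} never uses that $\phi_n(R)$ is generic: its verification invokes only Equation~\eqref{2.6}, the commutativity of $K[\A_n]$ and Lemma~\ref{lema2}. Hence for \emph{every} $R\in K[\A_n]_{\reg}$ one has $k_d\otimes_k M(R)\cong N_{\phi_n(R)}$. The second step is an orbit computation: since $\Si_d|_{k_n}=\Si_n$ and $\Si_d(\E^{1/2})=\zeta_d^{\,d/2}\E^{1/2}=-\E^{1/2}$, the formula $\,^{\G_d}M_t\cong M_{1-\Si_d(t)}$ (valid for $t\neq0,1$, which is automatic here) gives $\,^{\G_d}M_{t_j}\cong M_{t_{j+1}}$, indices mod $n$, so $N_{\phi_n(R)}=\bigoplus_{j=0}^{n-1}\,^{\G_d^{\,j}}M_{t_0}$. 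By the description of ii-indecomposables in Subsection~\ref{Invariantrep}, this is ii-indecomposable exactly when $n$ is minimal with $\,^{\G_d^{\,n}}M_{t_0}\cong M_{t_0}$, i.e. exactly when the $t_j$ are pairwise distinct, i.e. exactly when $\phi_n(R)$ is generic; in that case all summands $M_{t_j}$ are simple regular, so $N_{\phi_n(R)}$ is ii-simple regular and $M(R)$ is simple regular by Proposition~\ref{prop2.8} (or Corollary~\ref{coro2.10}). Conversely, if $M(R)$ is simple regular then by items~e)--g) of Section~\ref{sec3} the module $k_d\otimes_k M(R)$ is ii-simple regular, hence ii-indecomposable, so $\phi_n(R)$ is generic by the equivalence just shown; alternatively, a non-generic $\phi_n(R)$ would give $N_{\phi_n(R)}$ a repeated indecomposable summand, so $\End_{k_d\widetilde{\mathsf{D}}_4}(N_{\phi_n(R)})$ would contain a matrix block of size $>1$, contradicting that $k_d\otimes_k\End_\Lambda(M(R))\cong k_d\otimes_k k_m$ is a product of fields by item~a) of Section~\ref{sec3}.

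Still assuming $\phi_n(R)$ is generic, the $n$ summands $M_{t_j}$ of $N_{\phi_n(R)}$ are pairwise Hom-orthogonal with $\End_{k_d\widetilde{\mathsf{D}}_4}(M_{t_j})\cong k_d$, so $k_d\otimes_k\End_\Lambda(M(R))\cong\End_{k_d\widetilde{\mathsf{D}}_4}(N_{\phi_n(R)})\cong k_d^{\,n}$; thus $\End_\Lambda(M(R))$ is an $n$-dimensional division algebra over the quasi-finite field $k$, i.e. $\cong k_n$. For the isomorphism criterion, Corollary~\ref{coro2.10} (whose correspondence identifies $X$ with $k_d\otimes_k X$, or alternatively the Noether--Deuring theorem, legitimate since $\Lambda$ is finite dimensional over $k$) reduces $M(R)\cong M(R')$ to $N_{\phi_n(R)}\cong N_{\phi_n(R')}$; by Krull--Schmidt together with the implication $M_t\cong M_s\Rightarrow t=s$, this means the families $(t_j)$ attached to $R$ and $R'$ coincide up to order, i.e. $\phi_n(R')=\Si_n^{\,m}(\phi_n(R))$ for some $m$, which is the asserted condition.

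For part~(c), let $X$ be an arbitrary simple regular $\Lambda$-module, write $\End_\Lambda(X)\cong k_m$ and set $d=\mcm(4,\,m)$. By items~e)--g) of Section~\ref{sec3} and Corollary~\ref{coro2.10}, $k_d\otimes_k X$ is an ii-simple regular representation of $\widetilde{\mathsf{D}}_4$, so by the classification of Subsection~\ref{Invariantrep} it is either $N_1\oplus\,^{\G}N_1$, or $\bigoplus_{j=0}^{3}\,^{\G^{j}}N_2$, or $\bigoplus_{j=0}^{m-1}\,^{\G_d^{\,j}}M_{\frac{1}{2}+s\E^{1/2}}$ for some generic $s\in k_m$. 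In the first two cases Proposition~\ref{prop2.9} gives $X\cong M_{(1)}$ or $X\cong M_{(2)}$. In the last case $is\in k_m$ is again generic, and by Proposition~\ref{prop1.4} together with Table~\ref{table1} there is $R\in K[\A_m]_{\reg}$ with $\phi_m(R)=is$ (the only value not attained by $\phi_m$ is $0$, and only when $4\mid m$, where it is not generic anyway); since $-i\,\Si_m^{\,j}(is)=\Si_m^{\,j}(s)$, we obtain $k_d\otimes_k M(R)\cong N_{\phi_m(R)}=\bigoplus_{j=0}^{m-1}M_{\frac{1}{2}+\Si_m^{\,j}(s)\E^{1/2}}\cong k_d\otimes_k X$, hence $X\cong M(R)$ by Corollary~\ref{coro2.10}. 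The step demanding the most care is the genericity equivalence in (a): one must pin down exactly where (and where not) genericity enters the proof of Proposition~\ref{prop2.8} and keep track of the orbit sizes $|C_n\cdot\phi_n(R)|$; the remaining arguments are routine bookkeeping with the tools already developed.
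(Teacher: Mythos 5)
Your proof is correct and follows essentially the same route as the paper: both rely on the explicit isomorphism $k_d\otimes_k M(R)\cong N_{\phi_n(R)}$ from Proposition~\ref{prop2.8}, the ii-classification of Subsection~\ref{Invariantrep}, Proposition~\ref{prop2.9} for part~(b), and Corollary~\ref{coro2.10} for part~(c). The paper's own proof is much terser (it just cites Propositions~\ref{prop2.3}, \ref{prop2.8}, \ref{prop2.9} and Corollary~\ref{coro2.10}), so your write-up usefully supplies details the paper leaves implicit—in particular the observation that the construction of $\phi$ in Proposition~\ref{prop2.8} does not invoke genericity, which is precisely what makes the ``only if'' direction of (a) go through, and the descent computation $k_d\otimes_k\End_\Lambda(M(R))\cong k_d^{\,n}$ pinning down the endomorphism ring.
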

\begin{proof}
\begin{enumerate}
\item The first part follows from Propositions \ref{prop2.3} and \ref{prop2.8}. 

The second part follows from the fact that, for any $R\in K[\A_n]_{\reg}$, we have the isomorphism $N_{\phi_n(R)}\cong\bigoplus_{j=0}^{n-1}M_{\frac{1}{2}-i\Si_n^j(\phi_n(R))\E^\frac{1}{2}}$. Consider  $R$ and $R'$ be in $K[\A_n]_{\reg}$. Then,  we have $M(R)\cong M(R')$ if and only if  $N_{\phi_n(R)}\cong N_{\phi_n(R')}$ if and only if $\bigoplus_{j=0}^{n-1}M_{\frac{1}{2}-i\Si_n^j(\phi_n(R))\E^\frac{1}{2}}\cong\bigoplus_{j=0}^{n-1}M_{\frac{1}{2}-i\Si_n^j(\phi_n(R'))\E^\frac{1}{2}}$ if and only if $\phi_n(R)=\Si_n^m(\phi_n(R'))$ for some $1\leq m\leq n-1$.

\item See Proposition \ref{prop2.9}. 

\item This follows from the Corollary \ref{coro2.10}.
\end{enumerate}
\end{proof}
\begin{remark}
It is an interesting fact that, for almost every simple regular representation $M$ with $\underline{\dim} M=(n,\,2n)$, we have $\End_\Lambda(M)\cong k_n$. The only exceptions are  when $M$ is isomorphic to  $M_{(1)}$ or $M_{(2)}$. In these cases, we have $\End_\Lambda(M_{(1)})\cong k_2$ and $\End_\Lambda(M_{(2)})\cong k_4$, respectively.
\end{remark}

\begin{remark}\label{remark2.3}{Using Theorems \ref{lema1} and \ref{teorema1.5}, we can conclude that  any simple regular representation is isomorphic to one of the following forms: $M_{(1)}$, $M_{(2)}$ or $M_n^{(j)}(s)$, where $[\id_n|R_n^{(j)}(s)]$ is the matrix of the last one, with $s\in k_n$ such that $\phi_n(R_n^{(j)}(s))$ (see Table \ref{table1}) is generic.}
\end{remark}
%%%%%%%%%%%%%%%%%%%%%%%%%%%%%%%% 
  \section{The canonical algebras of bimodule type \texorpdfstring{$(1,\,4)$}{(1,4)}.}\label{sec4}
  \subsection{Canonical algebras, Ringel's definition}\label{sec4.1}
 We review Ringel's definition \cite[Section 1]{Ri90} of canonical algebras over an arbitrary field for our setting.
 
  Recall that $k\coloneqq \C\parE$ is a quasi-finite field, see Section \ref{quasi1.2}. Thus, the finite dimensional division algebras we have to consider are all of the form $k_n=k[\E^\frac{1}{n}]$ for some $n\in\Z_{>0}$. We will only consider bimodules on which $k$ acts centrally. The category of these $k_a$-$k_b$ bimodules is equivalent to the category of $k_a\otimes_k k_b$ left modules.

For a $k_a$-$k_b$-bimodule $X$ and an automorphism $\Si_a$ of $k_a$, we define by $^{\Si_a}X$ the bi\-mo\-du\-le with left multiplication $y*x\coloneqq \Si_a(y)x$ for any $x\in X,\, y\in k_a$, and the right multiplication is the usual multiplication in $X_{k_b}$. One can define  similarly $X^{\Si_b}$ as a  $k_a$-$k_b$-bimodule.

In fact, from the above remarks, the category of $k$-$K$-bimodules with $k$ acting centrally is equivalent to the category of $k\otimes_k K$-modules.  

It is easy to see that $k_a\otimes_kk_b\cong k_{\mcm(a,\,b)}\times\cdots\times k_{\mcm(a,\,b)}\mbox{(${\mcd(a,\,b)}$ copies)}$ as a $k$-algebra. Thus, the category of $k_a$-$k_b$-bimodules is semisimple with $\mcd(a,\,b)$ isoclasses of simple objects.
  
Let  $a,\, b\in\N$ and $X=\,_{a}X_{b}$ be a $k_a$-$k_b$-bimodule such that $\dim\,_{k_a}X\dim X_{k_b}=4$. Then, it is easy to see that, in our situation, only two cases can occur: 
\begin{enumerate}
\item $k_a=K,\,k_b =k$ and  $X=K$.  
\item $k_a=k_b$ and $X=k_a\oplus \,^{\Si_a}k_a$, for some $a\in \N$. 
\end{enumerate}

In this paper, we consider only the first case.

Consider the set of isomorphism classes of simple regular representations of $X$, we denote by $\Phi$. Let $T\colon \Phi\lra \Z_{>0}$ be a function with $T(M)=1$ for almost all $M\in \Phi$.

Let $M_{1},\,M_{2},\cdots,\,M_{t}$ be the pairwise non-isomorphic elements of $\Phi$ with $l_i\coloneqq T(M_i)$ greater than $1$. We consider $M_i$ as a representation of $X$, with $\underline{\dim}M_i=(n_i,\,2n_i)$, e.g. as a $K$-linear map $\varphi_i\colon  K\otimes_{k} k^{2n_i}\lra K^{n_i}$. Let $k_{m_i}$ be the endomorphism ring of the representation $M_i$. Recall that $m_i=n_i$ when $M_i$ is homogeneous and $m_i=2n_i$ when $M_i$ is not homogeneous. Then, $k^{2n_i}$ is a $k$-$k_{m_i}^{\op}$ bimodule, and $K^{n_i}$ is a $K$-$k_{m_i}^{\op}$ bimodule. Thus, $\varphi_i$ is $K$-$k_{m_i}^{\op}$-linear. Let $M_i^+\colon V_i^{+}\lra K\otimes_{k} k^{2n_i}$ be the kernel of this map $\varphi_i$. Then, $V_i^+$ is, again, a $K$-$k_{m_i}^{\op}$-bimodule, and $M_i^+$ is $K$-$k_{m_i}^{\op}$-linear. 

Since $k^{2n_i}$ is $k$-$k_{m_i}^{\op}$-bimodule, $U_i^*\coloneqq \Hom_{k}(k^{2n_i},k)$ is a $k_{m_i}^{\op}$-$k$-bimodule, and we can consider the adjoint map $\widetilde{M_i}\colon  V_i^+\otimes_{k_{m_i}}U_i^*\lra K$ of $M_i^{+}$. 

We will see in the next section that $\widetilde{M}$ is surjective for any $M\in \Phi$. Following Ringel, we consider the species $\mathcal{S}=\mathcal{S}(T)$ which is displayed in terms of a decorated quiver in Figure \ref{figure1}.
\begin{figure}[ht]
\begin{center}
\begin{tikzpicture}
[->,>=stealth',shorten >=1pt,auto,node distance=2.3cm,thick,main node/.style=]
  \node[main node] (1) {$1_1$};   
  \node[main node] (5) [right  of=1]{$1_2$};   
 \node[main node] (6) [right  of=5]{$\cdots$};   
 \node[main node] (7) [right  of=6]{$1_{l_1-1}$};   
\node[main node] (2) [below  of=1] {$2_1$};
  \node[main node] (8) [right  of=2]{$2_2$};  
 \node[main node] (9) [right  of=8]{$\cdots$};   
 \node[main node] (10) [right  of=9]{$2_{l_2-1}$};   
 \node[main node] (11) [below right  of=10]{$k$};   
\node[main node] (3) [below left of=2] {$K$};
  \node[main node] (4) [below right of=3] {$t_1$};
  \node[main node] (12) [right  of=4]{$t_2$};   
 \node[main node] (13) [right  of=12]{$\cdots$};   
 \node[main node] (14) [right  of=13]{$t_{l_t-1}$}; 
 \node[main node](15)[right of=3]{$\vdots\,\,\,\,\,\,\,\,\,\,\,\,\,\,\,\,\,\,\,\,$};    
 \node[main node](16)[right of=15]{$\vdots\,\,\,\,\,\,\,\,\,\,\,\,\,\,\,\,\,\,\,\,$};    
 \node[main node](17)[right of=16]{};    
 \node[main node](18)[right of=17]{$\vdots\,\,\,\,\,\,\,\,\,\,\,\,\,\,\,\,\,\,\,\,$};    
\path[every node/.style={font=\sffamily\small}]     
(1) edge node   [left]{$V_1^+$} (3)        
 (5) edge node         {$k_{m_1}$} (1)             
 (6) edge node         {$k_{m_1}$} (5)             
(7) edge node         {$k_{m_1}$} (6)             
 (8) edge node         {$k_{m_2}$} (2)             
 (9) edge node         {$k_{m_2}$} (8)             
(10) edge node         {$k_{m_2}$} (9)             
 (12) edge node         {$k_{m_t}$} (4)             
 (13) edge node         {$k_{m_t}$} (12)             
(14) edge node         {$k_{m_t}$} (13)             
(2) edge node         {$V_2^{+}$} (3)             
  (4) edge node  {$V_t^{+}$} (3) 
  (11)edge node [right]{$U_1^*$}(7)
  edge node {$U_2^{*}$}(10)
  edge node {$U_t^{*}$}(14); 
\end{tikzpicture}
\end{center}
\caption{Modulated quiver of a canonical algebra of type $(1,\,4)$, where the field of the vertex $i_j$ is $\End (M_i)$.}\label{figure1}
\end{figure}
Let $\mathcal{T}$ be the tensor algebra of $\mathcal{S}$, and $\mathcal{R}$ be the ideal of $\mathcal{T}$ which is generated by the elements of the form 
$$v\otimes\underbrace{1\otimes\cdots\otimes1}_{l_i-2}\otimes u-\sum_rv_r\otimes\underbrace{1\otimes\cdots\otimes1}_{l_1-2}\otimes u_r,$$ 
  with $v\in V_i^+$, $u\in U_i^*$, all $v_r\in  V_1^+$, all $u_r\in U_1^*$, and such that $\widetilde{M_i}(v\otimes u)= \sum_r\widetilde{M_1}(v_r\otimes u_r)$. (These are elements in $(V_i^+\otimes k_{m_i}\otimes\cdots\otimes k_{m_i}\otimes U_i^*)\oplus(V_1^+\otimes k_{m_1}\otimes\cdots\otimes k_{m_1}\otimes U_1^*)$).
  
The {\bf canonical algebra of type} $T$ (over $K$) is, by definition, $C=C(T)=\mathcal{T}/\mathcal{R}$.
\subsection{The adjoint map}
Now, we want to describe the canonical algebras associated to the species of type $(1,\,4)$ over the field $k=\C\parE$. According to Ringel \cite{Ri90}, the main task for this end is to describe, for a map $\varphi_M\in \Hom(K\otimes U, V)$ of a regular representation $M$  with endomorphism ring $D$, the adjoint map $\widetilde{M}\colon  V^+\otimes_D U^*\lra K$, where $U^*\coloneqq \Hom_k(U,k)$ and $M^+\colon  V^+\lra K\otimes U$ is the kernel of $M$. Recall that in our situation, $D=k_m$ for some $m\in\N$, and $U=k^{2n}$ and $V=K^n$ for some $n$,  see Section \ref{sec1.3}. As we mentioned  in the last section, by construction, $V^+$ is a $K$-$D^{\op}$-bimodule, and $U^*$ is a $D^{\op}$-$k$-bimodule. Since $D\cong k_m$ and this is a commutative ring, we have $D^{\op}=D$.  As we said in Remark \ref{remark2.3}, any simple regular representation is isomorphic to one of the following forms: 
\begin{enumerate}
\item $M_{(1)}$, whose matrix is $(1,\E^\frac{1}{2})$.
\item $M_{(2)}$, whose matrix is $(\id_2|\A_2\E^\frac{1}{4}-i\id_2\E^\frac{3}{4})$.
\item $M_n^{(j)}(s)$, whose matrix is $[\id_n|R_n^{(j)}(s)]$, and is such  that $\phi_n[R_n^{(j)}(s)]=-i\frac{\nu_2^2-\nu_1\nu_3}{\nu_1^2-\nu_3^2\E}$ is generic, where  $R_n^{(j)}(s)=\A(\nu_1)\E^\frac{1}{4}+\A(\nu_2)\E^\frac{1}{2}+\A(\nu_3)\E^\frac{3}{4}$ is as in Table \ref{table1}, with $s\in k_n$. 
\end{enumerate}
Then, we have three cases, and one of them has five subcases. 
\begin{table}[ht]
\centering
\scalebox{0.95}{\begin{tabular}{|c|c|c|c|c|c|}\hline
$M$&$n$  & $D$ &$U^*$& $V^+$&$B$ a basis of $V^+\otimes_DU^*$\\ \hline
$M_{(1)}$&$1$&$k_2$& $k_2$&$K^{\Si}$ &$\{\B'_0\otimes e_1\}$\\ \hline
$M_{(2)}$&$2$&$K$& $K$&$K^{\Si}\times K^{\Si^2}$ &$\{\B'_0\otimes e_1,\,\B'_1\otimes e_1\}$\\ \hline
$M_n^{(r)}(s)$&$n\equiv1(\modu2)$&$k_n$& $k_n\times k_n$&$k_{4n}$&$\{\B'_j\otimes e_i\}_{0\leq j\leq n-1,\atop  i\in\{1,\,2\}}$ \\ \hline
$M_n^{(r)}(s)$&$n\equiv2(\modu4)$&$k_n$& $k_n\times k_n$&$k_{2n}\times k_{2n}$ &$\{\B'_{j,\,l}\otimes e_i\}_{l\in\{0,1\},\, i\in\{1,\,2\}, \atop 0\leq j\leq \frac{n-2}{2}}$\\ \hline
$M_n^{(r)}(s)$&$n\equiv0(\modu4)$&$k_n$& $k_n\times k_n$&$k_{n}\times k_{n}\times k_{n}\times k_{n}$&$\{\B'_{j,\,l}\otimes e_i\}_{{0\leq l\leq 3,\, i\in\{1,\,2\},\atop0\leq j\leq \frac{n-4}{4}}}$\\\hline
\end{tabular}}
\caption{Setup for the definition of the adjoint map of a simple regular representation in Proposition \ref{5.1}. We denote $1$, in the case of $M\cong M_{(1)}$ or $M_{(2)}$. Abusing notation by $e_1$.}
\label{table2}
\end{table}
\begin{prop}\label{5.1}
Let $M$ be a simple regular representation of $\Lambda$. Then, the following cases cover all possibilities for the adjoint  map.
\begin{enumerate}
\item Let $M=M_{(1)}$. Then, the adjoint map $\widetilde{M}_{(1)}$  is defined by $\B'_0\otimes 1\mapsto\E^\frac{1}{2}$, where $$\B'_0\coloneqq \E^\frac{1}{2}\otimes\tilde{e}_1-1\otimes\tilde{e}_2.$$ 
\item Let $M=M_{(2)}$. Then, the adjoint map $\widetilde{M}_{(2)}$ is defined by 
$$\B'_0\otimes 1\mapsto\E^\frac{-1}{2}\mbox{ and }\B'_1\otimes 1\mapsto-\E^\frac{-1}{2},$$
 where $\B'_j\coloneqq \B_0+(i)^{j+1}\E^\frac{1}{4}\B_1$, with $\B_0\coloneqq -i\E^\frac{-1}{2}\otimes\tilde{e}_1+(1+i)\E^\frac{-1}{4}\otimes\tilde{e}_2-1\otimes\tilde{e}_3\mbox{ and }$
$\B_1\coloneqq (1-i)\E^\frac{-3}{4}\otimes\tilde{e}_1+i\E^\frac{-1}{2}\otimes\tilde{e}_2-1\otimes\tilde{e}_4\mbox{.}$
\item Let $M=M_n^{(j)}(s)$,  whose matrix is $[\id_n|R_n^{(j)}(s)]$, with $R^{(j)}_n(s)$ as in Table \ref{table1}, for some $s=\sum_{j=0}^{n-1}s_j\E^\frac{j}{n}\in k_n$. In the following five  subcases,  we will consider $\D_{ij}$ as the Kronecker delta, $\B_0\coloneqq \sum_{i=0}^{n}(\sum_{j=1}^3\nu_{j\,n-i}\E^\frac{j}{4})\otimes\tilde{e}_i-1\otimes\tilde{e}_{2n}$ and  $\B_l=\B_0*\E^\frac{l}{n}$ for $1\leq l\leq n-1$. The adjoint map of $M$ for these cases appears in Table \ref{table3}.
 \end{enumerate}
\end{prop}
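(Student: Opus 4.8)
The plan is to run, case by case, the construction of Section~\ref{sec4.1}. The first step, common to all cases, is the identification of the kernel. If $M$ has defining matrix $[\id_n\mid R]$ (this covers $M_{(1)}$, with $n=1$ and $R=\E^\frac{1}{2}$, and $M_{(2)}$, with $n=2$ and $R=\A_2\E^\frac{1}{4}-i\id_2\E^\frac{3}{4}$), then $\varphi_M\colon K\otimes_k k^{2n}=K^{2n}\lra K^n$ sends $(u,u')\mapsto u+Ru'$, so its kernel is $V^+=\{(-Ru',u')\mid u'\in K^n\}$ and $M^+\colon V^+\hookrightarrow K\otimes_k k^{2n}$ exhibits $V^+$ as a free left $K$-module of rank $n$, with generators $(-Ru',u')$ as $u'$ runs over a $K$-basis of $K^n$. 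Writing $\tilde{e}_1,\dots,\tilde{e}_{2n}$ for the standard basis of $U=k^{2n}$, each such generator is exactly one of the explicit elements $\sum_i w_i\otimes\tilde{e}_i$ of $K\otimes_k U$ that occur in the definitions of the $\B'_j$.

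The second step fixes the bimodule structures. An endomorphism of $M$ is a pair $(f_2,f_1)\in\M_{n\times n}(K)\times\M_{2n\times 2n}(k)$ with $f_2[\id_n\mid R]=[\id_n\mid R]f_1$; it acts on $K\otimes_k U$ through $\id_K\otimes f_1$, hence on $V^+$, after transport along the identification above, through left multiplication by $f_2$. For the homogeneous families the $f_2$ run through the image of the embedding $k_n\stackrel{\sim}{\lra}k[\A_n]\subset K[\A_n]$ of Lemma~\ref{1.1}, so combining Lemma~\ref{lema2} (which conjugates $\A(x)$ to $\diag(x,\Si_n(x),\dots,\Si_n^{n-1}(x))$) with the $K$-algebra isomorphisms of Lemma~\ref{2.3.1} identifies $V^+$, as a $K$-$k_n$-bimodule, with $k_{4n}$ for $n$ odd, with $k_{2n}\times k_{2n}$ for $n\equiv 2\,(\modu 4)$, and with $k_n\times k_n\times k_n\times k_n$ for $n\equiv 0\,(\modu 4)$; the same diagonalisation, applied to the dual action of $D$ on $U^*=\Hom_k(U,k)$, gives $U^*\cong k_n\times k_n$ with its two idempotents $e_1,e_2$. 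For $M_{(1)}$ and $M_{(2)}$ the endomorphism ring is $k_2$, respectively $k_4=K$, the matrix $f_1$ is found by the same elementary computation, and one reads off $V^+\cong K^{\Si}$, respectively $K^{\Si}\times K^{\Si^2}$, and $U^*\cong k_2$, respectively $K$. This is the content of Table~\ref{table2}.

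With these identifications $\widetilde M$ is determined: it is the image of $M^+$ under the canonical isomorphism $\Hom_K(V^+,K\otimes_k U)\cong\Hom_K(V^+\otimes_k\Hom_k(U,k),K)$, that is, if $v\in V^+$ corresponds to $\sum_i w_i\otimes\tilde{e}_i$ and $\xi\in U^*$ then $\widetilde M(v\otimes\xi)=\sum_i w_i\,\xi(\tilde{e}_i)\in K$; since $M^+$ is right $D$-linear this $k$-bilinear pairing is $D$-balanced and descends to $V^+\otimes_D U^*$. It then remains to evaluate it on the $K$-bases $B$ of $V^+\otimes_D U^*$ listed in Table~\ref{table2}. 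For $M_{(1)}$ the kernel generator $(-\E^\frac{1}{2},1)$ gives $\B'_0\otimes e_1\mapsto\E^\frac{1}{2}$ at once; for $M_{(2)}$ one first brings $\A_2\E^\frac{1}{4}-i\id_2\E^\frac{3}{4}$ to diagonal form by Lemma~\ref{lema2}, writes the kernel generators in the $\Si$- and $\Si^2$-components, and gets $\B'_0\otimes e_1\mapsto\E^{-\frac{1}{2}}$ and $\B'_1\otimes e_1\mapsto-\E^{-\frac{1}{2}}$; for the three homogeneous subcases one substitutes $R_n^{(j)}(s)=\A(\nu_1)\E^\frac{1}{4}+\A(\nu_2)\E^\frac{1}{2}+\A(\nu_3)\E^\frac{3}{4}$ from Table~\ref{table1}, passes to the diagonalised form via Remark~\ref{remark244}, and reads off the entries of Table~\ref{table3}.

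The main obstacle is bookkeeping rather than any isolated difficulty: the left-$K$ and right-$D$ actions on $V^+$ and on $U^*$ each carry Galois twists (the $\Si,\Si^2$ visible in Table~\ref{table2} and the several $\Si_n^m$ in Table~\ref{table3}), and the isomorphisms of Lemma~\ref{2.3.1} and the identification $U^*\cong k_n\times k_n$ must be chosen compatibly with those twists so that the distinguished elements $\B'_j$ and idempotents $e_i$ are literally the ones named in the statement. One must also check that the pairing $(v,\xi)\mapsto\sum_i w_i\,\xi(\tilde{e}_i)$ really is $D$-balanced, which is precisely where the right $D$-linearity of $M^+$ from the second step enters. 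Once all identifications are pinned down, evaluating $\widetilde M$ on the finitely many basis elements in each of the $2+5$ cases is mechanical, and the formulas in the statement and in Table~\ref{table3} drop out.
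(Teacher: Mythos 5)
Your proposal follows the same strategy as the paper's proof: compute the kernel $V^+$ directly from the matrix $[\id_n\mid R]$, use Lemma~\ref{lema2} and Lemma~\ref{2.3.1} to pin down the $K$-$D$-bimodule structure of $V^+$ (and the dual picture on $U^*$), and then evaluate the adjoint pairing $\widetilde{M}(v\otimes\xi)=\sum_i w_i\,\xi(\tilde{e}_i)$ on the explicit bases of Table~\ref{table2}. The only minor deviation is in the $M_{(2)}$ case, where you diagonalize $\A_2\E^{1/4}-i\id_2\E^{3/4}$ via Lemma~\ref{lema2}, whereas the paper replaces $M_{(2)}$ by an isomorphic representation with the kernel generators written out directly; both routes give the same outcome and yours is arguably more uniform with the homogeneous cases.
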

 \begin{sidewaystable}
 \centering
\begin{tabular}{|c|c|c|}\hline
$M$ & $\widetilde{M}$&$\B'_{j,\,l}$'s\\ \hline
& &\\
$\begin{array}{c}M_n^{(1)}(s)\\\mbox{and}\\n\equiv1\,(\modu 2)\end{array}$  &$\begin{array}{ccl}\B'_{j,\,0}\otimes e_1&\mapsto&\left\{\begin{array}{ll}s_{\frac{n-3}{2}-j}\E^\frac{1}{2}&0\leq j\leq \frac{n-3}{2}\\
s_{3\frac{n-1}{2}-j}\E^\frac{3}{2}+\delta_{n-1,\,j}e^\frac{\pi i}{4}\E^\frac{3}{4}&\frac{n+1}{2}\leq j \leq n-1.\end{array}\right.\\
\B'_{j,\,0}\otimes e_2&\mapsto& -\delta_{n-1,\,j}\end{array}$& $\B'_{j,\,0}=\B'_j$\\ &&\\ \hline&&\\
$\begin{array}{c}M_n^{(2)}(s)\\\mbox{and}\\n\equiv1\,(\modu 2)\end{array}$&$\begin{array}{ccl}\B'_{j,\,0}\otimes e_1&\mapsto &\delta_{0,\,n-(j+1)}e^\frac{3\pi i}{4}\E^\frac{1}{4}+s_{n-(j+1)}\E^\frac{1}{2}\\\B'_{j,\,0}\otimes e_2&\mapsto& -\delta_{n-1,\,j}\end{array}$&$\B'_{j,\,0}=\B'_j$ \\ &&\\ \hline&&\\
$\begin{array}{c}M_n^{(1)}(s)\\\mbox{and}\\n\equiv2\,(\modu 4)\end{array}$ &$\begin{array}{ccl}
\B'_{j,\,l}\otimes e_1&\mapsto &\frac{\left(s_{n-(j+1)}+(-1)^l\delta_{\frac{n-2}{2},\,j}\right)\E^\frac{1}{4}+(-1)^ls_{\frac{n}{2}-(j+1)}\frac{\E^\frac{3}{4}}{\E}}{2}\\
\B'_{j,\,l}\otimes e_2&\mapsto&\frac{(-\delta_{\frac{n-2}{2}\,j})^{l+1}\E^\frac{-1}{2}}{2}
\end{array}$&$\begin{array}{c}\frac{1}{2}\left(\B_j+(-1)^l\E^\frac{-1}{2}\B_{\frac{n+2j}{2}}\right)\\ 0\leq j\leq \frac{n-2}{2},\, l\in\{0,1\}\end{array}$
\\&&\\ \hline&&\\
$\begin{array}{c}M_n^{(2)}(s)\\\mbox{and}\\n\equiv2\,(\modu 4)\end{array}$  &$\begin{array}{ccl}
\B'_{j,\,l}\otimes e_1&\mapsto &\left\{\begin{array}{ll}\frac{(1-i)\left(s_{\frac{n}{2}-(2+j)}\E^\frac{1}{2}+(-1)^ls_{n-(2+j)}\E\right)}{2}&0\leq j\leq \frac{n-4}{2}\\\frac{(1-i)(s_{n-1}\E^\frac{3}{2}+(-1)^ls_\frac{n-2}{2}\E)+(i+(-1)^l)\E^\frac{1}{4}}{2}&j=\frac{n-2}{2}
\end{array}\right.\\
\B'_{j,\,l}\otimes e_2&\mapsto&\frac{(-\delta_{\frac{n-2}{2}\,j})^{l+1}\E^\frac{-1}{2}}{2}
\end{array}$&$\begin{array}{c}\frac{1}{2}\left(\B_j+(-1)^l\E^\frac{-1}{2}\B_{\frac{n+2j}{2}}\right)\\ 0\leq j\leq \frac{n-2}{2},\, l\in\{0,1\}\end{array}$\\& &\\ \hline &&\\
$\begin{array}{c}M_n^{(1)}(s)\\\mbox{and}\\n\equiv0\,(\modu 4)\end{array}$&$\begin{array}{ccl}
\B'_{j,\,l}\otimes e_1&\mapsto&\frac{1}{4}\left[\left(\sum_{r=0}^3i^{lr}s_{\frac{n(4-r)}{4}-(j+1)}\E^\frac{1-r}{4}\right)+\D_{\frac{n-4}{4},\,j}i^{2l}(1-i)\right]\\
\B'_{j,\,l}\otimes e_2&\mapsto&\frac{i^{6-l}\delta_{\frac{n-4}{4}\,j}\E^\frac{-3}{4}}{4}\end{array}$&$\begin{array}{c}\frac{1}{4}\left(\B_j+i^l\E^\frac{-1}{4}\B_{\frac{n}{4}+j}+i^{2l}\E^\frac{-1}{2}\B_{\frac{n}{2}+j}+i^{3l}\E^\frac{-3}{4}\B_{\frac{3n}{4}+j}\right)\\ 0\leq j\leq \frac{n-4}{4},\,0\leq l\leq3\end{array}$\\& &\\ \hline
\end{tabular}
\caption{Adjoint map of a simple regular homogeneous representation. Here, $\D_{i,\,j}$ is the Kronecker delta.}\label{table3}
\end{sidewaystable}
\begin{proof}
We will verify our formula case by case, according to the classification of the simple regular representations in Theorem \ref{lema1}:
\begin{enumerate}
\item  Let $\varphi_{M_{(1)}}\colon K\otimes k^2\lra K$ be defined by $1\otimes \tilde{e}_1\mapsto1$ and $1\otimes \tilde{e}_2\mapsto\E^\frac{1}{2}$. We have 
$$D=\left\{\left(\left[\begin{array}{cc}a&\E b\\b&a\end{array}\right],\,a+b\E^\frac{1}{2}\right)|a,b\in k\right\}\cong k_2.$$  

  It is easy to see that $U^*\cong k_2$ as $D$-$k$-bimodule, with $e_1\mapsto\E^\frac{1}{2}$ and $e_2\mapsto 1$. 

Clearly,  $V^+$ is generated by $\B'_0\coloneqq \E^\frac{1}{2}\otimes\tilde{e}_1-1\otimes\tilde{e}_2$, as a $K$-module. Note that if $\B'_0*(a+b\E^\frac{1}{2})=(a-b\E^\frac{1}{2})\B'_0$,  then $V^+\cong K^\Si$.  One can check that $B\coloneqq \{\B'_0\otimes1\}$ is a $K$-basis of $V^+\otimes_D U^*$. Thus, $\widetilde{M}_{{(1)}}\colon V^+\otimes_D U^*\lra K$  is the adjoint map defined by $\B'_0\otimes 1\mapsto -1$. 
\item Let $\varphi_{M_{(2)}}\colon K\otimes k^4\lra K^2$ be the simple regular representation defined as above. One can see that $M_{(2)}$ is isomorphic to the representation whose map is defined by $1\otimes \tilde{e}_1\mapsto e_1$, $1\otimes \tilde{e}_2\mapsto e_2$, $1\otimes \tilde{e}_3\mapsto(-i\E^\frac{-1}{2},(1+i)\E^\frac{-1}{4})$ and $1\otimes \tilde{e}_4\mapsto((1-i)\E^\frac{-3}{4},i\E^\frac{-1}{2})$. We have that $D$ is
$$\left\{\left(\sum_{j=0}^3x_j\A_4^j,\,\left[\begin{array}{cc}x_0+(1-i)x_1\E^\frac{1}{4}-ix_2\E^\frac{1}{2}&x_1+(1-i)x_2\E^\frac{1}{4}-ix_3\E^\frac{1}{2}\\ \E x_3+ix_1\E^\frac{1}{2}+(1+i)x_2\E^\frac{3}{4}&x_0+ix_2\E^\frac{1}{2}+(1+i)x_3\E^\frac{3}{4}\end{array}\right]\right)| x_j\in k\right\}.$$  
Thus,  $D\cong K$. Then, $U^*\cong K$ as a $D$-$k$-bimodule. Let us define  
$$\B_0\coloneqq -i\E^\frac{-1}{2}\otimes\tilde{e}_1+(1+i)\E^\frac{-1}{4}\otimes\tilde{e}_2-1\otimes\tilde{e}_3\mbox{ and}$$
$$\B_1\coloneqq (1-i)\E^\frac{-3}{4}\otimes\tilde{e}_1+i\E^\frac{-1}{2}\otimes\tilde{e}_2-1\otimes\tilde{e}_4\mbox{.}$$
Then, $V^+$ is generated as a $K$-module by  $\B'_0\coloneqq \B_0+i\E^\frac{1}{4}\B_1$ and $\B'_1\coloneqq \B_0-\E^\frac{1}{4}\B_1$. Since $\B'_j*\E^\frac{1}{4}=i^{j+1}\E^\frac{1}{4}*\B'_j$, we have $V^+\cong K^\Si\times K^{\Si^2}$.  It is easy to see that $B\coloneqq \{\B'_0\otimes1,\,\B'_1\otimes1\}$  is a $K$-basis of $V^+\otimes_D U^*$. 
 
 Thus, $\widetilde{M}_{(2)}\colon V^+\otimes_D U^*\lra K$ is the adjoint map, defined by $\B'_0\otimes 1\mapsto\E^\frac{-1}{2}$ and $\B'_1\otimes 1\mapsto-\E^\frac{-1}{2}$.
 
 \item Let $M^{(j)}_n(s)$ be the representation whose matrix is $(\id_n|R^{(j)}_n(s))$, where $\phi_n(R^{(j)}_n(s))$ is generic. Recall that  $R^{(j)}_n(s)$ is equal  to $\A(\nu_1)\E^\frac{1}{4}+\A(\nu_2)\E^\frac{1}{2}+\A(\nu_3)\E^\frac{3}{4}$ as in Table \ref{table1}, with $s=\sum_{j=0}^{n-1}s_j\E^\frac{j}{n}\in k_n$.  Since $D\cong k_{n}$, we have  $U^*\cong k_{n}\times k_{n}$ as $D$-$k$-bimodule. 
 
 Let $\nu_i=\sum_{j=0}^{n-1}\nu_{ij}\E^\frac{j}{n}$ be in $k_n$. Next, we define $a_j\coloneqq \nu_{1\, j}\E^\frac{1}{4}+\nu_{2\,j}\E^\frac{1}{2}+\nu_{3\,j}\E^\frac{3}{4}\in K$, for $0\leq j\leq n-1$, and $\B_l\coloneqq \B_0*\E^\frac{l}{n}$, for $1\leq l\leq n-1$, where 
$$\B_0\coloneqq \sum_{i=1}^{n}a_{n-i}\otimes\tilde{e}_i-1\otimes\tilde{e}_{2n}.$$
Now, we have to distinguish several subcases according to the parity of $n$ modulo $4$.
\begin{enumerate}
\item Let $n\equiv 1(\modu2)$. One can see that $V^+$ is generated by $\{ \B_0,\,\B_1,\cdots,\B_{n-1}\}$ as a $K$-module. Moreover, $V^+\cong k_{4n}$. Now, to obtain the same notation, we will denote $\B'_{j,\,0}$ as $\B_j$.

 Calculation shows that $B=\{\B'_{j,\,0}\otimes e_i|0\leq j\leq n-1, i\in\{1,\,2\}\}$ is a $K$-basis of $V^+\otimes_D U^*$. So $\widetilde{M}\colon V^+\otimes_D U^*\lra K$ is defined by 
\begin{eqnarray*}
\B'_{j,\,0}\otimes e_1&\mapsto&a_{n-(j+1)}\\
\B'_{j,\,0}\otimes e_2&\mapsto&-\delta_{n-1\,j}.
\end{eqnarray*}
More precisely, we have that:
\begin{itemize}
\item If $M=M_n^{(1)}(s)$, then 
$$\B'_{j,\,0}\otimes e_1\mapsto\left\{\begin{array}{ll}s_{\frac{n-3}{2}-j}\E^\frac{1}{2}&0\leq j\leq \frac{n-3}{2}\\
s_{3\frac{n-1}{2}-j}\E^\frac{3}{2}+\delta_{n-1\,j}e^\frac{\pi i}{4}\E^\frac{3}{4}&\frac{n-1}{2}\leq j \leq n-1.\end{array}\right.$$
\item If $M=M_n^{(2)}(s)$, then $\B'_{j,\,0}\otimes e_1\mapsto \delta_{0\,n-(j+1)}e^\frac{3\pi i}{4}\E^\frac{1}{4}+s_{n-(j+1)}\E^\frac{1}{2}$.
\end{itemize}
 \item Let $n\equiv2 (\modu 4)$. For $0\leq j\leq\frac{n-2}{2}$ and $l\in\{0,\,1\}$, we define the element $\B'_{j,\,l}\coloneqq \frac{1}{2}\left(\B_j+(-1)^l\E^\frac{-1}{2}\B_{\frac{n+2j}{2}}\right)$ in $V^+$.  Since 
 $$\B_j=\left\{\begin{array}{cl}\B'_{j,\,0}+\B'_{j,\,1}&\mbox{ for  }0\leq j\leq \frac{n-2}{2}\\\E^\frac{1}{2}(\B'_{j-\frac{n}{2},\,0}-\B'_{j-\frac{n}{2},\,1})&\mbox{ for  }\frac{n}{2}\leq j\leq n-1,\end{array}\right.$$
 we have that $V^+$ is generated by $\{\B'_{j,l}|0\leq j\leq \frac{n-2}{2},\,l\in\{0,\,1\}\}$ as a $K$-module. Now, as in Lemma \ref{2.3.1}, we have $V^+\cong k_{2{n}}\times k_{2{n}}$ as a $K$-module, where $ \B'_{j,\,l}\mapsto \E^\frac{j}{n}e_{l+1}$. Moreover, from our definition of $\B_j$'s, we have $\B'_{j,\,l}*\E^\frac{1}{n}=\B'_{j+1,\,l}$ for $0\leq j\leq \frac{n-4}{2}$ and $\B'_{\frac{n-2}{2},\,l} *\E^\frac{1}{n}=\E\B'_{0,l}$. Hence, $V^+\cong k_{2{n}}\times k_{2{n}}$ as $K$-$k_n$-bimodules.
 
 It is easy to check that  $B\coloneqq \{\B'_{j,\,l}\otimes e_i|0\leq j\leq \frac{n-2}{2},\, l\in\{0,1\},\, i\in\{1,\,2\}\}$ is  a $K$-basis of $V^+\otimes_D U^*$. Thus, $\widetilde{M}\colon V^+\otimes_D U^*\lra K$ defined by 
\begin{eqnarray*}
\B'_{j,\,l}\otimes e_1&\mapsto&\frac{a_{n-(j+1)}+(-1)^la_{\frac{n}{2}-(j+1)}\E^\frac{-1}{2}}{2}
\\
\B'_{j,\,l}\otimes e_2&\mapsto&\frac{(-\delta_{\frac{n-2}{2}\,j})^{l+1}\E^\frac{-1}{2}}{2}.
\end{eqnarray*}is the adjoint map. More precisely, we have: 
\begin{itemize}
\item If $M=M_n^{(1)}(s)$, then
$$\B'_{j,\,l}\otimes e_1\mapsto \frac{\left(s_{n-(j+1)}+(-1)^l\delta_{\frac{n-2}{2},\,j}\right)\E^\frac{1}{4}+(-1)^ls_{\frac{n}{2}-(j+1)}\frac{\E^\frac{3}{4}}{\E}}{2}.$$
\item If $M=M_n^{(2)}(s)$, then 
$$\B'_{j,\,l}\otimes e_1\mapsto\left\{\begin{array}{ll}\frac{(1-i)\left(s_{\frac{n}{2}-(2+j)}\E^\frac{1}{2}+(-1)^ls_{n-(2+j)}\E\right)}{2}&0\leq j\leq \frac{n-4}{2},\\\frac{(1-i)(s_{n-1}\E^\frac{3}{2}+(-1)^ls_\frac{n-2}{2}\E)+(i+(-1)^l)\E^\frac{1}{4}}{2}&j=\frac{n-2}{2}.
\end{array}\right.$$
\end{itemize}
\item Let $n\equiv0(\modu 4)$. For  $0\leq j \leq \frac{n-4}{4}$,
$$\B'_{j,l\,}\coloneqq \frac{1}{4}\left(\B_j+i^l\E^\frac{-1}{4}\B_{\frac{n}{4}+j}+i^{2l}\E^\frac{-1}{2}\B_{\frac{n}{2}+j}+i^{3l}\E^\frac{-3}{4}\B_{\frac{3n}{4}+j}\right),$$
 with $0\leq l\leq 3$. For $\frac{rn}{4}\leq j\leq \frac{(r+1)n}{4}-1$ with $0\leq r\leq 3$, it is easy to see  that
 $$\B_j=\E^\frac{r}{4}\sum_{l=0}^3i^{(4-l)r}\B'_{j-\frac{rn}{4},\,l}.$$ 
 Hence,  $V^+$ is generated by $\{\B'_{j,\,l}|0\leq j\leq \frac{n-4}{4},\,0\leq l\leq3\}$, as a $K$-module. It follows that $V^+\cong k_{n}\times k_{n}\times k_{n}\times k_{n}$, where $\B'_{j,\,l}\mapsto \E^\frac{j}{n}e_{l+1}$. Moreover, from our definition of $\B_j$'s, we have that the isomorphism is of  $K$-$k_n$-bimodules.

One can verify that $B\coloneqq \{\B'_{j,\,l}\otimes e_i|0\leq j\leq \frac{n-4}{4}, 0\leq l\leq 3,\, i\in\{1,\,2\}\}$ is a $K$-basis of $V^+\otimes_D U^*$. In this case the unique option for $M$ is $M_n^{(1)}(s)$. Thus, $\widetilde{M}\colon V^+\otimes_D U^*\lra K$, defined by 
\begin{eqnarray*}
\B'_{j,\,l}\otimes e_1&\mapsto&\frac{1}{4}\left[\left(\sum_{r=0}^3i^{lr}s_{\frac{n(4-r)}{4}-(j+1)}\E^\frac{1-r}{4}\right)+\D_{\frac{n-4}{4},\,j}i^{2l}(1-i)\right]\\
\B'_{j,\,l}\otimes e_2&\mapsto&\frac{i^{6-l}\delta_{\frac{n-4}{4}\,j}\E^\frac{-3}{4}}{4}.
\end{eqnarray*}
 is the adjoint map in this case.
\end{enumerate}  
\end{enumerate}
\end{proof}

\begin{example}
{ Since $\phi[R_3^{(1)}(1)]=\E^\frac{1}{3}$, we have   $M_3^{(1)}(1)$ is a  simple regular representation. According to Table \ref{table3}, its adjoint map $\widetilde{M_3^{(1)}(1)}\colon k_{12}\otimes k_3^2\lra K$ is defined by 
$$1\otimes e_1\mapsto\E^\frac{1}{2},\,\E^\frac{1}{3}\otimes e_1\mapsto0,\,\E^\frac{2}{3}\otimes e_1\mapsto e^\frac{\pi i}{4}\E^\frac{3}{4},\,1\otimes e_2\mapsto0,\,\E^\frac{1}{3}\otimes e_2\mapsto0,\,\E^\frac{2}{3}\otimes e_2\mapsto -1.$$
}\end{example}
\subsection{The canonical algebras of \texorpdfstring{$\Lambda$}{Λ}}\label{3.3}
 Consider $t\geq 1$ and $l\coloneqq (l_1,\,l_2,\cdots,\,l_t)$ with $l_j\geq 2$. We consider  a family $M_1,\,M_2,\cdots,\,M_t$ of  simple regular representations that are pairwise non-isomorphic. Recall that $M_j$ is isomorphic to one of  $M_{(1)}$, $M_{(2)}$ or $M(R)$ (see Theorem \ref{lema1}). 
 
Following Ringel,  we take the species which is determined by the decorated quiver in Figure \ref{figure1}. Let $\mathcal{T}$  be the  tensor algebra of our species. For $x=v\otimes u\in B_j$, we define $\bar{x}\in\mathcal{T}$ as $\overline{x}\coloneqq  v\otimes\underbrace{1\otimes\cdots\otimes1}_{l_j-2}\otimes u$, where $B_j$ is the basis of $V_j^+\otimes_{D_j} U_j^*$  as in Table \ref{table2}.

 Without loss of generality, we can assume that there is an $x_0\in B_1$ such that $$\cogrado(\widetilde{M}_1(x_0))=\min\{\cogrado(\widetilde{M}_j(x))|x\in B_j,\,1\leq j\leq t\}.$$
 Recall that $\ima (\widetilde{M}_j)=K$, for all $1\leq j \leq t$. Let $\mathcal{R}$ be the ideal of $\mathcal{T}$ generated by the elements of the form
  $$v\otimes1\otimes\cdots\otimes1\otimes u-\sum_rv_r\otimes1\otimes\cdots\otimes1\otimes u_r,$$
  with $v\in V_j^+$, $u\in U_j^*$, $v_r\in V_1^+$, $u_r\in U_1^*$, and such that $\widetilde{M}_j(v\otimes u)= \sum_r\widetilde{M}_1(v_r\otimes u_r)$. We have two cases:
\begin{enumerate}
\item For $\cogrado(\widetilde{M}_1(x_0))\geq0$, we can assume that $\widetilde{M}_1(x_0)=-1$.   For all $x\in B_j$, we have  
  $$\widetilde{M}_1(-\widetilde{M}_j(x)*x_0)=-\widetilde{M}_j(x)*\widetilde{M}_1(x_0)=-\widetilde{M}_j(x)*-1=\widetilde{M}_j(x)\in\C[\![\E^\frac{1}{4}]\!].$$
 Since $B_j$ is a basis of $V_j^+\otimes_{D_j} U_j^*$, it follows that $\mathcal{R}$ is generated by:
  $$\{\overline{x}+\widetilde{M}_j(x)\overline{x_0}|x\in B_j, 1\leq j\leq t\}.$$
\item For $\cogrado(\widetilde{M}_1(x_0))<0$, we have $\widetilde{M}_1(x_0)^{-1}\in \C[\![\E^\frac{1}{4}]\!]$. Moreover, for all $x\in B_j$, $1\leq j\leq t$, we have $\widetilde{M}_j(x)\widetilde{M}_1(x_0)^{-1}\in \C[\![\E^\frac{1}{4}]\!]$ . 

As above, we can see that $\widetilde{M}_1(\widetilde{M}_j(x)\widetilde{M}_1(x_0)^{-1}*x_0)=\widetilde{M}_j(x)$. Thus, $\mathcal{R}$ is generated by:
  $$\{\overline{x}-\widetilde{M}_j(x)\widetilde{M}_1(x_0)^{-1}\overline{x_0}|x\in B_j, 1\leq j\leq t\}.$$
  \end{enumerate}
  Therefore, $\mathcal{R}$ is completely defined by $\{M_j\}_{1\leq j\leq t}$. Then, we can denote the canonical algebra $\mathcal{T}/\mathcal{R}$ as $\mathcal{C}(l,\{M_j\}_{1\leq j\leq t})$.
  \begin{example}\label{ex3.3}{
Let $M_1\coloneqq M_{(1)}$ and $M_2\coloneqq M_3^{(1)}(1)$ be simple regular representations and $l=(2,\,3)$. Then, the modulated quiver is represented in Figure \ref{quiver3.3} and the  generators of $\mathcal{R}$ are
$$\begin{array}{ccccc}
1\otimes1\otimes e_1+\E^\frac{1}{2}(1\otimes1),&\quad&\E^\frac{2}{3}\otimes1\otimes e_1+e^\frac{\pi i}{4}(1\otimes1),&\quad&\E^\frac{2}{3}\otimes1\otimes e_2-(1\otimes1),\\1\otimes1\otimes e_2,&&\E^\frac{1}{3}\otimes1\otimes e_1,&&\E^\frac{1}{3}\otimes1\otimes e_2.\end{array}$$
\begin{figure}[ht]
\begin{center}
\begin{tikzpicture}
[->,>=stealth',shorten >=1pt,auto,node distance=1.5cm,thick,main node/.style=]
  \node[main node] (1) {$k_2$};   
 \node[main node] (5) [below left of=1] {};
  \node[main node] (3) [left of=5] {$K$};
  \node[main node] (4) [below right of=3] {$k_3$};
  \node[main node] (6) [ right of=4] {};
  \node[main node] (12) [right  of=6]{$k_3$};   
\node[main node] (11) [above right  of=12]{$k$};   
\path[every node/.style={font=\sffamily\small}]     
(1) edge node   {$K^{\Si}$} (3)        
 (11) edge node         {$k_{2}$} (1)             
  (12) edge node         {$k_{3}$} (4)             
 (11) edge [bend right=10] node  [left]{$k_3$} (12)        
(11) edge [bend left=10] node     {$k_3$} (12)
 (4) edge node  {$k_{12}$} (3); 
\end{tikzpicture}
\end{center}
\caption{Modulated quiver of Example \ref{ex3.3}.}
\label{quiver3.3}
\end{figure}\\

It is easy to see that $\mathcal{C}((2,\,3),\{M_1,M_2\})$ is isomorphic to 
$$\left[\begin{array}{ccccc}K&K^\Si&k_{12}&k_{12}&K\\
0&k_2&0&0&k_2\\
0&0&k_3&k_3&k_3^2\\
0&0&0&k_3&k_3^2\\
0&0&0&0&k
\end{array}
\right]$$
with the relations:
\begin{eqnarray*}
E_{12}E_{25}&=&-E_{15},\\
E_{13}e_1E_{35}=E_{14}e_1E_{45}&=&-\E^\frac{1}{2}E_{12}E_{25},\\
\E^\frac{2}{3}E_{13}e_1E_{35}=\E^\frac{2}{3}E_{14}e_1E_{45}&=&-e^\frac{\pi i}{4}\E^\frac{3}{4}E_{12}E_{25},\\
\E^\frac{2}{3}E_{13}e_2E_{35}=\E^\frac{2}{3}E_{14}e_2E_{45}&=&E_{12}E_{25},\\
E_{13}e_2E_{35}=E_{14}e_2E_{45}&=&0,\\
\E^\frac{1}{3}E_{13}e_1E_{35}=\E^\frac{1}{3}E_{14}e_1E_{45}&=&0,\\
\E^\frac{1}{3}E_{13}e_2E_{35}=\E^\frac{1}{3}E_{14}e_2E_{45}&=&0.\\
\end{eqnarray*}
}
\end{example}
  \subsection{The algebra \texorpdfstring{$\widetilde{H}^{\tw}(l, P)$}{Htw(l,P)}}\label{3.4}
  Consider $t\geq 1$, $l\coloneqq (l_1,\cdots,l_t)\in\N_{\geq 2}^t$ and $P\coloneqq\{\rho_1,\cdots,\,\rho_t\}\subset\spec(k[x])\dot\cup\{1,\,2\}$. Let $\rho_j$ be an element in  $P$. Then, $\rho_j$ is in $\spec(k[x])$ or $\{1,\,2\}$. In both cases, there is a simple regular representation $M_j$.
  
  Now, we can define symmetrizable Cartan matrices $C_{(l,\,P)}\in\M_{m\times m}(\Z)$ with the symmetrizer, $D_{(l,\,P)}=\diag(c_1,\cdots,c_m)$, where $m=\sum_{j=1}^tl_j-(t-2)$, that satisfy the following conditions:
  
   There are $I_1,\cdots,I_t\subseteq\{1,\,2,\cdots,m\}$ such that
  \begin{enumerate}
  \item  $|I_j|=l_j+1$, $\bigcup_{j=1}^tI_j=\{1,\,2,\cdots,m\}$ and $I_i\cap I_j=\{1,\, m\}$ for $i\neq j$.
  \item For each $j\in\{1,\cdots,\, t\}$, if $M_j\cong M_{(1)},\,M_{(2)}$ or  is a $\widetilde{\mathsf{D}}_4$-homogeneous simple regular representation ($M_n^{(j)}(s)$), then $C_{I_j}= C_{(1\,2)},\,C_{(2\,4)}$ or $C_{(n)}$ in $\M_{(l_j+1)\times(l_j+1)}(\Z)$, respectively, with  
$$C_{(1\,2)}=\left[\begin{array}{cccccccc}2&-1&0& & & & &\\-2&2&-1&& & &&\\0&-1&2 &\cdot & & &&\\ & &\cdot & \cdot& \cdot& &&\\ & & &\cdot &\cdot & \cdot&&\\ && &&\cdot &\cdot &-1&0\\ && && &-1&2&-1\\ && && &0 &-2&2\\\end{array}\right]\mbox{ and }D_{(1\,2)}=\diag(4,2,\cdots,2,1),$$ 
$$C_{(2\,4)}=\left[\begin{array}{cccccccc}2&-2&0& & & & &\\-2&2&-1&& & &&\\0&-1&2 &\cdot & & &&\\ & &\cdot & \cdot& \cdot& &&\\ & & &\cdot &\cdot & \cdot&&\\ && &&\cdot &\cdot &-1&0\\ && && &-1&2&-1\\ && && &0 &-4&2\\\end{array}\right]\mbox{ and }D_{(2\,4)}=\diag(4,4,\cdots,4,1),$$
$$C_{(n)}=\left[\begin{array}{cccccccc}2&-n&0& & & & &\\-4&2&-1&& & &&\\0&-1&2 &\cdot & & &&\\ & &\cdot & \cdot& \cdot& &&\\ & & &\cdot &\cdot & \cdot&&\\ && &&\cdot &\cdot &-1&0\\ && && &-1&2&-2\\ && && &0 &-2n&2\\\end{array}\right]\mbox{ and }D_{(n)}=\diag(4,n,\cdots,n,1).$$
  (Recall that $C_J$ is the matrix obtained from $C$ by deleting rows and columns and indexing by $J$).
\end{enumerate}  
 Now, let $\mathcal{I}\coloneqq \{I_1,\cdots,\,I_t\}$ be as above. We can see that $I_j=\{j_1,\,j_2,\cdots,\,j_{l_j+1}\}$, with $j_1=1$, $j_{l_j+1}=m$ and $j_i<j_{i+1}$ for all $i\in\{1,\cdots,\,l_j\}$ and $j\in\{1,\cdots,\,t\}$. We define $\Omega_{\mathcal{I}}$ as
  $$\Omega_{\mathcal{I}}=\{(j_i,\,j_{i+1})|1\leq j\leq t,\,1\leq i\leq l_j\}.$$
 
  \begin{remark} 
 { Let $C_{(l,\,P)}$, $D_{(l,\,P)}$, $\mathcal{I}$ and $\Omega_{\mathcal{I}}$ be as above. As in \cite{GLS16}, we can define the following relations 
 \begin{eqnarray*}
\begin{array}{ll} \epsilon^2_{a}\A_{ab}=-\A_{ab}\epsilon_{b}& \mbox{If $a=j_1$ and $C_{I_j}=C_{(1,\,2)}$,}\\ &\\\epsilon_{a}\A_{ab}^{(g)}=(-i)^g\A_{ab}^{(g)}\epsilon_{b}& \mbox{if $a=j_1$ and $C_{I_j}=C_{(2,\,4)}$,}\\&\\  \epsilon^{f_{ba}}_{a}\A_{ab}^{(g)}=\A_{ab}^{(g)}\epsilon_{b}^{f_{ab}}&\mbox{otherwise, (where $f_{ab}=-c_{ab}/g_{ab}$),}\end{array}
\end{eqnarray*}
with  $g_{ab}\coloneqq\mcd(-c_{ab},-c_{ba})$ and $1\leq g\leq g_{ab}$.

Of these relations we can see, following  Table \ref{table4}, that every path from $m$ to $1$ is $\epsilon_{j_1}^r p^{(j)}_{g\,l\,i}$, $r\in\Z_{\geq0}$, where $p^{(j)}_{g\,l\,i}$ is equal to $\A_{j_1\,j_2}^{(g)}\epsilon_{j_2}^l\A_{j_2\,j_3}\cdots\A_{j_{l_j-1}\,j_{l_j}}\A^{(i)}_{j_{l_j}\,j_{l_j+1}}$. 
 }\end{remark}  
 \begin{table}[ht]
\centering
\scalebox{0.85}{
\begin{tabular}{|c|c|c|c|c|}\hline
$C_{I_j}$& $j$-th path&$g$&$l$&$i$ \\ \hline
$C_{(1\,2)}$&\begin{tikzpicture}
[->,>=stealth',shorten >=1pt,auto,node distance=1.3cm,thick,main node/.style=]
  \node[main node] (1) {${j_1}$};
  \node[main node] (2) [right of=1]{$j_2$};
  \node[main node] (3) [right of=2]{$\cdots$};   
  \node[main node] (4) [right of=3] {$j_{l_j}$};  
  \node[main node] (5) [right of=4]{$j_{l_j+1}$};  
\path[every node/.style={font=\sffamily\small}]     
(1) edge [loop above] node              {} (1) 
(2) edge [loop above] node              {} (2) 
(2) edge  node  {} (1)        
(3) edge node                              {} (2)  
(4) edge [loop above] node              {} (4)
(4) edge node {}(3)
(5) edge [loop above] node              {} (5)
(5) edge node {}(4)
;
\end{tikzpicture}&$1$&$0$&$1$
\\ \hline
$C_{(2\,4)}$&\begin{tikzpicture}
[->,>=stealth',shorten >=1pt,auto,node distance=1.3cm,thick,main node/.style=]
 \node[main node] (1) {${j_1}$};
  \node[main node] (2) [right of=1]{$j_2$};
  \node[main node] (3) [right of=2]{$\cdots$};   
  \node[main node] (4) [right of=3] {$j_{l_j}$};  
  \node[main node] (5) [right of=4]{$j_{l_j+1}$};
\path[every node/.style={font=\sffamily\small}]     
(1) edge [loop above] node              {} (1) 
(2) edge [loop above] node              {} (2) 
(2) edge [bend left=20] node  {} (1)        
(2) edge [bend right=20] node              {} (1)
(3) edge node                              {} (2)  
(4) edge [loop above] node              {} (4)
(4) edge node {}(3)
(5) edge [loop above] node              {} (5)
(5) edge node  {} (4)        
;
\end{tikzpicture}&$1\leq g\leq 2$&$0$&$1$\\ \hline
$C_{(n)}$ and $n\equiv1(\modu2)$&\begin{tikzpicture}
[->,>=stealth',shorten >=1pt,auto,node distance=1.3cm,thick,main node/.style=]
  \node[main node] (1) {${j_1}$};
  \node[main node] (2) [right of=1]{$j_2$};
  \node[main node] (3) [right of=2]{$\cdots$};   
  \node[main node] (4) [right of=3] {$j_{l_j}$};  
  \node[main node] (5) [right of=4]{$j_{l_j+1}$};
\path[every node/.style={font=\sffamily\small}]     
(1) edge [loop above] node              {} (1) 
(2) edge [loop above] node              {} (2) 
(2) edge  node  {} (1)        
(3) edge node                              {} (2)  
(4) edge [loop above] node              {} (4)
(4) edge node {}(3)
(5) edge [loop above] node              {} (5)
(5) edge [bend right=20] node  {} (4)        
(5) edge [bend left=20] node              {} (4)
;
\end{tikzpicture}&$1\leq g\leq n$&$0$&$1\leq i\leq 2$\\ \hline
$C_{(n)}$ and $n\equiv2(\modu4)$&\begin{tikzpicture}
[->,>=stealth',shorten >=1pt,auto,node distance=1.3cm,thick,main node/.style=]
   \node[main node] (1) {${j_1}$};
  \node[main node] (2) [right of=1]{$j_2$};
  \node[main node] (3) [right of=2]{$\cdots$};   
  \node[main node] (4) [right of=3] {$j_{l_j}$};  
  \node[main node] (5) [right of=4]{$j_{l_j+1}$};
\path[every node/.style={font=\sffamily\small}]     
(1) edge [loop above] node              {} (1) 
(2) edge [loop above] node              {} (2) 
(2) edge [bend left=20] node  {} (1)        
(2) edge [bend right=20] node              {} (1)
(3) edge node                              {} (2)  
(4) edge [loop above] node              {} (4)
(4) edge node {}(3)
(5) edge [loop above] node              {} (5)
(5) edge [bend right=20] node  {} (4)        
(5) edge [bend left=20] node              {} (4)
;
\end{tikzpicture}&$1\leq g\leq \frac{n}{2}$&$0\leq l\leq 1$&$1\leq i\leq 2$\\ \hline
$C_{(n)}$ and $n\equiv0(\modu4)$&\begin{tikzpicture}
[->,>=stealth',shorten >=1pt,auto,node distance=1.3cm,thick,main node/.style=]
  \node[main node] (1) {${j_1}$};
  \node[main node] (2) [right of=1]{$j_2$};
  \node[main node] (3) [right of=2]{$\cdots$};   
  \node[main node] (4) [right of=3] {$j_{l_j}$};  
  \node[main node] (5) [right of=4]{$j_{l_j+1}$};
\path[every node/.style={font=\sffamily\small}]     
(1) edge [loop above] node              {} (1) 
(2) edge [loop above] node              {} (2) 
(2) edge [bend left=55] node  {} (1)        
(2) edge [bend right=55] node              {} (1)
(2) edge [bend left=20] node  {} (1)        
(2) edge [bend right=20] node              {} (1)
(3) edge node                              {} (2)  
(4) edge [loop above] node              {} (4)
(4) edge node {}(3)
(5) edge [loop above] node              {} (5)
(5) edge [bend right=20] node  {} (4)        
(5) edge [bend left=20] node              {} (4)
;
\end{tikzpicture} &$1\leq g\leq \frac{n}{4}$&$0\leq l\leq 3$&$1\leq i\leq 2$\\ \hline
\end{tabular}}
\caption{Let $\A_{j_i\,j_{i+1}}\colon j_i\lra j_{i+1}$ be an arrow  for $1\leq i\leq l_j$. We can see that each path from $m$ to $1$ is $\epsilon_{j_1}^r p^{(j)}_{g\,l\,i}$, $r\in\Z_{\geq0}$, where $p^{(j)}_{g\,l\,i}$ is equal to $\A_{j_1\,j_2}^{(g)}\epsilon_{j_2}^l\A_{j_2\,j_3}\cdots\A_{j_{l_j-1}\,j_{l_j}}\A^{(i)}_{j_{l_j}\,j_{l_j+1}}$.}
\label{table4}
\end{table}
 We have a map $h\colon\C[\![\E^\frac{1}{4}]\!]\lra\C[\![\epsilon_1]\!]$ defined by $\E^\frac{1}{4}\mapsto \epsilon_1$. Without loss of generality, we can  assume that there is a $\B'_{g_0+1\,l_0}\otimes e_{i_0}\in B_1$ such that 
 $$\cogrado(\widetilde{M}_1(\B'_{g_0+1\,l_0}\otimes e_{i_0}))=\min\{\cogrado(\widetilde{M}_j(x))|x\in B_j,\,1\leq j\leq t\}.$$
  Let $\mathcal{P}$ be an ideal generated by:
\begin{enumerate}
\item The set $\{p^{(1)}_{g_0\,l_0\,i_0}+h(\widetilde{M}_j(\B'_{g-1\,l}\otimes e_i))p^{(j)}_{g\,l\,i}|\B'_{g-1\,l}\otimes e_i\in B_j, 1\leq j\leq t\}$, when $\cogrado(\widetilde{M}_1(\B'_{g_0+1\,l_0}\otimes e_{i_0})))\geq0$.
  
\item The set $$\{p^{(1)}_{g_0\,l_0\,i_0}-h(\widetilde{M}_j(\B'_{g-1\,l}\otimes e_i)\widetilde{M}^{-1}_j(\B'_{g_0+1\,l_0}\otimes e_{i_0}))p^{(j)}_{g\,l\,i}|\B'_{g-1\,l}\otimes e_i\in B_j, 1\leq j\leq t\},$$ when $\cogrado(\widetilde{M}_1(\B'_{g_0+1\,l_0}\otimes e_{i_0})))<0$. 

\end{enumerate}

 Now, for these $C_{(l,\,P)}$, $D_{(l,\,P)}$, $\mathcal{I}$ and $\Omega_{\mathcal{I}}$, we want to define  an algebra $H$ as in \cite{GLS16},  but in our case, we have to consider, in some cases, twisted bimodules to produce the relations imposed by the adjoint maps.  For each $a\in \{1,\,2,\cdots,m\}$, we set $H_a\coloneqq \C[\epsilon_a]/(\epsilon_a^{c_a})$ as a truncated polynomial ring.  For $(a,b)\in\Omega_{\mathcal{I}}$, we define the cyclic $H_a$-$H_b$-bimodule $_aH^{\tw}_b$ as follows:
 \begin{eqnarray*}
_aH_b^{\tw}=\left\{\begin{array}{ll} (\A_{ab})/(\epsilon^2_{a}\A_{ab}+\A_{ab}\epsilon_{b})& \mbox{If $a=j_1$ and $C_{I_j}=C_{(1,\,2)}$,}\\ &\\(\A_{ab}^{(1)}, \A_{ab}^{(2)})/(\epsilon_{a}\A_{ab}^{(g)}-(-i)^g\A_{ab}^{(g)}\epsilon_{b})& \mbox{if $a=j_1$ and $C_{I_j}=C_{(2,\,4)}$,}\\&\\  (\A_{ab}^{(g)}|1\leq g\leq g_{ab})/(\epsilon^{f_{ba}}_{a}\A_{ab}^{(g)}-\A_{ab}^{(g)}\epsilon_{b}^{f_{ab}})&\mbox{otherwise.}\end{array}\right.
\end{eqnarray*} 
 Then, we define 
 $$S\coloneqq \prod_{1\leq a\leq m}H_a \,\,\,\,\, \mbox{ and }\,\,\,\,\,B^{\tw}\coloneqq \bigoplus_{(a,b)\in\Omega_{\mathcal{I}}}\,_aH_b^{\tw}.$$
 Note that $B^{\tw}$ is naturally an $S$-$S$-bimodule. Thus, we can define the tensor algebra
 $$H^{\tw}(l,\,P)\coloneqq  H^{\tw}_\C(C_{(l,\,P)},D_{(l,\,P)},\Omega_{\mathcal{I}})\coloneqq  T_S(B^{\tw})/\mathcal{P}.$$

   For each $1\leq a\leq m$, we take $\E_a=\E^\frac{1}{c_a}$. In particular, the field extension $k\subseteq k_{c_a}$ has degree $c_a$.  We set $\widetilde{S}\coloneqq \prod_{1\leq a\leq m} k_{c_a}$ and for  all $(a,b)\in\Omega_{\mathcal{I}}$:
   $$_{a}\widetilde{H}_{b}^{\tw}\coloneqq \left\{\begin{array}{ll}K^{\Si} & \mbox{If }a=j_1\mbox{ and } C_{I_j}= C_{(1,\,2)},\\K^\Si\times K^{\Si^2} & \mbox{if }a=j_1\mbox{ and }\, C_{I_j}= C_{(2,\,4)}\mbox{,}\\ k_{l_{ab}}^{g_{ab}}&\mbox{ otherwise,}\end{array}\right.$$

  with $l_{ab}=\mcm(c_{a},c_{b})$. In particular, $_{a}\widetilde{H}_{b}^{\tw}$ is a $k_{c_{a}}$-$k_{c_{b}}$-bimodule for $(a,\,b)\in \Omega_{\mathcal{I}}$.   Finally, we define the  tensor algebra
  $$\widetilde{H}^{\tw}(l, P)\coloneqq \widetilde{H}^{\tw}_k(C,D,\Omega_{\mathcal{I}})\coloneqq  T_{\widetilde{S}}\left(\bigoplus_{(a,b)\in\Omega_{\mathcal{I}}}\,_a\widetilde{H}_b\right)/\mathcal{P}',$$
  where $\mathcal{P}'$ is generated by:
  \begin{enumerate}
\item The set
  $$\{\overline{\B'_{g_0+1\,l_0}\otimes e_{i_0}}+\widetilde{M}_j(\B'_{g-1\,l}\otimes e_i)\overline{\B'_{g-1\,l}\otimes e_i}|\B'_{g\,l}\otimes e_i\in B_j, 1\leq j\leq t\},$$
  when  $\cogrado(\widetilde{M}_1(\B'_{g_0+1\,l_0}\otimes e_{i_0})))\geq0$.
\item The set 
$$\{\overline{\B'_{g_0+1\,l_0}\otimes e_{i_0}}-\widetilde{M}_j(\B'_{g-1\,l}\otimes e_i)\widetilde{M}^{-1}_j(\B'_{g_0+1\,l_0}\otimes e_{i_0})\overline{\B'_{g-1\,l}\otimes e_i}|\B'_{g\,l}\otimes e_i\in B_j, 1\leq j\leq t\},$$
when  $\cogrado(\widetilde{M}_1(\B'_{g_0+1\,l_0}\otimes e_{i_0})))<0$.
\end{enumerate}
Recall that if $v\otimes u\in B_j$, then $\overline{v\otimes u}=v\otimes\underbrace{1\otimes\cdots\otimes1}_{l_j-2}\otimes u$.
 \begin{teo} Let $l\coloneqq (l_1,\cdots,\,l_t)\in\N_{\geq2}^t$ and $M_1,\,M_2,\cdots,M_t$ be simple regular representations that are pairwise non-isomorphic. Then, there is $P:=\{\rho_1,\cdots,\rho_t\}\subset \spec[k]\dot{\cup}\{1,\,2\}$ such that
 $$\widetilde{H}^{\tw}(l,\,P)=\mathcal{C}(l,\{M_j\}_{1\leq j\leq t}).$$
 \end{teo}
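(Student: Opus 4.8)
The plan is to exhibit an explicit dictionary between the modulated quiver with relations underlying $\widetilde{H}^{\tw}(l,P)$ and Ringel's species-with-relations $(\mathcal S,\mathcal R)$ underlying $\mathcal C(l,\{M_j\})$, and then to note that the asserted equality is the isomorphism of tensor algebras induced by this dictionary (as usual ``$=$'' should be read as ``$\cong$''). In particular no new structural idea is needed: the whole content is that the combinatorial and bimodule data packaged into $\widetilde{H}^{\tw}$ were designed to reproduce Ringel's construction.

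First I would pin down the parameter $P$. By Remark~\ref{remark2.3} each $M_j$ is isomorphic to exactly one of $M_{(1)}$, $M_{(2)}$, or a $\widetilde{\mathsf D}_4$-homogeneous $M_{n_j}^{(r_j)}(s_j)$; accordingly set $\rho_j:=1$, $\rho_j:=2$, or $\rho_j:=(p_j)$ for the monic irreducible $p_j\in k[x]$ attached to $M_j$ by the remark after Theorem~\ref{teorema1.5}. Since the $M_j$ are pairwise non-isomorphic, Theorem~\ref{lema1} together with that remark forces the $\rho_j$ to be pairwise distinct closed points, so $P:=\{\rho_1,\dots,\rho_t\}$ has the required form and, run through the construction of Subsection~\ref{3.4}, returns precisely $M_1,\dots,M_t$.

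Next I would build the vertex bijection $K\mapsto 1$, $k\mapsto m$, and the $l_j-1$ vertices of the $j$-th arm of Figure~\ref{figure1} onto $I_j\setminus\{1,m\}$. Then, case by case on $C_{I_j}\in\{C_{(1\,2)},C_{(2\,4)},C_{(n_j)}\}$, I would read off from $D_{(1\,2)},D_{(2\,4)},D_{(n)}$ that $c_1=4$, $c_m=1$, and $c_a=m_j$ for $a\in I_j\setminus\{1,m\}$, so that the vertex fields $k_{c_a}$ are $K$, $k$, and $\End_\Lambda(M_j)\cong k_{m_j}$ (Theorem~\ref{lema1}). For the arrows I would compute $g_{ab}=\mcd(-c_{ab},-c_{ba})$ and $l_{ab}=\mcm(c_a,c_b)$ from the three explicit Cartan matrices and check that $_a\widetilde H_b^{\tw}$ reproduces Table~\ref{table2}: the interior arm arrows give the trivial bimodule $k_{m_j}$; the foot arrow gives $k_{m_j}$, $K$, or $k_{m_j}^2$, matching $U_j^*$ for $M_{(1)},M_{(2)},M_{n_j}^{(r_j)}(s_j)$; and the head arrow $(1,j_2)$ gives $K^\Si$, $K^\Si\times K^{\Si^2}$, or $k_{\mcm(4,n_j)}^{\mcd(4,n_j)}$, matching $V_j^+$. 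For the two twisted heads I would additionally verify that the bimodule relations $\epsilon_1^2\A_{1j_2}=-\A_{1j_2}\epsilon_{j_2}$, resp. $\epsilon_1\A^{(g)}_{1j_2}=(-i)^g\A^{(g)}_{1j_2}\epsilon_{j_2}$, encode exactly the $K$-$k_{m_j}$-bimodule structure of $V_j^+$ found in Proposition~\ref{5.1} (namely $\B'_0*(a+b\E^{\frac12})=(a-b\E^{\frac12})\B'_0$, resp. $\B'_g*\E^{\frac14}=i^{g+1}\E^{\frac14}*\B'_g$).

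Finally I would match the relations. Using Table~\ref{table4}, every path from $m$ to $1$ is a $k_{c_1}$-multiple of some $p^{(j)}_{g\,l\,i}=\A^{(g)}_{j_1j_2}\epsilon^l_{j_2}\A_{j_2j_3}\cdots\A^{(i)}_{j_{l_j}j_{l_j+1}}$, and collapsing the middle tensor factors over the vertex fields $k_{c_{j_i}}$ identifies this with the basis vector $\overline{\B'_{g-1,l}\otimes e_i}$ of $V_j^+\otimes_{D_j}U_j^*$ inside $\mathcal T$ (Table~\ref{table2}); here I would use the surjectivity of each $\widetilde M_j$ (Section~\ref{sec4.1}, a by-product of Proposition~\ref{5.1}) and the compatible ``without loss of generality'' choice of the branch $1$ realising $\min_j\min_{x\in B_j}\cogrado(\widetilde M_j(x))$. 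Under this identification the generators of $\mathcal P'$ listed in Subsection~\ref{3.4} are term-by-term the generators of $\mathcal R$ written in Subsection~\ref{3.3}, so the two ideals correspond and the desired equality follows. The main obstacle I anticipate is precisely this last compatibility check: one must confirm that the indexing in the adjoint-map formulas of Table~\ref{table3}, in the path enumeration of Table~\ref{table4}, and in the bimodule presentations $_a\widetilde H_b^{\tw}$ is normalised so that the Laurent series $\widetilde M_j(\B'_{g-1,l}\otimes e_i)$ occurring in $\mathcal R$ and in $\mathcal P'$ literally coincide — a sign or a power of $\E^{\frac14}$ could slip, and this has to be verified in each of the five homogeneous subcases and for $M_{(1)},M_{(2)}$ separately.
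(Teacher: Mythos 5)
Your proposal is correct and follows essentially the same route as the paper: identify each $M_j$ via Theorem~\ref{lema1} with $M_{(1)}$, $M_{(2)}$, or some $M_{n_j}^{(r_j)}(s_j)$, and correspondingly set $\rho_j$ to be $1$, $2$, or the monic irreducible generating the prime attached to $M_j$, so that the construction of Subsection~\ref{3.4} recovers the same list of representations. The paper's proof stops after exhibiting this $P$ (leaving the structural match implicit, since $\widetilde H^{\tw}(l,P)$ was built in Subsection~\ref{3.4} precisely to reproduce Ringel's species-with-relations), whereas you spell out the vertex/arrow/relation dictionary explicitly; that extra verification is accurate and in line with what the paper intends.
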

 \begin{proof}
 From Theorem \ref{lema1}, we have, for $1\leq j\leq t$, that $M_j\cong M_{(1)}$, $M_{(2)}$ or $M_{n_j}^{(i)}(s_j)$ for some $n_j\in\Z_{>0}$ and $s_j\in k_{n_j}$. Then, we can choose $\rho_j\in P$, as follows:
 $$\rho_j=\left\{\begin{array}{cl}1&\mbox{ If }M_j\cong M_{(1)},\\2&\mbox{ if }M_j\cong M_{(2)},\\\prod_{l=0}^{n_j-1}(x-\Si_{n_j}^l(\phi_n^{(i)}(s_j)))&\mbox{ If }M_j\cong M_{n_j}^{(i)}(s_j).\end{array}\right.$$
 
This is the $P$ we are looking for.
 \end{proof}
\addcontentsline{toc}{section}{Acknowledgements} \section*{Acknowledgements}
The first author was supported by CONACyT's grant 229255. The second author gratefully acknowledges the support he received from  CONACyT-575165 scholarship. Both authors would like to thank the reviewers for their valuable comments and suggestions, which significantly contributed to improving the quality of this article.
\section*{Statements and Declarations}
{\bf Ethical Approval:} not applicable.

{\bf Funding:} The first author was supported by CONACyT's grant 229255. The second author gratefully acknowledges the support he received from  CONACyT-575165 scholarship. 

{\bf Availability of data and materials:} not applicable.

{\bf Conflict of interest statement:}  Not applicable.  
\addcontentsline{toc}{section}{References} %\section*{References}

\end{document}